\newcommand{\ol}{\overline}
\newcommand{\tb}{\textbf}
\newcommand{\tcb}{\textcolor{blue}}
\newcommand{\lam}{\lambda}
\newcommand{\til}{\widetilde}
\newcommand{\Lam}{\Lambda}
\newcommand{\Graphs}{{\textsf{\textup{Graphs}}}}
\newcommand{\WGraphs}{{\textsf{\textup{WGraphs}}}}
\newcommand{\bs}{\backslash}
\newcommand{\tn}{\textnormal}
\newcommand{\K}{\mathbb{K}}
\newcommand{\mc}{\mathcal}
\newcommand{\se}{\subseteq}
\newcommand{\eps}{\varepsilon}
\newcommand{\mWGraphs}{\mathfrak{m}{\mathsf{WGraphs}}}
\newtheorem{theorem}{Theorem}
\numberwithin{theorem}{section}
\newtheorem{prop}[theorem]{Proposition}
\newtheorem{lemma}[theorem]{Lemma}
\theoremstyle{definition}
\newtheorem{definition}[theorem]{Definition}
\newtheorem{example}[theorem]{Example}
\newtheorem{remark}[theorem]{Remark}
\title{Hopf algebra maps taking chromatic symmetric functions to their graph complements}
\author{Shao Yuan Lin \\ \href{mailto:syl54@cantab.ac.uk}{syl54@cantab.ac.uk} \and Laura Pierson \\ \href{mailto:lcpierson73@gmail.com}{lcpierson73@gmail.com}}
\begin{document}

\maketitle

\begin{abstract}
    Cho and van Willigenburg \cite{cho2015chromatic} and Alinaeifard, Wang, and van Willgenburg \cite{aliniaeifard2021extended} introduce multiplicative \emph{\tb{\tcb{chromatic bases}}} for the ring $\Lam$ of symmetric functions, consisting of the \emph{\tb{\tcb{chromatic symmetric functions (CSFs)}}} of a sequence of connected graphs $G_1,G_2,\dots$ such that $G_n$ has total weight $n$, together with the CSFs of their disjoint unions. Tsujie \cite{tsujie2018chromatic} introduces an alternative ring structure $\til{\Lam}$ on the vector space $\Lam$ that makes CSFs multiply over joins instead of over disjoint unions. The $\til{m}_\lam$ basis, consisting of all CSFs of weighted cliques, is a multiplicative basis for $\til{\Lam}$, as is the $r_\lam$ basis of complete multipartite graphs studied in \cite{penaguiao2020kernel} and \cite{crew2021complete}. We show that one can get more of these \emph{\tb{\tcb{cochromatic bases}}} (where the starting graphs are combined by joins instead of by disjoint unions, hence forming a multiplicative basis for $\til{\Lam}$ instead of for $\Lam$) if and only if the starting graphs are edgeless. We also show that $\til{\Lam}$ is a Hopf algebra with the same coproduct as $\Lam$, and that many of the chromatic bases for $\Lam$ generated by cliques can be taken to their corresponding cochromatic bases via Hopf algebra isomorphisms $\Lam \to \til{\Lam}.$ We also show that there is a single Hopf algebra morphism $\Lam \to \til{\Lam}$ (though not an isomorphism) taking the CSFs of all unweighted triangle-free graphs to the CSFs of their complements, and we give several more conditions and examples for when one can or cannot find Hopf algebra maps from $\Lam$ to $\til{\Lam}$ taking the CSFs of certain graphs to the CSFs of their complements. Finally, we show that $K$-analogues of many of the above statements also hold if one instead uses the \emph{\tb{\tcb{Kromatic symmetric function (KSF)}}} of Crew, Pechenik, and Spirkl \cite{crew2023kromatic} in place of the CSF.
\end{abstract}

\section{Introduction}

Several of the classic vector space bases for the ring of symmetric functions $\Lambda$ (namely, the $e_\lambda$'s, $h_\lambda$'s, and $p_\lambda$'s) are \emph{\tb{\tcb{multiplicative}}} in the sense that in each case, $b_\lambda = b_{\lambda_1}\dots b_{\lambda_\ell}$ for any partition $\lambda = \lambda_1 \dots \lambda_\ell$. Equivalently, each of these bases consists of all possible products of an algebraically independent sequence of generators $b_1,b_2,\dots$ (so in each case, the sequence $b_1,b_2,\dots$ generates $\Lambda$ as an algebra). The \emph{\tb{\tcb{elementary symmetric functions}}} $e_\lambda$ and the \emph{\tb{\tcb{power sum symmetric functions}}} $p_\lambda$ can also be thought of as coming from \emph{\tb{\tcb{chromatic symmetric functions (CSFs)}}}, in the sense that $p_n$ is the CSF of a single weight $n$ vertex, and $n!e_n=X_{K_n}$ is the CSF of a clique consisting of $n$ weight 1 vertices. Multiplying CSFs corresponds to taking disjoint unions of the associated graphs, so $p_\lambda$ is the CSF of a weighted edgeless graph on $\ell$ vertices with vertex weights $\lambda_1,\dots,\lambda_\ell$, and $\lam!e_\lambda$ is the CSF of a disjoint union of $\ell$ cliques of sizes $\lambda_1,\dots,\lambda_\ell$, where $\lam! := \lam_1!\dots \lam_\ell!$.

Cho and van Willigeburg \cite{cho2015chromatic} and Alinaeifard, Wang, and van Willigenburg \cite{aliniaeifard2021extended} show that $\Lambda$ has many more multiplicative bases coming from CSFs, which they call \emph{\tb{\tcb{chromatic bases}}}. Namely, for any sequence of connected weighted graphs $G_1,G_2,\dots$ such that $G_n$ has total weight $n$, they show that the CSFs $X_{G_1},X_{G_2},\dots$ generate $\Lambda$ as an algebra, meaning the CSFs $X_{G_\lambda}$ form a multiplicative basis for $\Lambda,$ where $G_\lambda := G_{\lambda_1}\sqcup \dots \sqcup G_{\lambda_\ell}.$ 

The monomial basis $m_\lambda$ is another classic basis for $\Lambda$ that is not multiplicative but still plays an important role in the theory of symmetric functions, and specifically of CSFs. In the CSF context, the \emph{\tb{\tcb{augmented monomial symmetric functions}}} $\til{m}_\lambda$ (which are scalar multiples of the $m_\lambda$'s) are the CSFs of weighted complete graphs where the vertex weights are the parts of $\lam$. Also, the coefficient $[\til{m}_\lambda]X_G$ in the $\til{m}$-expansion of any CSF is equal to $|\tn{St}_\lam(G)|,$ the number of ways to partition the vertices into stable sets whose weights are the parts of $\lam.$ 

To make it easier to talk about ways to combine these stable set partitions (in the context of proving that the CSF distinguishes certain graphs), Tsujie \cite{tsujie2018chromatic} defines an alternative ring structure $\widetilde{\Lambda}$ on the vector space of symmetric functions such that the $\til{m}_\lam$'s become a multiplicative basis instead of the $e_\lam$'s, $h_\lam$'s, or $p_\lam$'s being multiplicative. His multiplication $\odot$ is given by $\til{m}_\lam\odot \til{m}_\mu := \til{m}_{\lam \sqcup \mu}$, where $\lam\sqcup \mu$ is the partition formed by taking all part of $\lam$ together with all parts of $\mu$ (so for instance, $322 \sqcup 421 = 432221$). Tusjie shows that $X_{G\odot H} = X_G\odot X_H$ under this multiplication, where $G\odot H$ is the \emph{\tb{\tcb{join}}} formed by connecting all vertices of $G$ to all vertices of $H$. 

Penaguiao \cite{penaguiao2020kernel} introduces a \emph{\tb{\tcb{complete multipartite basis}}} consisting of the CSFs $r_\lambda$ of unweighted complete multipartite graphs whose part sizes are the parts of $\lam$, in the context of showing that the triangular modular relation independently discovered by Guay-Paquet \cite{guay2013modular} and Orellana and Scott \cite{orellana2014graphs} generate the kernel of the map from the algebra of unweighted graphs to $\Lambda$. The $r_\lam$-basis was further studied by Crew and Spirkl in \cite{crew2021complete}, where they give transition formulas between the $r_\lam$ and $\til{m}_\lam$ bases, which they relate to the transition formulas between the $e_\lam$ and $p_\lam$ bases. They note that these transition formulas seem interesting in the context of CSFs of graph complements, since the $\til{m}_\lam$'s are the CSFs of weighted complete graphs, the $p_\lam$'s are the CSFs of their complements (weighted edgeless graphs), the $\lam!e_\lam$'s are the CSFs of disjoint unions of cliques, and the $r_\lam$'s are the CSFs of their complements (complete multipartite graphs).

\bigskip

A natural question given all of this is thus whether there are more of these \emph{\tb{\tcb{cochromatic bases}}} like the $\til{m}_\lam$'s and the $r_\lam$'s that are formed by taking the CSFs of the complements of all the $G_\lam$'s from one of the chromatic bases, or equivalently, by taking the CSFs of some sequence of graphs $\ol{G}_1,\ol{G}_2,\dots$ together with the CSFs of all joins $\ol{G_{\lam_1}}\odot \dots \odot \ol{G_{\lam_\ell}}$ of combinations of these graphs. While the chromatic bases are multiplicative bases of $\Lambda$, the cochromatic bases are instead multiplicative bases of $\til{\Lambda}.$ In \S\ref{sec:cochromatic}, we show that CSFs of the above type form a basis if and only if the $G_n$'s are weighted complete graphs (whereas to form a chromatic basis, they only need to be connected):

\begin{prop}\label{prop:cochromatic}
    For a sequence of graphs $G_1,G_2,\dots,$ the set of CSFs of the form $X_{\ol{G_\lam}} = X_{\ol{G_{\lam_1}}\odot \dots \odot \ol{G_{\lam_\ell}}}$ form a multiplicative basis for $\til{\Lambda}$ if and only if $G_n$ is a weighted clique of total weight $n$ for each $n$.
\end{prop}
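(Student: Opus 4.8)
The plan is to reduce the problem to a standard indecomposability criterion for free polynomial generators of a connected graded algebra, and then do one short combinatorial computation. Since the $\til m_\lam$ form a multiplicative basis of $\til\Lam$, the algebra $(\til\Lam,\odot)$ is the polynomial algebra freely generated by $\til m_{(1)},\til m_{(2)},\dots$, with $\til m_{(n)}$ in degree $n$; and from $\til m_\alpha\odot\til m_\beta=\til m_{\alpha\sqcup\beta}$ one sees that the degree-$n$ part of $\til\Lam_+\odot\til\Lam_+$ is exactly $\K$-spanned by $\{\til m_\rho:\rho\vdash n,\ \ell(\rho)\ge 2\}$. Hence that subspace has codimension one in $\til\Lam_n$, complemented by $\K\til m_{(n)}$, so a homogeneous element $b\in\til\Lam_n$ is indecomposable if and only if $[\til m_{(n)}]b\ne 0$. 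By the graded Nakayama lemma (or a direct induction on degree), a sequence of homogeneous elements $b_1,b_2,\dots$ with $b_n$ in degree $n$ is a system of free polynomial generators of $\til\Lam$ precisely when every $b_n$ is indecomposable, i.e. when $[\til m_{(n)}]b_n\ne 0$ for all $n$ (algebraic independence being automatic once they generate, by comparing Hilbert series).

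I will apply this with $b_n:=X_{\ol{G_n}}$; for these to be homogeneous of degree $n$ we need $G_n$ to have total weight $n$, which is the source of the ``total weight $n$'' clause and which I will take as the standing hypothesis (more generally, a basis indexed by partitions forces the multiset of total weights of the $G_n$ to be $\{1,2,3,\dots\}$, reducing to this case after relabelling). Because the complement of a disjoint union is the join of the complements, $\ol{G_\lam}=\ol{G_{\lam_1}}\odot\dots\odot\ol{G_{\lam_\ell}}$, and Tsujie's identity $X_{H\odot H'}=X_H\odot X_{H'}$ then gives $X_{\ol{G_\lam}}=b_{\lam_1}\odot\dots\odot b_{\lam_\ell}$. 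Thus the set $\{X_{\ol{G_\lam}}\}$ is automatically ``multiplicative'' in the required sense, and it is a multiplicative basis of $\til\Lam$ if and only if $b_1,b_2,\dots$ freely generate, which by the previous paragraph holds iff $[\til m_{(n)}]X_{\ol{G_n}}\ne 0$ for every $n$.

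It remains to evaluate this coefficient, and here the combinatorics is immediate: by the identity $[\til m_\mu]X_H=|\tn{St}_\mu(H)|$, the coefficient $[\til m_{(n)}]X_{\ol{G_n}}$ counts partitions of $V(\ol{G_n})$ into stable sets whose weights are the parts of $(n)$, i.e. into a single stable set of weight $n$. Since $\ol{G_n}$ has total weight $n$, this forces the block to be all of $V(\ol{G_n})$, which is stable iff $\ol{G_n}$ is edgeless iff $G_n$ is a complete graph; so $[\til m_{(n)}]X_{\ol{G_n}}$ is $1$ if $G_n$ is a weighted clique and $0$ otherwise. Combining, $\{X_{\ol{G_\lam}}\}$ is a multiplicative basis of $\til\Lam$ exactly when every $G_n$ is a weighted clique of total weight $n$.

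The only nonroutine ingredient is the indecomposability/Nakayama criterion of the first paragraph, so that is where I would be most careful; everything after it is a short computation. If one prefers to avoid invoking Nakayama, each direction can be done by hand. For ``only if'': if some $G_n$ is not complete, then every basis element $X_{\ol{G_\mu}}$ with $\mu\vdash n$ has $[\til m_{(n)}]X_{\ol{G_\mu}}=0$ --- for $\ell(\mu)\ge 2$ because $X_{\ol{G_\mu}}$ is a $\odot$-product of elements of strictly smaller degree (hence a $\K$-combination of $\til m_\rho$ with $\ell(\rho)\ge 2$), and for $\mu=(n)$ by the computation above --- so these elements all lie in the kernel of the nonzero functional $f\mapsto[\til m_{(n)}]f$ on $\til\Lam_n$ and cannot span it. For ``if'': when every $G_m$ is a weighted clique, $\ol{G_{\mu_1}}\odot\dots\odot\ol{G_{\mu_\ell}}$ is a weighted complete multipartite graph, in which a stable set lies within one (edgeless) block, so $X_{\ol{G_\mu}}=\sum_\rho c_{\mu\rho}\til m_\rho$ has $c_{\mu\rho}=0$ unless $\rho$ refines $\mu$, with $c_{\mu\mu}=1$ (peel off the largest block); this unitriangularity with respect to the refinement order shows $\{X_{\ol{G_\mu}}:\mu\vdash n\}$ is a basis of $\til\Lam_n$ for each $n$.
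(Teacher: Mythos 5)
Your proof is correct, and its combinatorial heart is exactly the paper's: the coefficient $[\til{m}_n]X_{\ol{G_n}}=|\tn{St}_{(n)}(\ol{G_n})|$ is nonzero precisely when $\ol{G_n}$ is edgeless, i.e.\ when $G_n$ is a weighted clique of total weight $n$. Where you differ is in the algebraic packaging. The paper proves generation by an explicit induction on $n$ (subtract off the lower $\til{m}_\lam$ terms of $X_{\ol{G_n}}$, which are inductively polynomials in earlier generators) and proves algebraic independence by a minimal-degree-relation argument keyed to the leading $\til{m}_n^k$ factor. You instead observe that $(\til{\Lam},\odot)$ is the free graded polynomial algebra on the $\til{m}_{(n)}$'s, so the degree-$n$ indecomposables form a one-dimensional space and a homogeneous sequence $b_n$ of degree $n$ freely generates iff each $[\til{m}_{(n)}]b_n\ne 0$, with independence then free from the Hilbert series $\prod_n(1-t^n)^{-1}$. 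This is a cleaner and more reusable reduction: it isolates the single coefficient computation as the only graph-theoretic input, and it handles the ``only if'' and the independence claim in one stroke rather than two separate ad hoc arguments. Your parenthetical that a basis forces the multiset of total weights of the $G_n$'s to be $\{1,2,3,\dots\}$ is the right way to discharge the implicit weight hypothesis (again via Hilbert series), and your hands-on alternatives --- the codimension-one functional $f\mapsto[\til{m}_{(n)}]f$ for ``only if'' and unitriangularity with respect to refinement for ``if'' --- are both sound and in fact slightly tidier than the paper's minimal-relation argument. No gaps.
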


Another natural question is whether this ring $\til{\Lam}$ has additional structure, and whether any of the ``graph complement maps" from $\Lambda$ to $\til{\Lambda}$ taking the CSFs of certain graphs to the CSFs of their complements preserve that structure. We show in \S\ref{sec:hopf_structure} that $\til{\Lam}$ is a Hopf algebra:

\begin{prop}\label{prop:hopf}
    $\til{\Lam}$ is a graded Hopf algebra with the same grading and coproduct as $\Lam$ and with antipode $\til{m}_n \mapsto -\til{m}_n,$ and it is isomorphic to $\Lam$ as a graded Hopf algebra via the map $p_\lam\mapsto \til{m}_\lam.$ 
\end{prop}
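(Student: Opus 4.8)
The plan is to build the linear isomorphism $\phi\colon\Lam\to\til\Lam$ determined by $p_\lam\mapsto\til m_\lam$, check that it is a graded algebra isomorphism from $(\Lam,\cdot)$ to $(\til\Lam,\odot)$, and then check that it intertwines the standard coproduct of $\Lam$ with itself; once that is done, the entire Hopf structure transports along $\phi$.

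\emph{Step 1: $\phi$ is a graded algebra isomorphism.} Since $\{p_\lam\}$ and $\{\til m_\lam\}$ are both bases of $\Lam$ by homogeneous elements with $\deg p_\lam=\deg\til m_\lam=|\lam|$, the map $\phi$ is a well-defined graded linear isomorphism with $\phi(1)=1$. From $\til m_{\lam_1}\odot\cdots\odot\til m_{\lam_\ell}=\til m_{(\lam_1)\sqcup\cdots\sqcup(\lam_\ell)}=\til m_\lam$ it follows that the algebra map $(\Lam,\cdot)\to(\til\Lam,\odot)$ extending $p_n\mapsto\til m_n$ sends $p_\lam$ to $\til m_\lam$, hence coincides with $\phi$; since the $\til m_n$ freely generate $(\til\Lam,\odot)$ (the $\odot$-monomials in them are exactly the basis $\{\til m_\lam\}$) and the $p_n$ freely generate $(\Lam,\cdot)$, this algebra map is bijective. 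So $\phi$ is an isomorphism of graded algebras.

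\emph{Step 2: $\phi$ intertwines the coproduct.} Write $\Delta$ for the usual coproduct of $\Lam$, with its alphabet-splitting description $\Delta f=\sum f'\otimes f''$ where $f(x_1,x_2,\dots,y_1,y_2,\dots)=\sum f'(x)\,f''(y)$. Using $\til m_\lam=\sum_\iota\prod_{j=1}^\ell x_{\iota(j)}^{\lam_j}$, the sum over injections $\iota\colon\{1,\dots,\ell\}\hookrightarrow\mathbb{Z}_{>0}$, splitting the alphabet sends each index $\iota(j)$ to the $x$- or the $y$-block (indices in different blocks being automatically distinct), so collecting terms by the set $A\subseteq\{1,\dots,\ell\}$ of indices sent to $x$ yields
\[
\Delta\,\til m_\lam=\sum_{A\sqcup B=\{1,\dots,\ell\}}\til m_{\lam|_A}\otimes\til m_{\lam|_B},
\]
where $\lam|_A$ is the sub-multiset $\{\lam_j:j\in A\}$. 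On the other hand each $p_n$ is primitive, so $\Delta p_\lam=\sum_{A\sqcup B=\{1,\dots,\ell\}}p_{\lam|_A}\otimes p_{\lam|_B}$, whence $(\phi\otimes\phi)\Delta p_\lam=\sum_{A\sqcup B}\til m_{\lam|_A}\otimes\til m_{\lam|_B}=\Delta\,\til m_\lam=\Delta\,\phi(p_\lam)$. By linearity, $(\phi\otimes\phi)\circ\Delta=\Delta\circ\phi$.

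\emph{Step 3: conclusion.} Give $\til\Lam$ the coproduct $\Delta$ and the counit $\eps$ of $\Lam$, i.e.\ the very same maps on the underlying graded vector space (note $\eps$ is still an $\odot$-algebra map, reading off the degree-$0$ component, and $\til\Lam_0=\K$). By Step 2, $\phi$ is a morphism of coalgebras, and it preserves the counit since it is graded with $\phi(1)=1$; with Step 1 this makes $\phi$ an isomorphism of graded bialgebras $\Lam\to\til\Lam$. In particular the bialgebra compatibility for $\til\Lam$ holds, transported from $\Lam$ along $\phi$. Being graded connected, $\til\Lam$ is then automatically a Hopf algebra, its unique antipode satisfies $S_{\til\Lam}\circ\phi=\phi\circ S_\Lam$, and evaluating at $p_n$ with $S_\Lam(p_n)=-p_n$ gives $S_{\til\Lam}(\til m_n)=-\til m_n$. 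This establishes all assertions, with $\phi$ the desired Hopf isomorphism. The only substantive point is Step 2, the coproduct formula for the $\til m_\lam$ (equivalently, that the alphabet-splitting coproduct is compatible with the multiset-union/join product $\odot$); everything else is the routine transport of a Hopf structure across an algebra isomorphism. One could alternatively deduce Step 2 from the known coproduct rule $\Delta X_G=\sum_{S\subseteq V(G)}X_{G|_S}\otimes X_{G|_{V\setminus S}}$ applied to weighted cliques, if that identity is taken as available.
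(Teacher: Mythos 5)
Your proof is correct and follows essentially the same strategy as the paper: show that $p_\lam\mapsto\til m_\lam$ is a graded algebra isomorphism whose coproduct structure constants on the two bases agree, then transport the Hopf structure. The only cosmetic difference is that you derive $\Delta\til m_\lam=\sum_{A\sqcup B}\til m_{\lam|_A}\otimes\til m_{\lam|_B}$ by direct alphabet splitting, whereas the paper reads the same formula off the induced-subgraph coproduct rule applied to weighted cliques (the alternative you note at the end).
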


We also describe how the characters of $\til{\Lam}$ act on CSFs:

\begin{prop}\label{prop:characters}
    Each character $\zeta_a$ of $\til{\Lam}$ corresponds to a sequence of scalars $a_n=\zeta_a(\til{m}_n)$, and acts on CSFs by $\zeta_a(X_G) =  \sum_{\lam}|\tn{St}_\lam(G)|a_{\lam_1}\dots a_{\lam_\ell}.$ The sequence for the convolution $\zeta_a * \zeta_b$ has $n^{\text{th}}$ term $a_n+b_n.$
\end{prop}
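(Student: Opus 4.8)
The plan is to read everything off from Proposition~\ref{prop:hopf} together with the $\til m$-expansion of CSFs recalled in the introduction. Since the $\til m_\lam$'s form a multiplicative basis of $\til\Lam$, the generators $\til m_1,\til m_2,\dots$ are algebraically independent and $\til\Lam=\K[\til m_1,\til m_2,\dots]$ as an algebra. An algebra homomorphism out of a polynomial algebra is determined by its values on the generators, and those values may be prescribed arbitrarily; hence $\zeta\mapsto (a_n)_{n\ge1}$ with $a_n:=\zeta(\til m_n)$ is a bijection from the characters of $\til\Lam$ onto scalar sequences, giving the first claim. For the action on a CSF $X_G$, I would combine the identity $X_G=\sum_\lam|\tn{St}_\lam(G)|\,\til m_\lam$ with the relation $\til m_\lam=\til m_{\lam_1}\odot\cdots\odot\til m_{\lam_\ell}$ that defines the multiplication of $\til\Lam$: applying the algebra map $\zeta_a$ term by term gives at once $\zeta_a(X_G)=\sum_\lam|\tn{St}_\lam(G)|\,\zeta_a(\til m_{\lam_1})\cdots\zeta_a(\til m_{\lam_\ell})=\sum_\lam|\tn{St}_\lam(G)|\,a_{\lam_1}\cdots a_{\lam_\ell}$.

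For the convolution statement, recall that the convolution product $\zeta_a*\zeta_b$ (that is, $\zeta_a\otimes\zeta_b$ applied after the coproduct $\Delta$, followed by multiplying in $\K$) is again a character of $\til\Lam$ --- the standard fact that the characters of a bialgebra form a monoid, indeed a group when it is Hopf, under convolution --- so it corresponds to some sequence whose $n$th term is $(\zeta_a*\zeta_b)(\til m_n)$. The key point is that $\til m_n$ is primitive in $\til\Lam$: it equals $p_n$ as an element of the underlying vector space $\Lam$, and $\til\Lam$ carries the same coproduct as $\Lam$, under which $p_n$ is primitive (equivalently, $\til m_n$ is the image of the primitive $p_n$ under the Hopf isomorphism $p_\lam\mapsto\til m_\lam$ of Proposition~\ref{prop:hopf}). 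Hence $\Delta(\til m_n)=\til m_n\otimes1+1\otimes\til m_n$, and since $\zeta_a(1)=\zeta_b(1)=1$ we obtain $(\zeta_a*\zeta_b)(\til m_n)=\zeta_a(\til m_n)+\zeta_b(\til m_n)=a_n+b_n$.

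I do not anticipate a real obstacle here: the proposition is essentially an unwinding of definitions once Proposition~\ref{prop:hopf} is available. The only points needing a moment's care are (i) that ``arbitrary scalar sequence'' is legitimate even though there are infinitely many generators $\til m_n$ --- this is fine because each element of $\til\Lam$ is a polynomial in finitely many of the $\til m_n$, so assigning values to all generators still yields a well-defined algebra homomorphism --- and (ii) checking that convolution preserves the character property, so that ``the sequence for $\zeta_a*\zeta_b$'' is meaningful. Neither requires more than a sentence.
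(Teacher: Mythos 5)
Your proposal is correct and follows essentially the same route as the paper: determine the character by its values $a_n=\zeta_a(\til m_n)$ on the algebra generators, apply linearity and multiplicativity to the expansion $X_G=\sum_\lam|\tn{St}_\lam(G)|\,\til m_\lam$, and use the primitivity $\Delta\til m_n=\til m_n\otimes 1+1\otimes\til m_n$ to compute the convolution. The extra care you take about arbitrary sequences being legitimate and about convolution preserving multiplicativity is sound but not needed beyond what the paper already assumes.
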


In particular, when $a_1,a_2,\dots$ are positive integers, $\zeta_a(X_G)$ counts ways to partition $V(G)$ into stable sets and then assign a color to all vertices in each stable set, such that stable sets of weight $n$ have a particular set of $a_n$ color options. Convolution corresponds to taking the disjoint union of the allowed color sets for each stable set size. If $a_n = k$ for all $n$, this the same as the chromatic polynomial $\chi_G(k)$, except that multiple stable sets may get the same color instead of every stable set being required to get a different color.

We can also describe $\til{\Lam}$ as a quotient of the Hopf algebra ${\textsf{\textup{WGraphs}}}$ of weighted graphs:

\begin{prop}\label{prop:WGraphs_to_Lam_til}
    The map $G\mapsto X_{\ol{G}}$ is a morphism from the Hopf algebra ${\textsf{\textup{WGraphs}}}$ of weighted graphs to $\til{\Lam}$, with kernel given by elements of the form $G - G\bs e - G/e$, such that $G\bs e$ is $G$ with an edge $e$ removed, and $G/e$ is $G$ with $e$ contracted and the weights of its endpoints added to form the weight of the new vertex.
\end{prop}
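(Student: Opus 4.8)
The plan is to verify the three things required by Proposition~\ref{prop:WGraphs_to_Lam_til}: that $G \mapsto X_{\ol{G}}$ is an algebra map, that it is a coalgebra map, and that its kernel is exactly the span of the triangular modular relations $G - G\bs e - G/e$. For the algebra structure, recall that multiplication in $\WGraphs$ is disjoint union, and $\ol{G \sqcup H} = \ol G \odot \ol H$ since complementing a disjoint union connects every vertex of one piece to every vertex of the other. By Tsujie's identity $X_{A \odot B} = X_A \odot X_B$ in $\til\Lam$, we get $X_{\ol{G\sqcup H}} = X_{\ol G} \odot X_{\ol H}$, so the map respects products; it clearly sends the empty graph to $1$. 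For the coalgebra structure, the key point is Proposition~\ref{prop:hopf}: $\til\Lam$ has the \emph{same} coproduct as $\Lam$. So I would reduce the comultiplicativity of $G \mapsto X_{\ol G}$ into $\til\Lam$ to the already-known comultiplicativity of the ordinary CSF map $\WGraphs \to \Lam$. One clean way: the standard coproduct on $\WGraphs$ is $\Delta(G) = \sum_{S \sqcup T = V(G)} G|_S \otimes G|_T$, and complementation commutes with taking induced subgraphs, i.e. $\ol{G}|_S = \ol{G|_S}$; combined with the fact that the CSF into $\Lam$ satisfies $\Delta(X_H) = \sum_{S \sqcup T = V(H)} X_{H|_S} \otimes X_{H|_T}$, and that the coproduct of $\til\Lam$ agrees with that of $\Lam$, we conclude $\Delta(X_{\ol G}) = \sum X_{\ol G|_S} \otimes X_{\ol G|_T} = (\text{map} \otimes \text{map})\Delta(G)$. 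Counit and antipode compatibility then follow formally once we know it is a bialgebra map between Hopf algebras.

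For the kernel, the cleanest route is to transport the known result through complementation. Penaguiao~\cite{penaguiao2020kernel} (as cited in the excerpt) shows that the kernel of the ordinary CSF map $\Graphs \to \Lam$ (or its weighted analogue $\WGraphs \to \Lam$) is generated by the triangular modular relations; I would state precisely the weighted version I need and cite it, or re-derive it if the weighted statement is not in the literature verbatim. Then I would use that complementation is a linear involution on $\WGraphs$ that intertwines our map with the ordinary CSF map in the following sense: if $\kappa \colon \WGraphs \to \Lam$ is the ordinary CSF map and $c \colon \WGraphs \to \WGraphs$ is $G \mapsto \ol G$, then our map $\WGraphs \to \til\Lam$ is, as a linear map of vector spaces, $\kappa \circ c$ followed by the identity $\Lam \to \til\Lam$ on underlying spaces (this last identification is legitimate since $\til\Lam$ and $\Lam$ share the same underlying graded vector space, per Proposition~\ref{prop:hopf}). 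Hence the kernel of our map is $c^{-1}$ of the kernel of $\kappa$, which is $c = c^{-1}$ applied to the span of $\{H - H\bs e - H/e\}$. So it remains to check that complementing a triangular modular relation gives, up to sign and reindexing, another element of the span of triangular modular relations --- concretely, I expect $\ol{H - H\bs e - H/e}$ to equal $\pm(G - G\bs e - G/e)$ for a suitable $G$ and edge, by a direct local computation at the three vertices (or two, after contraction) involved. This translation step is the one place where one must be careful about what ``$G/e$'' means under complementation, since contracting an edge and adding vertex weights does not obviously commute with complementation; I expect it does work out because the modular relation is really a statement about a pair of non-adjacent-vs-adjacent configurations on three vertices, which is self-dual under complement, but making this precise is the main obstacle.

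A subtlety worth flagging is the weighted setting: the Guay-Paquet/Orellana--Scott/Penaguiao results are often stated for unweighted graphs, and the excerpt's Proposition~\ref{prop:WGraphs_to_Lam_til} is about $\WGraphs$. I would either cite a weighted version directly or note that the argument of \cite{penaguiao2020kernel} goes through with vertex weights, the only change being that $G/e$ carries the summed weight on the merged vertex (which is exactly the definition given in the statement). If a fully self-contained proof is wanted, I would instead argue that (i) the triangular modular relations lie in the kernel --- an immediate consequence of the deletion-contraction-type recursion $X_{\ol G} = X_{\ol{G \bs e}} - X_{\ol{G / e}}$ or directly of the classical modular law for CSFs translated through complementation --- and (ii) these relations span the whole kernel, by a dimension count: the quotient of $\WGraphs$ by these relations has a basis indexed by partitions (one can reduce any weighted graph to a combination of weighted edgeless graphs, i.e.\ $p$-type monomials, using the relations), matching $\dim \til\Lam$ in each degree, and the map is surjective, forcing the kernel to be no larger.
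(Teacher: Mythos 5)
Your argument for the morphism part is exactly the paper's: products are preserved because $\ol{G\sqcup H}=\ol{G}\odot\ol{H}$ together with Tsujie's identity, and coproducts because $\ol{G}|_S=\ol{G|_S}$ and $\til{\Lam}$ carries the same coproduct as $\Lam$. That part is correct and needs no changes.

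The genuine gap is in the kernel argument, and it sits exactly at the step you flagged as ``the main obstacle'': contraction does \emph{not} commute with complementation, and the identity you hoped for fails. Concretely, let $G$ be the path on three unweighted vertices $u,v,x$ with edges $uv$ and $vx$, and let $e=uv$. Then $X_{\ol{G}}=\til{m}_{111}+2\til{m}_{21}$, $X_{\ol{G\bs e}}=\til{m}_{111}+\til{m}_{21}$, and $X_{\ol{G/e}}=\til{m}_{21}+\til{m}_{3}$, so $X_{\ol{G}}-X_{\ol{G\bs e}}-X_{\ol{G/e}}=-\til{m}_3\neq 0$, and likewise the recursion $X_{\ol{G}}=X_{\ol{G\bs e}}-X_{\ol{G/e}}$ invoked in your fallback route is false. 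The underlying reason is that $(\ol{G\bs e})/e\neq\ol{G/e}$ in general: in $(\ol{G\bs e})/e$ the merged vertex is adjacent to every vertex that fails to be a $G$-neighbor of $u$ \emph{or} fails to be a $G$-neighbor of $v$, whereas in $\ol{G/e}$ it is adjacent only to the vertices adjacent to neither. (The paper's own proof asserts $(\ol{G\bs e})/e=\ol{G/e}$ at this point, so your suspicion was well founded; the statement as literally written needs repair.) What the Crew--Spirkl relation applied to the graph $\ol{G\bs e}$ at the edge $e$ actually yields is $X_{\ol{G}}=X_{\ol{G\bs e}}+X_{(\ol{G\bs e})/e}$, i.e.\ the kernel elements are $G-G\bs e-H_e$, where $H_e$ is obtained from $G$ by merging the endpoints of $e$ into one vertex of weight $w(u)+w(v)$ adjacent to the \emph{common} neighbors of $u$ and $v$ only (in the example above, $H_e$ is the edgeless graph with weights $2,1$, and the relation then checks out). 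Your Route~A, carried out carefully, produces exactly these corrected generators: complementing $H-H\bs f+H/f$ and setting $G=\ol{H\bs f}$ gives $-(G-G\bs f-H_f)$, not $\pm(G-G\bs f-G/f)$.

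Once the generators are corrected, your spanning argument is sound and matches the paper's in spirit: the relation $G\equiv G\bs e+H_e$ reduces every weighted graph, by induction on vertices and edges, to a combination of edgeless graphs, whose images $X_{\ol{\,\cdot\,}}$ are the $\til{m}_\lam$ and hence a basis of $\til{\Lam}$; surjectivity then forces the kernel to be no larger than the span of the relations. So the only repair needed is to replace $G/e$ by the common-neighbor contraction $H_e$ throughout the kernel claim.
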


This is almost the same as the chromatic relations $G - G\bs e + G/e$ generating the kernel of the $\WGraphs \to \Lambda$ map, except that the $G/e$ term has the opposite sign.

For any chromatic basis generated by a sequence of graphs $G_1,G_2,\dots$, the map $X_{G_\lam}\mapsto X_{\ol{G_\lam}}$ is multiplicative and so is automatically a morphism of algebras, but in general it need not be a morphism of Hopf algebras.  However, we show in \S\ref{sec:hopf_maps} that in certain cases it is:

\begin{prop}\label{prop:hopf_complement_maps}
    If $G_1,G_2,\dots$ is a family of connected weighted graphs such that $G_n$ has total weight $n$ and all its connected induced subgraphs are isomorphic to some $G_i$, the map $X_{G_\lam}\mapsto X_{\ol{G_\lam}}$ is a morphism of graded Hopf algebras (though not an isomorphism unless the $G_n$'s are cliques).
\end{prop}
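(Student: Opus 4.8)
The plan is to realize the map as the unique algebra homomorphism $\phi$ determined on generators by $X_{G_n}\mapsto X_{\ol{G_n}}$, and then verify it intertwines the two coproducts by evaluating on those generators; the point is that the hypothesis on connected induced subgraphs is exactly what is needed to make $\phi$ commute with ``passing to an induced subgraph,'' which is what the coproduct does. First I would set up $\phi$: since the $G_n$ are connected of total weight $n$, the theorem of Cho--van Willigenburg and Alinaeifard--Wang--van Willigenburg says the $X_{G_n}$ are algebraically independent generators of $\Lam$, i.e. $\Lam$ is a polynomial algebra on $X_{G_1},X_{G_2},\dots$. Hence there is a unique algebra map $\phi\colon (\Lam,\cdot)\to(\til\Lam,\odot)$ with $\phi(X_{G_n})=X_{\ol{G_n}}$; because the complement of a disjoint union is the join of the complements, Tsujie's identity $X_{G\odot H}=X_G\odot X_H$ gives $\phi(X_{G_\lam})=X_{\ol{G_{\lam_1}}}\odot\cdots\odot X_{\ol{G_{\lam_\ell}}}=X_{\ol{G_\lam}}$, so $\phi$ is the claimed map. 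It is graded since $\ol{G_n}$ also has total weight $n$, and it fixes $1$.

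Next I would reduce ``Hopf morphism'' to a check on generators. By Proposition~\ref{prop:hopf}, $\til\Lam$ is a graded connected Hopf algebra with the same coproduct $\Delta$ as $\Lam$, so it suffices to show $\phi$ is a graded bialgebra map (the counit condition is then automatic by the grading, and for graded connected bialgebras a bialgebra map automatically commutes with the antipode). Since $\Delta$ is an algebra map for each of the products $\cdot$ and $\odot$, and $\phi$ is an algebra map, both $\Delta\circ\phi$ and $(\phi\otimes\phi)\circ\Delta$ are algebra maps $(\Lam,\cdot)\to(\til\Lam\otimes\til\Lam,\odot\otimes\odot)$; as $\Lam$ is generated by the $X_{G_n}$, it is enough to check $\Delta(\phi(X_{G_n}))=(\phi\otimes\phi)(\Delta(X_{G_n}))$ for every $n$.

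Now comes the key step. Write $\Delta(X_G)=\sum_{S\se V(G)}X_{G[S]}\otimes X_{G[V(G)\setminus S]}$ for the (common) coproduct on CSFs; using that $G\mapsto X_{\ol{G}}$ is a Hopf morphism $\WGraphs\to\til\Lam$ by Proposition~\ref{prop:WGraphs_to_Lam_til}, we get $\Delta(X_{\ol{G_n}})=\sum_{S\se V(G_n)}X_{\ol{G_n[S]}}\otimes X_{\ol{G_n[V(G_n)\setminus S]}}$. On the other side, for any $S\se V(G_n)$, each connected component of the induced subgraph $G_n[S]$ is a connected induced subgraph of $G_n$, hence isomorphic to $G_i$, where $i$ is its total weight; thus $G_n[S]\cong G_\mu$, where $\mu$ is the partition whose parts are the component weights, so $X_{G_n[S]}=X_{G_\mu}$ and $\phi(X_{G_n[S]})=X_{\ol{G_\mu}}=X_{\ol{G_n[S]}}$ (again using complement of disjoint union $=$ join of complements). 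Feeding this into the display gives $(\phi\otimes\phi)(\Delta(X_{G_n}))=\sum_{S}X_{\ol{G_n[S]}}\otimes X_{\ol{G_n[V(G_n)\setminus S]}}=\Delta(X_{\ol{G_n}})=\Delta(\phi(X_{G_n}))$, as needed. Finally, $\phi$ carries the multiplicative basis $\{X_{G_\lam}\}$ onto $\{X_{\ol{G_\lam}}\}$, so it is an isomorphism if and only if the latter is a basis of $\til\Lam$, which by Proposition~\ref{prop:cochromatic} happens precisely when each $G_n$ is a weighted clique.

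The main obstacle is the identity $\phi(X_{G_n[S]})=X_{\ol{G_n[S]}}$: for a general chromatic-basis family, $\phi$ need not commute with taking induced subgraphs, and the left-hand side is an uncontrolled element of $\til\Lam$; it is exactly the closure hypothesis --- every connected induced subgraph lies in the generating family --- that pins $G_n[S]$ down to a product $G_\mu$ of generators and lets the complement operation pass through $\phi$. Everything else is bookkeeping with facts already in hand: polynomiality of $\Lam$ on the $X_{G_n}$, the connectedness and coproduct of $\til\Lam$ (Proposition~\ref{prop:hopf}), the Hopf morphism $\WGraphs\to\til\Lam$ (Proposition~\ref{prop:WGraphs_to_Lam_til}), and the basis criterion (Proposition~\ref{prop:cochromatic}).
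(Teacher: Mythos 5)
Your proof is correct and follows essentially the same route as the paper: the crux in both is that the closure hypothesis forces every induced subgraph $G_n|_S$ to be some $G_\mu$, and that complementation commutes with passing to induced subgraphs, so the coproduct structure constants match on both sides. The only organizational difference is that you verify coproduct compatibility on the generators $X_{G_n}$ and extend by the algebra-map argument, whereas the paper checks it directly on the whole multiplicative basis $X_{G_\lam}$; both are valid.
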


In general, the maps from $\Lam$ to $\til{\Lam}$ sending the CSFs of some particular graphs to the CSFs of their complements need not also send the CSFs of other graphs to the CSFs of their complements, but in some cases, it turns out they do:

\begin{prop}\label{prop:clique_maps}
    For any sequence $v_1,v_2,\dots$ of positive integers with $v_n\le n$, there is an isomorphism of graded Hopf algebras from $\Lam$ to $\til{\Lam}$ sending $X_G$ to $X_{\ol{G}}$ for all weighted cliques $G$ such that for every $n$, all induced subgraphs of $G$ of weight $n$ have exactly $v_n$ vertices.
\end{prop}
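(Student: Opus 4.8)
The plan is to produce the isomorphism explicitly. By Proposition~\ref{prop:hopf}, the map $\Psi\colon\Lam\to\til\Lam$, $p_\lam\mapsto\til m_\lam$, is an isomorphism of graded Hopf algebras, and $\til\Lam$ carries the same coproduct as $\Lam$. For any sequence $\mathbf c=(c_n)_{n\ge 1}$ of nonzero scalars, the assignment $p_n\mapsto c_n p_n$ extends uniquely to a graded algebra automorphism $\psi_{\mathbf c}$ of $\Lam$ (the $p_n$ are algebraically independent generators), and $\psi_{\mathbf c}$ respects the coproduct because $c_n p_n$ is primitive whenever $p_n$ is; so $\psi_{\mathbf c}$ is a graded Hopf automorphism, and $\Phi_{\mathbf c}:=\Psi\circ\psi_{\mathbf c}\colon\Lam\to\til\Lam$ is a graded Hopf isomorphism with $\Phi_{\mathbf c}(p_\lam)=c_\lam\,\til m_\lam$, where $c_\lam:=\prod_i c_{\lam_i}$. (In fact every graded Hopf morphism $\Lam\to\Lam$ has this form, since it must carry the one-dimensional degree-$n$ space of primitives $\K p_n$ into itself, but we only need one map.) The task is then to choose $\mathbf c$ making $\Phi_{\mathbf c}(X_G)=X_{\ol G}$ for all cliques $G$ in the given family.

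Fix a weighted clique $G$ on vertex set $V$ with weight partition $\mu$, so $X_G=\til m_\mu$, while its complement is weighted edgeless and $X_{\ol G}=\prod_{v\in V}p_{w(v)}\in\til\Lam$. I would first record the power-sum expansion of $\til m_\mu$ by Möbius inversion over the lattice $\Pi(V)$ of set partitions of $V$. Writing $w(\pi)$ for the partition whose parts are the block weights $w(B)=\sum_{v\in B}w(v)$, expanding $\prod_{B\in\pi}p_{w(B)}=\prod_{B\in\pi}\bigl(\sum_i x_i^{w(B)}\bigr)$ and grouping colorings by their fiber partition gives $\prod_{B\in\sigma}p_{w(B)}=\sum_{\pi\ge\sigma}\til m_{w(\pi)}$ for every $\sigma\in\Pi(V)$; taking $\sigma=\hat 0$ (the all-singletons partition, with $w(\hat 0)=\mu$) shows $X_{\ol G}=\sum_{\pi\in\Pi(V)}\til m_{w(\pi)}$. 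Applying Möbius inversion on $\Pi(V)$ — whose Möbius function assigns to $\hat 0\le\pi$ the value $\prod_{B\in\pi}(-1)^{|B|-1}(|B|-1)!$ — gives
\[
\til m_\mu \;=\; \sum_{\pi\in\Pi(V)}\;\prod_{B\in\pi}\Bigl[(-1)^{|B|-1}(|B|-1)!\;p_{w(B)}\Bigr].
\]

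Applying $\Phi_{\mathbf c}$ (multiplicative, with $\Phi_{\mathbf c}(p_{w(B)})=c_{w(B)}\til m_{w(B)}$ and $\til m_{w(B_1)}\odot\cdots\odot\til m_{w(B_k)}=\til m_{w(\pi)}$) turns this into
\[
\Phi_{\mathbf c}(X_G)\;=\;\sum_{\pi\in\Pi(V)}\;\prod_{B\in\pi}\Bigl[(-1)^{|B|-1}(|B|-1)!\;c_{w(B)}\Bigr]\,\til m_{w(\pi)},
\]
so comparing with $X_{\ol G}=\sum_\pi\til m_{w(\pi)}$ it is enough that every factor $(-1)^{|B|-1}(|B|-1)!\,c_{w(B)}$ equal $1$, i.e.\ that $c_n=(-1)^{k-1}/(k-1)!$ whenever some sub-multiset of $\mu$ with sum $n$ has $k$ parts. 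But $G$ lies in the family precisely when every weight-$n$ induced subgraph — equivalently, every sub-multiset of $\mu$ of sum $n$ — has exactly $v_n$ vertices, so this requirement is uniform over the whole family, and it is met by the single sequence $c_n:=(-1)^{v_n-1}/(v_n-1)!$, which is nonzero (over a field of characteristic zero) since $(-1)^{k-1}(k-1)!\cdot\tfrac{(-1)^{k-1}}{(k-1)!}=1$. With this $\mathbf c$, $\Phi_{\mathbf c}$ is the required graded Hopf isomorphism.

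I expect the first displayed identity to be the crux: one has to set up the partition-lattice Möbius inversion for $\til m_\mu$ correctly and then spot that the multipliers $(-1)^{v_n-1}/(v_n-1)!$ are exactly the reciprocals of the Möbius weights $(-1)^{|B|-1}(|B|-1)!$, after which everything collapses in one line. The remaining points are routine: that $X_{\ol G}=\prod_v p_{w(v)}$ for a weighted edgeless graph and $\til m_n=p_n$; that $\Phi_{\mathbf c}$ really is a graded Hopf morphism (immediate from $\Phi_{\mathbf c}=\Psi\circ\psi_{\mathbf c}$); and that the constraint is uniform over the family (immediate from the definition of the family). The hypothesis $v_n\le n$ is not used in constructing the map — it simply records that a weight-$n$ induced subgraph of a positively weighted clique can have at most $n$ vertices.
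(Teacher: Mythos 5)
Your proof is correct, and the map you construct ($p_n\mapsto (-1)^{v_n-1}\til{m}_n/(v_n-1)!$) is exactly the one in the paper, but you reach it by a genuinely different computation. The paper extracts the coefficient $[p_\lam]X_G$ from the weighted Bernardi--Nadeau expansion (Lemma \ref{lem:source_components}): it counts partitions of $V(G)$ into source components and observes that a $k$-vertex clique admits $(k-1)!$ acyclic orientations with a prescribed source. You instead never touch acyclic orientations: you prove $\prod_{B\in\sigma}p_{w(B)}=\sum_{\pi\ge\sigma}\til{m}_{w(\pi)}$ by grouping colorings by their fiber partition and then apply M\"obius inversion on the partition lattice $\Pi(V)$, so the factors $(-1)^{|B|-1}(|B|-1)!$ come from $\mu(\hat 0,\pi)$ rather than from orientation counts. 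The two are secretly the same numbers (the M\"obius function of $\Pi_k$ versus rooted acyclic orientations of $K_k$), but your derivation is more self-contained for this particular proposition --- it avoids the weighted generalization of the Bernardi--Nadeau lemma that the paper spends \S\ref{sec:p-expansion_lemma} establishing --- at the cost of being specific to cliques, whereas the paper's Lemma \ref{lem:source_components} is reused for Propositions \ref{prop:unweighted_triangle_free_maps}--\ref{prop:the_culmination}. Your framing of the map as $\Psi\circ\psi_{\mathbf c}$ with $\psi_{\mathbf c}$ a primitive-rescaling automorphism also cleanly packages the ``graded Hopf isomorphism'' verification that the paper does by hand at the end of \S\ref{subsec:clique_maps_proof}. (Two trivial slips: your parenthetical about classifying graded Hopf morphisms should read $\Lam\to\til{\Lam}$, not $\Lam\to\Lam$; and the characteristic-zero assumption you invoke is implicit throughout the paper, so no harm done.)
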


For example, if $v_n = 1$ for all $n$, the map in Proposition \ref{prop:clique_maps} is the map $p_\lam \mapsto\til{m}_\lam$ from Proposition \ref{prop:hopf}, and if $v_n = n$ for all $n$, the map in Proposition \ref{prop:clique_maps} sends $\lam!e_\lam$ to $r_\lam$.

There is also another notable case where the CSFs of many graphs get sent to the CSFs of their complements by a single Hopf algebra morphism (though in this case it is not an isomorphism):

\begin{prop}\label{prop:unweighted_triangle_free_maps}
    There is a single morphism of graded Hopf algebras from $\Lam$ to $\til{\Lam}$ sending the CSFs of all unweighted triangle-free graphs to the CSFs of their complements. 
\end{prop}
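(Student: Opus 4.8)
The plan is to write down the morphism explicitly and then check the two things it must satisfy. Since $(\Lam,\cdot)$ is the polynomial algebra $\K[p_1,p_2,\dots]$ and $(\til\Lam,\odot)$ is the commutative polynomial algebra $\K[\til m_1,\til m_2,\dots]$, there is a unique algebra morphism $\phi\colon(\Lam,\cdot)\to(\til\Lam,\odot)$ with $\phi(p_1)=\til m_1$, $\phi(p_2)=-\til m_2$, and $\phi(p_n)=0$ for all $n\ge 3$. I will show this $\phi$ is a morphism of graded Hopf algebras and that $\phi(X_G)=X_{\ol G}$ for every unweighted triangle-free graph $G$; it is visibly not injective (for instance $p_3\in\ker\phi$), hence not an isomorphism, consistent with the statement.

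To see $\phi$ is a Hopf morphism, note first that it is graded because $p_n$ and $\til m_{(n)}$ both have degree $n$. For the coproduct, it is enough to check $\Delta\phi(p_n)=(\phi\otimes\phi)\Delta p_n$ for each $n$: both $\Delta\circ\phi$ and $(\phi\otimes\phi)\circ\Delta$ are algebra maps from $(\Lam,\cdot)$ to $\til\Lam\otimes\til\Lam$ with its $\odot$-tensor structure — using that $\Delta$ is an algebra map on $\Lam$ and (by Proposition~\ref{prop:hopf}) on $\til\Lam$, together with multiplicativity of $\phi$ — so agreement on the algebra generators $p_n$ forces equality. Since $p_n$ is primitive in $\Lam$ and $\til m_{(n)}$ is primitive in $\til\Lam$ (it is the image of the primitive $p_n$ under the Hopf isomorphism $p_\lam\mapsto\til m_\lam$ of Proposition~\ref{prop:hopf}), each $\phi(p_n)$ is a scalar multiple of a primitive, and both sides of the required identity reduce to $\phi(p_n)\otimes 1+1\otimes\phi(p_n)$. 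Compatibility with the counits is immediate on generators, and compatibility with the antipodes is then automatic.

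Next I would compute $\phi(X_G)$ using the power-sum expansion $X_G=\sum_{F\se E(G)}(-1)^{|F|}p_{\lam(F)}$, where $\lam(F)$ records the vertex-counts of the connected components of the spanning subgraph $(V(G),F)$. Writing $\phi(p_\lam)=\big(\prod_i c_{\lam_i}\big)\til m_\lam$ with $c_1=1$, $c_2=-1$, $c_j=0$ for $j\ge 3$, the summand for $F$ vanishes unless every component of $(V(G),F)$ has at most two vertices; as a connected graph on at most two vertices is a vertex or an edge, the surviving $F$ are exactly the matchings of $G$. For a matching $F$ of size $k$ in a graph on $N=|V(G)|$ vertices one has $\lam(F)=(2^k1^{N-2k})$ and the signs cancel, $(-1)^{|F|}\prod_i c_{\lam(F)_i}=(-1)^k(-1)^k=1$, so $\phi(X_G)=\sum_{k\ge 0}m_k(G)\,\til m_{(2^k1^{N-2k})}$, where $m_k(G)$ counts the $k$-edge matchings of $G$. (No triangle-freeness is used here.)

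Finally I would identify this with $X_{\ol G}$, which is where triangle-freeness enters: the stable sets of $\ol G$ are the cliques of $G$, and in a triangle-free graph these are just its vertices and edges, so a partition of $V(G)$ into stable sets of $\ol G$ is exactly a matching of $G$ together with singletons on the rest; hence $|\tn{St}_{(2^k1^{N-2k})}(\ol G)|=m_k(G)$ and all other $\tn{St}_\mu(\ol G)$ are empty, giving $X_{\ol G}=\sum_{k\ge 0}m_k(G)\,\til m_{(2^k1^{N-2k})}=\phi(X_G)$. I expect the only real obstacle to be the bialgebra bookkeeping of the second paragraph — especially justifying that checking the coproduct on the generators $p_n$ suffices — everything else being a direct computation. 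One can moreover note that $\phi$ is forced, hence unique: $\phi(p_1),\phi(p_2)$ are pinned down by $X_{K_1},X_{K_2}$, and comparing the coefficient of $\til m_{(n)}$ in $\phi(X_{P_n})$ and in $X_{\ol{P_n}}$ (which is $0$ for $n\ge 3$) forces $\phi(p_n)=0$ for $n\ge 3$.
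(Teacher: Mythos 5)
Your proof is correct, and it constructs exactly the same morphism as the paper ($p_1\mapsto\til m_1$, $p_2\mapsto-\til m_2$, $p_n\mapsto 0$ for $n\ge 3$), with the same justification that any map sending each $p_n$ to a scalar multiple of $\til m_n$ is automatically a graded Hopf morphism via primitivity. Where you genuinely diverge is in the coefficient computation: the paper works from its weighted Bernardi--Nadeau expansion (Lemma \ref{lem:source_components}), identifying $(-1)^k[p_{2^k1^{N-2k}}]X_G$ with the count of acyclic orientations whose source components are $k$ edges plus singletons, whereas you use Stanley's edge-subset expansion $X_G=\sum_{F\se E(G)}(-1)^{|F|}p_{\lam(F)}$ and observe that the only surviving subsets after applying $\phi$ are matchings. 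For the unweighted triangle-free case your route is more elementary and self-contained, since Stanley's formula is classical and you avoid the heaps-of-pieces machinery of \S\ref{sec:p-expansion_lemma} entirely; the paper's choice is driven by the fact that the acyclic-orientation lemma generalizes cleanly to weighted graphs and is reused for Propositions \ref{prop:clique_maps}, \ref{prop:weighted_triangle_free_maps}, and \ref{prop:the_culmination}, so the two approaches trade local simplicity against uniformity across the later results. Your closing uniqueness observation (that $\phi$ is forced on each $p_n$ by $K_1$, $K_2$, and the paths $P_n$) is a correct bonus not claimed in the statement, which only asserts that one morphism works simultaneously for all triangle-free graphs.
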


We can also give a weighted generalization of Proposition \ref{prop:unweighted_triangle_free_maps}, of which Proposition \ref{prop:unweighted_triangle_free_maps} is the special case $V=\{1\},$ $E=\{2\}$, $C = \{3,4,5,\dots\}$:

\begin{prop}\label{prop:weighted_triangle_free_maps}
    For any nonoverlapping sets of positive integers $V,E,$ and $C$, there is a single graded Hopf algebra morphism taking the CSFs of all weighted triangle-free graphs such that the vertex weights all come from $V$, the edge weights (i.e. the sums of the weights of the endpoints of each edge) all come from $E$, and the total weights of connected subgraphs on 3 or more vertices all come from $C$, to the CSFs of their complements.
\end{prop}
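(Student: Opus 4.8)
The plan is to construct $\phi$ explicitly on the power-sum generators and then verify the required identity $\phi(X_G)=X_{\ol G}$ by a direct expansion. For each $n\ge1$ put $c_n=1$ if $n\in V$, $c_n=-1$ if $n\in E$, $c_n=0$ if $n\in C$, and $c_n=0$ otherwise; this is well defined since $V$, $E$, $C$ are pairwise disjoint. Let $\phi\colon\Lam\to\til\Lam$ be the algebra morphism with $p_n\mapsto c_n\til m_n$. It is a morphism of graded Hopf algebras, being the composite of the graded Hopf map $\Lam\to\Lam$, $p_n\mapsto c_np_n$ (an algebra map sending generators to primitives of the same degree), with the isomorphism $\Lam\xrightarrow{\sim}\til\Lam$, $p_\lam\mapsto\til m_\lam$, of Proposition \ref{prop:hopf}; here $\til m_n=\til m_{(n)}$ is the CSF of a single weight-$n$ vertex, i.e.\ $p_n$. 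Writing $\mc G$ for the family of graphs in the statement, it remains to show $\phi(X_G)=X_{\ol G}$ for every $G\in\mc G$.

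For this I would use the standard power-sum expansion of the weighted chromatic symmetric function,
\[
X_G=\sum_{S\subseteq E(G)}(-1)^{|S|}p_{\mu(S)},
\]
where $\mu(S)$ is the partition whose parts are the total weights of the connected components of the spanning subgraph $(V(G),S)$ (Stanley's inclusion--exclusion identity, valid verbatim with vertex weights). Since $\phi$ is an algebra map into $\til\Lam$ and $\til m_a\odot\til m_b=\til m_{a\sqcup b}$, this yields
\[
\phi(X_G)=\sum_{S\subseteq E(G)}(-1)^{|S|}\Bigl(\prod_{q\in\mu(S)}c_q\Bigr)\til m_{\mu(S)}.
\]
The point is that every connected component of $(V(G),S)$ is a connected subgraph of $G$: a one-vertex component has total weight in $V$, contributing a factor $c_q=1$; a two-vertex component is a single edge of $G$, whose total weight (the sum of its endpoint weights) lies in $E$, contributing $c_q=-1$; and a component on three or more vertices has total weight in $C$, contributing $c_q=0$. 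Hence $\prod_{q\in\mu(S)}c_q$ vanishes unless every component has at most two vertices, i.e.\ unless $S$ is a matching of $G$, in which case the product equals $(-1)^{|S|}$ (one factor $-1$ per edge of $S$), cancelling the sign $(-1)^{|S|}$. So $\phi(X_G)=\sum_{M}\til m_{\mu(M)}$, the sum running over matchings $M$ of $G$.

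To finish I would identify this with $X_{\ol G}$ using $[\til m_\lam]X_{\ol G}=|\tn{St}_\lam(\ol G)|$. A stable set of $\ol G$ is exactly a clique of $G$, and because $G$ is triangle-free every clique of $G$ has at most two vertices; so partitions of $V(G)$ into cliques of $G$ correspond bijectively to matchings $M$ of $G$ (with singletons on the uncovered vertices), and the multiset of block weights of the partition attached to $M$ is precisely $\mu(M)$. Therefore $X_{\ol G}=\sum_{M}\til m_{\mu(M)}=\phi(X_G)$, the sum over matchings $M$ of $G$, completing the proof. (Specializing to $V=\{1\}$, $E=\{2\}$, $C=\{3,4,\dots\}$ gives $c_1=1$, $c_2=-1$, $c_n=0$ for $n\ge3$, recovering Proposition \ref{prop:unweighted_triangle_free_maps}.)

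The one delicate point is the evaluation of the factors $c_q$: one must check that the total weight of a connected component of the (possibly non-induced) spanning subgraph $(V(G),S)$ is still constrained by the hypotheses on $\mc G$, which a priori concern only induced subgraphs. This works because the total weight of a component depends only on its vertex set $V'$, and $G|_{V'}$ is connected whenever $(V',S|_{V'})$ is, so that weight equals the total weight of a genuine connected induced subgraph of $G$ on $|V'|$ vertices. I would also note that one should not try to argue by deletion--contraction: deleting an edge keeps $G$ inside $\mc G$, but contracting an edge of a triangle-free graph can create a triangle, so $G/e$ need not lie in $\mc G$; routing through the power-sum expansion avoids contraction altogether.
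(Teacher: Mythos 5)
Your proof is correct, but it routes through a different power-sum expansion than the paper does. The paper proves Proposition \ref{prop:weighted_triangle_free_maps} by expanding $X_G$ via the weighted Bernardi--Nadeau formula (Lemma \ref{lem:source_components}), so that $(-1)^{|V(G)|-\ell(\lam)}[p_\lam]X_G$ counts acyclic orientations with prescribed source-component weights; the hypothesis on $V,E,C$ then forces the surviving source components to be singletons and edges, and these biject with the matchings counted by $[\til m_\lam]X_{\ol G}$. You instead use the weighted Whitney/Stanley edge-subset expansion $X_G=\sum_{S\se E(G)}(-1)^{|S|}p_{\mu(S)}$ and let the scalars $c_q$ kill every subset $S$ having a component on $3$ or more vertices, with the sign $(-1)^{|S|}$ cancelling against the product of $c_q=-1$ over edge components when $S$ is a matching. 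Both arguments reduce to the same combinatorial core (only matchings survive, and matchings of $G$ are exactly clique partitions of $G$, i.e.\ stable partitions of $\ol G$), but your route is more self-contained for this particular proposition: it avoids the weighted heaps-of-pieces machinery the paper develops in \S\ref{sec:p-expansion_lemma}, at the cost of not reusing a lemma the paper also needs for Propositions \ref{prop:clique_maps} and \ref{prop:the_culmination}. Your handling of the one genuinely delicate point --- that components of the spanning subgraph $(V(G),S)$ are non-induced, but their weights depend only on their vertex sets, which induce connected subgraphs of $G$ --- is exactly right, as is your verification that $p_n\mapsto c_n\til m_n$ is a graded Hopf morphism by factoring through the isomorphism of Proposition \ref{prop:hopf}.
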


Finally, we give a further generalization of both Propositions 1.6 and 1.8, of which Proposition 1.8 is the special case $V=C_1$, $E=C_2$, $C=C^*\sqcup C_3\sqcup C_4\sqcup\cdots$, and of which Proposition 1.6 is the special case $C_k=\{n\in\mathbb N\mid v_n=k\}$, $C^*=\emptyset$ (where $\mathbb N = \{1,2,3,\dots\}$ is the set of positive integers):

\begin{prop}\label{prop:the_culmination}
For non-overlapping sets $C^*,C_1,C_2,\dots$ of positive integers, there is a single graded Hopf algebra morphism taking the CSFs of all weighted graphs whose induced $k$-clique weights all come from $C_k$ for every positive integer $k$ and whose induced non-clique connected subgraph weights all come from $C^*$, to the CSFs of their complements.
\end{prop}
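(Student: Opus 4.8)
The plan is to first identify all candidate maps. Because $\Lam$ is freely generated as a commutative algebra by the primitive elements $p_1,p_2,\dots$, and $\til{\Lam}$ has the same coproduct as $\Lam$ (Proposition \ref{prop:hopf}), the space of primitives of $\til{\Lam}$ in each degree $n$ is one-dimensional and spanned by $\til{m}_n=p_n$; hence a graded Hopf algebra morphism $\phi\colon\Lam\to\til{\Lam}$ is precisely a choice of scalars $c_1,c_2,\dots$ with $\phi(p_n)=c_n\til{m}_n$, and then $\phi(p_\lam)=c_{\lam_1}\cdots c_{\lam_\ell}\,\til{m}_\lam$ by multiplicativity. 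Testing on small graphs suggests the right choice: a single weight-$n$ vertex forces $c_n=1$ when $n\in C_1$, a weighted $k$-clique forces $c_n=(-1)^{k-1}/(k-1)!$ when $n\in C_k$, and a path on three vertices forces $c_n=0$ when $n\in C^*$. So I would \emph{define} $\phi$ by setting $c_n:=(-1)^{k-1}/(k-1)!$ if $n$ lies in $C_k$ and $c_n:=0$ otherwise (well defined since the $C_k$ and $C^*$ are pairwise disjoint), and record the routine check that, since each $\phi(p_n)$ is primitive, this indeed extends to a graded Hopf algebra morphism $\Lam\to\til{\Lam}$.

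The bulk of the work is showing this single $\phi$ sends $X_G$ to $X_{\ol{G}}$ for every $G$ in the family $\mathcal{F}$ of the statement. Two facts drive this. First, $\mathcal{F}$ is closed under taking induced subgraphs, since an induced $k$-clique (resp.\ connected non-clique subgraph) of an induced subgraph of $G$ is one for $G$ itself. Second, $\phi(X_G)$ factors over set partitions: starting from the subgraph expansion $X_G=\sum_{S\se E(G)}(-1)^{|S|}p_{\mu(S)}$, where $\mu(S)$ is the multiset of component weights of $(V(G),S)$, and grouping the subsets $S$ by the partition $\pi(S)$ of $V(G)$ into components, one gets $\phi(X_G)=\sum_{\pi}\prod_{B\in\pi}\bigl(c_{w(B)}\,\tau(G[B])\bigr)\,\til{m}_{w(\pi)}$, the sum over set partitions $\pi$ of $V(G)$, where $w(B)$ is the weight of a block, $w(\pi)$ the multiset of block weights, and $\tau(H):=\sum_{S\colon (V(H),S)\text{ connected}}(-1)^{|S|}$. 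On the other hand $X_{\ol{G}}=\sum_\lam|\tn{St}_\lam(\ol{G})|\,\til{m}_\lam=\sum_\pi\prod_{B\in\pi}[\,G[B]\text{ is a clique}\,]\,\til{m}_{w(\pi)}$, because a stable set of $\ol{G}$ is a clique of $G$. Comparing coefficients of $\til{m}_\lam$, the whole statement reduces to the single identity $c_{w(H)}\,\tau(H)=[\,H\text{ is a clique}\,]$ for every connected induced subgraph $H$ of $G$ (hence $H\in\mathcal{F}$); blocks $B$ with $G[B]$ disconnected contribute $0$ on both sides automatically.

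To finish, I would dispatch this identity by cases. If $H$ is a $k$-clique of weight $n$, then $n\in C_k$ because $H$ is an induced $k$-clique of $H\in\mathcal{F}$, and $\tau(H)=\tau(K_k)=(-1)^{k-1}(k-1)!$ — the linear coefficient of the chromatic polynomial of $K_k$ via Whitney's subgraph expansion, which is insensitive to the vertex weights — so $c_{w(H)}\tau(H)=1$. If $H$ is connected but not a clique, of weight $n$, then $n\in C^*$ because $H$ is an induced connected non-clique subgraph of $H\in\mathcal{F}$, so $c_{w(H)}=0$. I expect the only real obstacle to be setting up the factorization step cleanly — making precise that the contribution of each partition is a product of local terms indexed by its blocks, and that the blocks that can occur are exactly the induced subgraphs of $G$ — after which the reduction to the classical value of $\tau(K_k)$ is immediate; this identity also explains why $\phi$ generally fails to be an isomorphism (e.g.\ $c_n=0$ for $n\in C^*$). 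Finally I would verify the two claimed specializations by matching $\mathcal{F}$ and $(c_n)$: $C^*=\emptyset$, $C_k=\{n:v_n=k\}$ gives Proposition \ref{prop:clique_maps}, while $C_1=V$, $C_2=E$, $C_3=C_4=\cdots=\emptyset$, $C^*=C$ gives Proposition \ref{prop:weighted_triangle_free_maps}.
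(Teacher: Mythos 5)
Your proof is correct, but it runs on a different engine than the paper's. Where you expand $X_G$ via the Whitney edge-subset formula $X_G=\sum_{S\se E(G)}(-1)^{|S|}p_{\mu(S)}$ and reduce everything to the signed local quantity $\tau(H)=\sum_{S \tn{ conn.}}(-1)^{|S|}$, with $\tau(K_k)=(-1)^{k-1}(k-1)!$ read off from the linear coefficient of the chromatic polynomial, the paper instead uses the weighted Bernardi--Nadeau expansion (its Lemma \ref{lem:source_components}), so that $(-1)^{|V(G)|-\ell(\lam)}[p_\lam]X_G$ is the cancellation-free count $|\mc{AO}^\lam(G)|$ of acyclic orientations with prescribed source-component weights, and the local factor becomes $|\mc{AO}(K_k)|=(k-1)!$ (acyclic orientations of a clique with fixed source) rather than a signed sum. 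Both arguments then have the identical skeleton: classify graded Hopf morphisms as $p_n\mapsto c_n\til{m}_n$, write both $\phi(X_G)$ and $X_{\ol G}$ as sums over set partitions of $V(G)$ into connected blocks with multiplicative local contributions, and use disjointness of the $C_k$'s and $C^*$ to force every block of weight in $C_k$ to be a $k$-clique. Your route is more self-contained in one sense (it needs only the classical Whitney/Stanley expansion, which holds verbatim for weighted graphs, and avoids the weighted generalization of Bernardi--Nadeau that the paper develops in \S\ref{sec:p-expansion_lemma}), at the cost of working with a signed sum whose evaluation on $K_k$ you must import from the chromatic polynomial; the paper's version keeps every count nonnegative and reuses machinery already needed for Propositions \ref{prop:clique_maps}--\ref{prop:weighted_triangle_free_maps}. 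Two trivial housekeeping points: your choice $c_n=0$ for $n\notin C^*\sqcup C_1\sqcup C_2\sqcup\cdots$ is one valid option (the paper notes any scalar works there, since no induced connected subgraph has such a weight), and in your case analysis the membership $n\in C_k$ or $n\in C^*$ comes from $H$ being an induced subgraph of the ambient $G$ satisfying the hypothesis, which is all that is needed.
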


We introduce relevant background in \S\ref{sec:background} and prove the above results in \S\ref{sec:cochromatic}, \S\ref{sec:hopf_structure}, and \S\ref{sec:hopf_maps}. In \S\ref{sec:p-expansion_lemma}, we prove a key lemma that will be used for many of our proofs in \S\ref{sec:hopf_maps}. In \S\ref{sec:examples}, we give several examples showing the existence or non-existence of maps sending the CSFs of particular graphs to the CSFs of their complements. Then in \S\ref{sec:K-analogues}, we show via similar proofs that the $K$-analogues of many of the above results also hold, where the $K$-analogues involve the \emph{\tb{\tcb{Kromatic symmetric function (KSF)}}} $\ol{X}_G$ of Crew, Pechenik, and Spirkl \cite{crew2023kromatic} and Marberg's Hopf algebra interpretation of the KSF from \cite{marberg2023kromatic}. Finally, in \S\ref{sec:future}, we discuss several potential directions for future research.







\section{Background}\label{sec:background}

\subsection{Chromatic symmetric functions}

A \emph{\tb{\tcb{symmetric function}}} is a power series of bounded degree in a countably infinite set of variables $x_1,x_2,\dots$ that is invariant under any permutation of the variables. We write $\Lam$ for the ring of symmetric functions. A \emph{\tb{\tcb{partition}}} $\lam = \lam_1\dots \lam_\ell$ is a nondecreasing sequence of positive integers $\lam_1\dots \lam_\ell$, $\ell$ is its \emph{\tb{\tcb{length}}}, and $\lam_1,\dots,\lam_\ell$ are its \emph{\tb{\tcb{parts}}}. The \emph{\tb{\tcb{monomial symmetric functions}}} are $$m_\lam := \sum_{n_1,\dots,n_\ell\tn{ distinct}}x_{n_1}^{\lam_1}\dots x_{n_\ell}^{\lam_\ell},$$ and the \emph{\tb{\tcb{augmented monomial symmetric functions}}} are $$\til{m}_\lam := m_\lam \cdot \prod_{i\ge 1} r_i(\lam)!,$$ where $r_i(\lam)$ is the number of parts of $\lam$ equal to $i$. The \emph{\tb{\tcb{power sum symmetric functions}}} are $$p_\lam := \prod_{i=1}^\ell (x_1^{\lam_i} + x_2^{\lam_i} + \dots).$$ Note that $p_n = m_n = \til{m}_n = \sum_{i\ge 1}x_i^n.$ The \emph{\tb{\tcb{elementary symmetric functions}}} are $$e_\lam := \prod_{i=1}^\ell \left(\sum_{n_1,\dots,n_{\lam_i}\tn{ distinct}}x_{n_1}\dots x_{n_{\lam_i}}\right).$$ The $m_\lam$'s, $\til{m}_\lam$'s, $p_\lam$'s, and $e_\lam$'s all form bases for $\Lam.$ For partitions $\lam$ and $\mu$, we write $\lam\sqcup \mu$ for the partition formed by taking all parts of $\lam$ together with all parts of $\mu.$ A basis $b_\lam$ for $\Lam$ indexed by partitions is \emph{\tb{\tcb{multiplicative}}} if it satisfies $b_{\lam\sqcup \mu} = b_\lam b_\mu$ for all partitions $\lam$ and $\mu,$ or equivalently, if $b_\lam = b_{\lam_1}\dots b_{\lam_\ell}$ for all $\lam.$ The $p_\lam$ and $e_\lam$ bases are multiplicative while the $m_\lam$ and $\til{m}_\lam$ bases are not. 

We will write $\til{\Lam}$ for Tusjie's ring with the modified multiplication $$\til{m}_\lam \odot \til{m}_\mu := \til{m}_{\lam \sqcup \mu}$$ that makes the $\til{m}_\lam$'s into a multiplicative basis.

A \emph{\tb{\tcb{weighted graph}}} $G$ consists of a finite set of \emph{\tb{\tcb{vertices}}} $V(G)$, a set of \emph{\tb{\tcb{edges}}} $E(G)$ that are distinct unordered pairs of vertices, and a positive integer valued \emph{\tb{\tcb{weight function}}} $w$ on the vertices. Differing from some other sources, in order to make our notation less cumbersome we will by default assume that all graphs are weighted and will write $G$ instead of $(G,w)$ to mean a weighted graph, thus assuming the weight function $w$ is baked into the definition of $G$. If we call a graph \emph{\tb{\tcb{unweighted}}}, we mean that all vertices have weight 1. When we refer to the \emph{\tb{\tcb{weight of a subset}}} of $V(G)$, we mean the sum of the weights of its vertices. We write $\ol{G}$ for the \emph{\tb{\tcb{complement}}} of $G$, i.e. the graph with the same set of weighted vertices as $G$ but with edges between precisely the vertex pairs that were not connected in $G$. We write $G\sqcup H$ for the disjoint union of the weighted graphs $G$ and $H$ (which has no edges connecting vertices in $G$ to vertices in $H$), and we give its vertices the same weights as their weights in $G$ or $H$. We write $G\odot H$ for the \emph{\tb{\tcb{join}}} formed by connecting all vertices in $G$ to all vertices in $H$, again preserving the weights. For a subset $S\se V(G)$, the \emph{\tb{\tcb{induced subgraph}}} $G|_S$ is the graph with vertex set $S$ whose edges are the edges in $E(G)$ with both endpoints in $S$. A subset of $V(G)$ is \emph{\tb{\tcb{connected}}} if there is a path of edges between any two vertices in the subset. An \emph{\tb{\tcb{acyclic orientation}}} on $G$ assigns a direction to every edge such that there are no directed cycles, and a \emph{\tb{\tcb{source}}} is a vertex with no edges directed towards it. A \emph{\tb{\tcb{clique}}} or \emph{\tb{\tcb{complete graph}}} is a graph with an edge between every pair of vertices.

A \emph{\tb{\tcb{proper coloring}}} is an assignment of a positive-integer valued color to each vertex such that no two adjacent vertices get the same color. The \emph{\tb{\tcb{chromatic symmetric function (CSF)}}} is $$X_G := \sum_{\kappa}\prod_{v\in V(G)}x_{\kappa(v)}^{w(v)},$$ where $\kappa$ ranges over all proper colorings of $G$. 

For $e$ an edge of $G$, we write $G\bs e$ for the graph formed by deleting edge $e$, and $G/e$ for the graph formed by \emph{\tb{\tcb{contracting}}} edge $e$, i.e. removing $e$ and both its endpoints $u$ and $v$, and replacing them with a single vertex $uv$ of weight $w(u)+w(v)$, that is connected to all vertices that were either endpoints of $u$ or endpoints of $v$ in $G$ (without creating any multiple edges). The \emph{\tb{\tcb{contraction-deletion relation}}} of Crew and Spirkl \cite{crew2020deletion} says that $$X_G = X_{G\bs e} - X_{G/e}.$$

\subsection{Hopf algebras}

A \emph{\tb{\tcb{Hopf algebra}}} $A$ over a field $\K$ is an algebra over $\K$ that also has a \emph{\tb{\tcb{coproduct}}} $\Delta: A \to A\otimes A$, a \emph{\tb{\tcb{counit}}} $\eps:A\to \K,$ and an \emph{\tb{\tcb{antipode}}} $S:A\to A$, satisfying certain axioms. Namely, if $\Delta f = \sum f_1\otimes f_2$, the required counit property is $$\sum f_1\eps(f_2) = \sum \eps(f_1)f_2 = f,$$ \emph{\tb{\tcb{coassociativity}}} requires that $$\sum f_1\otimes (\Delta f_2) = \sum (\Delta f_1)\otimes f_2,$$ \emph{\tb{\tcb{compatibility}}} between the algebra and coalgebra structures means that the coproduct and counit are algebra homomorphisms, and the required antipode property is $$\sum f_1S(f_2) = \sum S(f_1)f_2 = \eps(f).$$ The Hopf algebra is \emph{\tb{\tcb{commutative}}} if the product is commutative, and \emph{\tb{\tcb{cocommutative}}} if $\Delta f$ stays the same under switching the order of the terms within each tensor in its sum, i.e. $\Delta f = \sum f_1\otimes f_2 = \sum f_2 \otimes f_1.$ Our Hopf algebras here will all be commutative and cocommutative, which implies that the antipode will be an algebra homomorphism from $A$ to itself. Given a compatible product and coproduct, the antipode can be recursively computed via this formula if it exists, so there can be at most one choice of antipode for a given product and coproduct. An element $f\in A$ is \emph{\tb{\tcb{primitive}}} if $\Delta f = f\otimes 1 + 1\otimes f$, in which case it follows that $S(f) = -f$.

A \emph{\tb{\tcb{morphism of Hopf algebras}}} is a map preserving the product, coproduct, unit, and counit (and the antipode will then automatically also be preserved). A Hopf algebra is \emph{\tb{\tcb{graded}}} if it can be written as $A = \bigoplus_{n\ge 0}A_n$ such that if $f\in A_n$ and $g\in A_m,$ then $fg\in A_{m+n}$, and if $\Delta f = \sum f_1\otimes f_2,$ then $f_2\in A_{n-i}$ whenever $f_1\in A_i$. A graded Hopf algebra is \emph{\tb{\tcb{connected}}} if $A_0\cong \K.$ A Hopf algebra morphism from $A=\bigoplus_{n\ge 0}A_n$ to $B = \bigoplus_{n\ge 0}B_n$ is a \emph{\tb{\tcb{morphism of graded Hopf algebras}}} if it sends $A_n$ to $B_n$ for every $n$.

Hopf algebras can be thought of categorically in terms of certain commutative diagrams, but an alternative way to think about what it means for $A$ to be a Hopf algebra is that the coproduct, counit, and antipode create a group structure on the set of \emph{\tb{\tcb{characters}}} of $A$, i.e. the set of multiplicative linear functions from $A$ to $\K$. The group operation is \emph{\tb{\tcb{convolution}}}, $$(\zeta_1 * \zeta_2)(f) := \sum \zeta_1(f_1)\zeta_2(f_2),$$ where again $\Delta f = \sum f_1\otimes f_2.$ The counit $\eps$ gives this group an identity, and the antipode gives it inverses because $\zeta \circ S$ is the inverse of $\zeta$. A \emph{\tb{\tcb{combinatorial Hopf algebra (CHA)}}} $(A,\zeta_A)$ is a graded connected Hopf algebra $A$ together with a specific choice of character $\zeta_A:A\to\K,$ and a \emph{\tb{\tcb{morphism of combinatorial Hopf algebras}}} $\phi:(A,\zeta_A)\to (B,\zeta_B)$ is a morphism of graded Hopf algebras such that $\zeta_A = \zeta_B\circ \phi.$

The \emph{\tb{\tcb{Hopf algebra of symmetric functions}}} $\Lam$ has coproduct $$\Delta p_n := p_n \otimes 1 + 1\otimes p_n$$ (or equivalently, $\Delta e_n = \sum_{i+j=n} e_i\otimes e_j$), counit $\eps(1)= 1$ and $\eps(x_i)= 0$ for all $i$, and antipode $$S(p_n) = -p_n,$$ since $p_n$ is primitive. The standard way to make $\Lam$ into a combinatorial Hopf algebra is by using the character $\zeta_\Lam:\Lam\to\K$ sending a single variable $x_1$ to 1 and the rest to 0, which implies that $\zeta_\Lam(1)=1$ and $\zeta_\Lam(p_n) = 1$ for all $n$ (and then by multiplicativity, $\zeta_\Lam(p_\lam) = 1$ for all $\lam$). This pair $(\Lam,\zeta_\Lam)$ is a \emph{\tb{\tcb{terminal object}}} in the category of commutative, cocommutative CHAs, which means that every CHA $(A,\zeta_A)$ in the category has a unique morphism to $(\Lam,\zeta_\Lam)$.

The \emph{\tb{\tcb{Hopf algebra of weighted graphs}}} $\WGraphs$ is the algebra over $\K$ freely generated by all connected graphs, with multiplication given by disjoint unions $G\sqcup H$. The coproduct is $$\Delta G := \sum_{S\sqcup T = V(G)}G|_S\otimes G|_T.$$ The unit is the empty graph, and the counit sends the empty graph to 1 and all other graphs to 0. $\WGraphs$ can be thought of as being graded either by number of vertices or by total weight, but we will generally want to think of it as being graded by weight. The CSF connection is that if one chooses the character $\zeta_{\WGraphs}$ sending edgeless graphs to 1 and all other graphs to 0, then the unique morphism $(\WGraphs,\zeta_{\WGraphs})\to (\Lam,\zeta_\Lam)$ is $$G\mapsto X_G.$$ The antipode for $\WGraphs$ can be described by Schmitt's formula from \cite{schmitt1994incidence}, $$S(G) = \sum_{S_1\sqcup \dots \sqcup S_\ell = V(G)}(-1)^{\ell}\ell!\cdot G|_{S_1}\sqcup \dots \sqcup G|_{S_\ell},$$ which a consequence of Takeuchi's more general formula from \cite{takeuchi1971free}. Humpert and Martin \cite{humpert2012incidence} also give a cancellation-free version $$S(G) = \sum_{F\in\mc{F}(G)} (-1)^{\kappa(G_{V,F})}|\mc{AO}(G/F)|\cdot G_{V,F},$$ where $\mc{F}(G)$ is the set of \emph{\tb{\tcb{flats}}} on $G$, meaning subsets $F\se E(G)$ such that $e\in F$ whenever $e\in E(G)$ is an edge between two vertices that are connected by a path of edges in $F$, $G_{V,F}$ is the subgraph of $G$ with vertex set $V(G)$ and edges set $F$, $\kappa(G_{V,F})$ is the number of connected components of $G_{V,F}$, $G/F$ is the graph formed by contracting all edges in $F$, and $\mc{AO}(G/F)$ is the set of acyclic orientations on $G/F.$

\section{Proof of Proposition \ref{prop:cochromatic}: Cochromatic bases}\label{sec:cochromatic}

To show that the only cochromatic bases are the ones coming from complements of cliques, is known from Tsujie \cite{tsujie2018chromatic} that in $\til{\Lam}$, CSFs multiply over joins under the $\odot$ multiplication, so $$X_{\ol{G_{\lam_1}}\odot \dots \odot \ol{G_{\lam_\ell}}} = X_{\ol{G_{\lam_1}}}\odot \dots \odot X_{\ol{G_{\lam_\ell}}}.$$ Thus, the CSFs $X_{\ol{G_\lam}}$ automatically forms a multiplicative basis for $\til{\Lam}$ as long as they form a basis, and more generally, the subspace of $\til{\Lam}$ spanned by the $X_{\ol{G_\lam}}$'s is equivalent to the subalgebra of $\til{\Lam}$ generated by the $X_{\ol{G_n}}$'s. Since the $\til{m}_n$'s are algebraically independent generators for $\til{\Lam}$, the $X_{\ol{G_n}}$'s are algebraically independent generators if and only if every $\til{m}_n$ can be written as a polynomial in the $X_{\ol{G_i}}$'s, and also there are no nontrivial polynomial relationships between the $X_{\ol{G_n}}$'s. 

We know that $[\til{m}_\lam]X_{\ol{G_n}}=|\tn{St}_\lam(\ol{G_n})|,$ the number of partitions of $V(G_n)$ into stable sets whose weights are the parts of $\lam$. Thus, for every nonzero $\til{m}_\lam$ term in $X_{\ol{G_n}}$, the size $|\lam|$ must equal the sum of the vertex weights in $G_n$. Since $\til{m}_n$ cannot be written as a polynomial in smaller $\til{m}_i$'s in $\til{\Lam}$, the only way to get $\til{m}_n$ as a polynomial in the $X_{\ol{G_i}}$'s is if one of them has a $\til{m}_n$ term, so some $G_{i_n}$ must have weight $n$. This automatically forces the graphs $G_{i_1},G_{i_2},\dots$ to be different from each other, since they all have different total weights. Also, in order for $X_{\ol{G_{i_n}}}$ to actually have a $\til{m}_n$ term, the full set of vertices of $\ol{G_{i_n}}$ must form a stable set, meaning it must be edgeless, or equivalently, $G_{i_n}$ must be a clique of total weight $n$. 

Then it follows by induction on $n$ that every $\til{m}_n$ is a polynomial in the $X_{\ol{G_{i_j}}}$'s. To see that, note that if we assume inductively that every $\til{m}_j$ is a polynomial in $X_{\ol{G_{i_1}}},X_{\ol{G_{i_2}}},\dots,X_{\ol{G_{i_j}}}$ for all $j<n,$ then all terms of $X_{\ol{G_{i_n}}}$ except the $\til{m}_n$ term are products of the $\til{m}_j$'s for $j<n,$ and hence are polynomials in $X_{\ol{G_{i_1}}},\dots,X_{\ol{G_{i_{n-1}}}},$ so subtracting all those polynomials from $X_{\ol{G_{i_n}}}$ expresses $\til{m}_n$ as a polynomial in $X_{\ol{G_{i_1}}},\dots,X_{\ol{G_{i_n}}}.$ Thus, since the $\til{m}_n$'s generate $\til{\Lam}$ as an algebra, the $X_{\ol{G_{i_n}}}$'s do as well.

If we had any CSF $X_{\ol{G_n}}$ in the list such that $n$ is not one of the $i_j$'s, then $X_{\ol{G_n}}$ would be expressible as a polynomial in the $X_{\ol{G_{i_j}}}$'s, so the set of $X_{\ol{G_n}}$'s would not be algebraically independent. Thus, we may assume $i_n = n$ for all $n$, so $G_n$ is a clique of weight $n$ for every $n$. 

The algebraic independence of any such sequence $G_1,G_2,\dots$ follows from the fact that $X_{\ol{G_n}}$ contains a $\til{m}_n$ term while no $X_{\ol{G_{i_j}}}$ for $j<n$ does. To see that, note that if we had a minimal degree polynomial relation between the $X_{\ol{G_{i_j}}}$'s with $X_{\ol{G_{i_n}}}$ as the highest weight term showing up, then for the maximal power $k$ such that $(X_{\ol{G_{i_n}}})^k$ divides some of the terms in the polynomial, all terms divisible by $(X_{\ol{G_{i_n}}})^k$ would need to cancel since each such term has a $\til{m}_n^k$ factor while no other terms do. Factoring out $(X_{\ol{G_{i_n}}})^k$ from all such terms would then give a lower degree polynomial relation between $X_{\ol{G_{i_1}}},\dots,X_{\ol{G_{i_{n-1}}}},$ contradicting minimality of the degree of the polynomial. Thus, as long as each $G_n$ is a clique of weight $n$, the $X_{\ol{G_n}}$'s are algebraically independent generators for $\til{\Lam}$ and hence the $X_{\ol{G_\lam}}$'s are a multiplicative basis for $\til{\Lam}$, as claimed. \qed

\section{Hopf algebra structure of \texorpdfstring{$\til{\Lam}$}{Lambda tilde}}\label{sec:hopf_structure}

\subsection{Proof of Proposition \ref{prop:hopf}: \texorpdfstring{$\til{\Lam}$}{Lambda tilde} as a Hopf algebra}\label{subsec:hopf}

To show that $p_\lam\mapsto\til{m}_\lam$ is a morphism of graded Hopf algebras from $\Lam\to\til{\Lam}$, first note that since it is known that the map $G\mapsto X_G$ is a morphism of Hopf algebras from $\WGraphs$ to $\Lam$, the coproduct on $\Lam$ acts on the CSF of any weighted graph $G$ by $$\Delta X_G = \sum_{S\sqcup T=V(G)}X_{G|_S}\otimes X_{G|_T}.$$ In particular, since $p_\lam$ is the CSF of a weighted edgeless graph with weights the parts of $\lam,$ we can write $$\Delta p_\lam = \sum_{\mu\sqcup \nu = \lam} a_{\mu\nu}^\lam (p_\mu \otimes p_\nu),$$ where the scalar $a_{\mu\nu}^\lam$ counts the number of ways to split the parts of $\lam$ into two subsets such that the parts in the first subset form the partition $\mu$ and the parts in the second set form the partition $\nu$. (We count repeated parts of $\lam$ of the same size as being different from each other in this context, since different vertices of the graph count as different even if they have the same weight.) Since the starting graph is a weighted edgeless graph, all of its induced subgraphs are also weighted edgeless graphs, so the terms above are the only ones we get in $\Delta p_\lam.$

By similar reasoning, since $\til{m}_\lam$ is the CSF of a weighted complete graph with vertex weights the parts of $\lam$, all its induced subgraphs are of the form $\til{m}_\mu$ where the parts of $\mu$ are a subset of the parts of $\lam,$ and the terms $\til{m}_\mu\otimes \til{m}_\nu$ in $\Delta \til{m}_\lam$ correspond to ways to split the parts of $\lam$ between $\mu$ and $\nu$, if repeated parts of $\lam$ are considered different from each other. Thus we get $$\Delta \til{m}_\lam = \sum_{\mu\sqcup \nu = \lam} a_{\mu\nu}^\lam(\til{m}_\mu\otimes \til{m}_\nu),$$ where the structure constants $a_{\mu\nu}^\lam$ are the same as for $\Delta p_\lam$.

Now give $\til{\Lam}$ the same coproduct as $\Lam$ and the antipode map $\til{m}_n \mapsto -\til{m}_n$, and consider the linear map $p_\lam \mapsto \til{m}_\lam$ from $\Lam$ to $\til{\Lam}$. This map is invertible, since the $p_\lam$'s and $\til{m}_\lam$'s are both vector space bases for $\Lam,$ and $\til{\Lam}$ is the same as $\Lam$ as a vector space. The map is also an algebra homomorphism, since the $p_\lam$'s are a multiplicative basis for $\Lam$ and the $\til{m}_\lam$'s are a multiplicative basis for $\til{\Lam}$, so $p_\lam p_\mu = p_{\lam\sqcup \mu} \mapsto \til{m}_{\lam\sqcup\mu} = \til{m}_\lam \odot \til{m}_\mu.$ The coproduct on $\Lam$ also has the same structure constants $a_{\mu\nu}^\lam$ with respect to the $p_\lam$ basis as the coproduct on $\til{\Lam}$ has with respect to the $\til{m}_\lam$ basis, and the coproduct in each case is entirely determined through linearity by its action on the basis vectors. The antipode also has the same structure constants, since the antipode on $\Lam$ is $p_n\mapsto -p_n.$

Thus, since the product, coproduct, and antipode are preserved, the $p_\lam\mapsto \til{m}_\lam$ map is a morphism of Hopf algebras as long as $\til{\Lam}$ is a valid Hopf algebra. But $\til{\Lam}$ being a valid Hopf algebra is now immediate from $\Lam$ being a valid Hopf algebra, because the requirements for being a Hopf algebra can all be expressed as certain relationships between the structure constants for multiplication, comultiplication, and the antipode. Hence, since the $p_\lam \mapsto \til{m}_\lam$ map is an invertible linear map from $\Lam$ to $\til{\Lam}$ that preserves all these structure constants, and $\Lam$ is known to be a valid Hopf algebra, it automatically follows that $\til{\Lam}$ is also a valid Hopf algebra, and that the map is an isomorphism of Hopf algebras. 

We can also see that this map preserves the grading, since the grading of both $\Lam$ and $\til{\Lam}$ is by the degree of the polynomials, and the polynomials $p_\lam$ and $\til{m}_\lam$ are homogeneous of the same degree $|\lam|$ for any $\lam$. \qed

\begin{remark}
    We can also make $\til{\Lam}$ isomorphic to $\Lam$ as a CHA by choosing the character $$\zeta_{\til{\Lam}}(\til{m}_\lam):= 1$$ for all $\lam,$ since $\zeta_\Lam(p_\lam):=1$ is the standard character used to make $\Lam$ into a CHA. Then since $(\Lam,\zeta_\Lam)$ is a terminal object in the category of commutative and cocommutative CHA's, so is $(\til{\Lam},\zeta_{\til{\Lam}}).$ We can then associate a character to any Hopf algebra map we define from another Hopf algebra to $\til{\Lam}$ by composing $\zeta_{\til{\Lam}}$ with the Hopf algebra map.
\end{remark}

\subsection{Proof of Proposition \ref{prop:characters}: Characters of \texorpdfstring{$\til{\Lam}$}{Lambda tilde}}

The characters of $\til{\Lam}$ are by definition the multiplicative linear maps from $\til{\Lam}$ to $\K$. Such a map is uniquely determined by its action on the generators $\til{m}_n$ for $\til{\Lam}$ as an algebra, so each character $\zeta_a$ is uniquely determined by the sequence $a_1,a_2,\dots\in \K$ such that $\zeta_a(\til{m}_n) = a_n.$ Since the $\til{m}_\lam$-expansion for the CSF of any weighted graph $G$ is $X_G = \sum_{\lam}|\tn{St}_\lam(G)|\til{m}_\lam$, it follows immediately from linearity and multiplicativity that $$\zeta_a(X_G) = \sum_\lam |\tn{St}_\lam(G)|a_{\lam_1}\dots a_{\lam_\ell},$$ as claimed in Proposition \ref{prop:characters}. For the convolution of two characters, since $\Delta \til{m}_n = \til{m}_n\otimes 1 + 1\otimes \til{m}_n$, it follows that if $\zeta_a$ corresponds to the sequence $a_1,a_2,\dots$ and $\zeta_b$ to the sequence $b_1,b_2,\dots,$ then $$(\zeta_a * \zeta_b)(\til{m}_n) = \zeta_a(\til{m}_n)\zeta_b(1) + \zeta_a(1)\zeta_b(\til{m}_n) = a_n+b_n,$$ as claimed. \qed

\begin{remark}
    Note that the character $\zeta_{\til{\Lam}}$ defined in \S \ref{subsec:hopf} corresponds to the sequence $a_n = 1$ for all $n$, and $\zeta_{\til{\Lam}}(X_G)$ counts all ways to partition $V(G)$ into stable sets. The sequence given by $a_n=k$ for all $n$ is the $k$-fold convolution of $\zeta_{\til{\Lam}}$ with itself, and counts the ways to partition $V(G)$ into stable sets and then assign one of $k$ colors to each stable set.
\end{remark}

\subsection{Proof of Proposition \ref{prop:WGraphs_to_Lam_til}: \texorpdfstring{$\til{\Lam}$}{Lambda tilde} as a quotient of \texorpdfstring{$\WGraphs$}{WGraphs}}

Since we know that $\WGraphs$ and $\til{\Lam}$ are both valid Hopf algebras, to show that $G\mapsto X_{\ol{G}}$ is a morphism of Hopf algebras, it suffices to show that it preserves both the product and coproduct. 

The map preserves the product because the product in $\WGraphs$ is $G\sqcup H$, and $$X_{\ol{G\sqcup H}} = X_{\ol{G}\odot\ol{H}} = X_{\ol{G}}\odot X_{\ol{H}},$$ where the first equality follows because the complement of the disjoint union of two graphs is the same as the join of their complements, and the second equality follows because we know from Tsujie \cite{tsujie2018chromatic} that CSFs multiply over joins in $\til{\Lam}.$

To see that the coproduct is preserved, recall that the coproduct on $\WGraphs$ is 
\begin{equation}\label{eqn:graph_coproduct}
    \Delta G = \sum_{S\sqcup T = V(G)} G|_S\otimes G|_T.
\end{equation} 
Since the map $\WGraphs \to \Lam$ given by $G\mapsto X_G$ is known to be a Hopf algebra morphism, in $\Lam$ we have $$\Delta X_G = \sum_{S\sqcup T = V(G)} X_{G|_S}\otimes X_{G|_T},$$ and the above equality also holds in $\til{\Lam}$, since we defined the coproduct on $\til{\Lam}$ to be the same as the coproduct on $\Lam.$ The same equation holds if we use $\ol{G}$ in place of $G$: $$\Delta X_{\ol{G}} = \sum_{S\sqcup T = V(\ol{G})} X_{\ol{G}|_S} \otimes X_{\ol{G}|_T}.$$ Now the key fact is that each induced subgraph of the complement $\ol{G}$ is the complement of the corresponding induced subgraph of $G$, i.e. $\ol{G}|_S = \ol{G|_S}.$ Thus, we can equivalently write 
\begin{equation}\label{eqn:lam_tilde_coproduct}
    \Delta X_{\ol{G}} = \sum_{S\sqcup T=V(G)} X_{\ol{G|_S}}\otimes X_{\ol{G|_T}}.
\end{equation} 
But now the terms of (\ref{eqn:lam_tilde_coproduct}) are precisely the images under our $G\mapsto X_{\ol{G}}$ map of the corresponding terms from (\ref{eqn:graph_coproduct}), hence the coproduct is preserved, so the map is a morphism of Hopf algebras.

For the kernel, it is known that the contraction-deletion relation $X_G = X_{G\bs e}-X_{G/e}$ of Crew and Spirkl \cite{crew2020deletion} for CSFs of weighted graphs can be used to get between any two graphs with equal CSF, since it can be used to reduce the CSF of any graph to its $p$-expansion. Now note that since $e$ is not an edge of $G\bs e$, $e$ is an edge of $\ol{G\bs e}$. Also, $\ol{G} = (\ol{G\bs e})\bs e$, since $\ol{G}$ is the same as $\ol{G\bs e}$ except that $\ol{G}$ does not contain the edge $e$ while $\ol{G\bs e}$ does. Similarly, $(\ol{G\bs e})/e = \ol{G/e}$, since both $\ol{G\bs e}$ and $G$ have $e$ as an edge to start (meaning it can be contracted), contracting it gives new graphs with the same vertex set, and the resulting edges of the new graphs are the same. Thus, plugging in $\ol{G\bs e}$ in place of $G$ in the contraction-deletion relation gives $$X_{\ol{G\bs e}} = X_{(\ol{G\bs e})\bs e} - X_{(\ol{G\bs e})/e} = X_{\ol{G}}-X_{\ol{G/e}}.$$ Moving the $X_{\ol{G\bs e}}$ term to the right, it follows that $G - G\bs e - G/e$ is in the kernel of our $G\mapsto X_{\ol{G}}$ map. To see that these relations generate the kernel, we recall that $\til{\Lam}$ is equivalent to $\Lam$ as a vector space, so since the set of linear relations $X_G=X_{G\bs e}-X_{G/e}$ are enough to reduce the CSF of any graph to its $p$-expansion, the equivalent set of linear relations $X_{\ol{G\bs e}}=X_{\ol{G}}-X_{\ol{G/e}}$ are also enough to reduce any graph to its $p$-expansion, so the relations $G-G\bs e - G/e$ generate the kernel. \qed

\begin{remark}
    In terms of characters, the character we get on $\WGraphs$ by composing $\zeta_{\til{\Lam}}$ with this map is the map sending $G$ to the number of ways to partition $V(G)$ into cliques.
\end{remark}

\section{A weighted power sum expansion using acyclic orientations}\label{sec:p-expansion_lemma}

For our next few proofs, we will make use of a weighted analogue of the $p_\lam$-expansion for CSFs from \cite{bernardi2020combinatorial}:

\begin{lemma}[cf. Bernardi-Nadeau \cite{bernardi2020combinatorial}, Proposition 5.3]\label{lem:source_components}
    Given a fixed ordering on $V(G)$,
    $$X_G = (-1)^{|V(G)|}\sum_{\gamma \in\mc{AO}(G)}(-1)^{\ell(\lambda(\gamma))}p_{\lam(\gamma)},$$ where $\mc{AO}(G)$ is the set of acyclic orientations on $G,$ and $\lam(\gamma)$ is the partition whose parts are the weights of the source components of $\gamma.$
\end{lemma}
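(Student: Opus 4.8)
The plan is to reduce this weighted statement to the unweighted version of Bernardi--Nadeau \cite{bernardi2020combinatorial} by a weight-splitting trick, rather than re-running their combinatorial argument from scratch. Given a weighted graph $G$, I would construct an auxiliary \emph{unweighted} graph $\widehat{G}$ by replacing each vertex $v$ of weight $w(v)$ with a clique on $w(v)$ vertices, and joining two such cliques completely whenever the corresponding vertices were adjacent in $G$ (a ``blow-up''). The first step is to check that $X_G$ and $X_{\widehat G}$ are related in a controlled way: a proper coloring of $\widehat G$ must use distinct colors within each blown-up clique, and monomials of $X_{\widehat G}$ that use distinct colors on each clique but can be grouped so that the exponent pattern matches $\prod_v x_{\kappa(v)}^{w(v)}$ correspond to proper colorings of $G$ only after we quotient by the internal color choices. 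Cleaner is to instead work directly in the $p$-expansion: I would apply the \emph{unweighted} Bernardi--Nadeau formula to $\widehat G$ and then push it down. Acyclic orientations of $\widehat G$ restrict to acyclic orientations of $G$; the source components of an acyclic orientation $\widehat\gamma$ of $\widehat G$ that is ``clique-consistent'' (orients each blown-up clique as a transitive tournament) project onto source components of the induced orientation $\gamma$ of $G$, and the weight of a source component of $\gamma$ equals the number of $\widehat G$-vertices in the corresponding source component of $\widehat\gamma$. Tracking the signs $(-1)^{|V|}$ and $(-1)^{\ell(\lambda)}$ through this correspondence, together with the fact that $p_k$ for $\widehat G$-components of total size $k$ within one $G$-vertex collapse correctly, should yield the stated formula.

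Alternatively — and this may be the cleaner route to write up — I would prove the lemma directly by induction using the contraction-deletion relation $X_G = X_{G\backslash e} - X_{G/e}$, mirroring how one proves the unweighted $p$-expansion. Fix the vertex ordering. The base case is an edgeless weighted graph with vertex weights $\mu_1,\dots,\mu_k$: it has a unique acyclic orientation (no edges), its single source component partition is $\mu$ itself, $\ell(\lambda(\gamma)) = k = |V(G)|$, so the right side is $(-1)^k(-1)^k p_\mu = p_\mu = X_G$, as desired. For the inductive step on the number of edges, pick an edge $e = uv$ with $u$ before $v$ in the ordering; acyclic orientations of $G$ split into those orienting $e$ as $u\to v$ versus $v\to u$. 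The orientations of $G\backslash e$ biject with pairs of compatible orientations of $G$... here one uses the standard fact that $\mc{AO}(G\backslash e)$ is in bijection with the disjoint union $\mc{AO}(G)\sqcup \mc{AO}(G/e)$, where the $\mc{AO}(G/e)$ part corresponds to orientations of $G\backslash e$ that become cyclic when $e$ is added in either direction. I would then check that this bijection is compatible with source-component weight partitions: contracting $e$ merges $u$ and $v$ into one vertex of weight $w(u)+w(v)$, which is exactly what happens to the relevant source component, so $\lambda$ on the $G/e$ side matches $\lambda$ on the corresponding $G\backslash e$ side, while the length drops by one — accounting for the sign flip between $X_{G\backslash e}$ and $-X_{G/e}$ together with the change in $(-1)^{|V|}$ (since $|V(G/e)| = |V(G)| - 1$).

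The main obstacle is the bookkeeping of source components under these operations: when $e$ is deleted or contracted, a vertex that was a source (or part of a source component) may cease to be one, or two source components may merge, and I need to verify that the weight \emph{partition} $\lambda(\gamma)$ transforms exactly as the signs demand. In particular, in the deletion/contraction bijection I must confirm that for an orientation $\gamma'$ of $G\backslash e$ that stays acyclic after adding $e$ (so corresponds to $\gamma \in \mc{AO}(G)$), the source components — and hence $\lambda$ — are literally unchanged, which requires that adding the single edge $e$ in its forced direction cannot create a new source or destroy one in a way that changes the multiset of component weights; and for the cyclic case, I must confirm the source-component structure of $G/e$ matches that of $G\backslash e$ with $u,v$'s components fused. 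Once these two compatibility checks are done, the signs bookkeep automatically and the induction closes. I would present the contraction-deletion induction as the main proof and relegate the blow-up reduction to a remark, since it gives intuition for why the formula is the ``right'' weighted analogue of Bernardi--Nadeau.
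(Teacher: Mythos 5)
Your main route (deletion--contraction induction) is genuinely different from the paper's proof, which follows Bernardi--Nadeau's heaps-of-pieces argument: the paper writes $X_G$ as a coefficient extraction from a product of trivial-heap generating functions, applies Viennot's identity $T(\mathbf{v})=\exp(-P(-\mathbf{v}))$ to convert trivial heaps (stable sets) into pyramids, and reads off the acyclic-orientation/source-component expansion from the pyramids with distinct vertices. However, your proposal as written has a genuine gap exactly at the point you flag as ``the main obstacle.'' Writing $R(G)$ for the right-hand side of the lemma, the induction requires $R(G)=R(G\backslash e)-R(G/e)$, i.e.\ a bijection $\mathcal{AO}(G)\leftrightarrow\mathcal{AO}(G\backslash e)\sqcup\mathcal{AO}(G/e)$ under which $\lambda(\gamma)$ is preserved on the first part and preserved-after-merging on the second. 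You state the counting bijection backwards ($\mathcal{AO}(G)$, not $\mathcal{AO}(G\backslash e)$, decomposes as the disjoint union; and no orientation of $G\backslash e$ becomes cyclic for \emph{both} directions of $e$, since that would force a directed cycle already in $G\backslash e$ --- the two-extension case is when \emph{neither} direction creates a cycle). More seriously, the compatibility claim you need --- that for $\gamma'\in\mathcal{AO}(G\backslash e)$ reinserting $e$ in its forced (or chosen) direction leaves the source components ``literally unchanged'' --- is false in general: if $u$ and $v$ lie in different source components of $\gamma'$ and the added edge points from a vertex in an earlier-sourced component toward $v$, then $v$ (and everything reachable from it) can get absorbed into the earlier component, changing the multiset $\lambda$. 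Making the induction close requires choosing, for each two-extension $\gamma'$, \emph{which} extension is paired with $\gamma'$ and which with an element of $\mathcal{AO}(G/e)$, as a function of the source-component structure, and then verifying the $\lambda$-bookkeeping in each case; none of that is done, and it is essentially the entire content of the proof.

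Your first route (blowing up each weight-$w$ vertex into a $w$-clique) has a more fundamental problem and should be dropped rather than relegated to a remark: $X_{\widehat{G}}$ records monomials $x_{i_1}\cdots x_{i_w}$ with \emph{distinct} colors on each blown-up clique, whereas $X_G$ needs $x_i^{w}$, so there is no specialization or coefficient extraction recovering $X_G$ from $X_{\widehat{G}}$. On the $p$-expansion side the same issue reappears: every acyclic orientation of $\widehat{G}$ is automatically ``clique-consistent,'' but its source components need not respect the blown-up cliques (a component can contain a proper subset of a clique's vertices), so $\lambda(\widehat{\gamma})$ does not project onto weights of $G$-source-components, and the needed cancellations are not addressed. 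If you want to avoid redoing Bernardi--Nadeau's argument, the paper's actual adaptation is quite light: the only changes to their heap computation are substituting $x_i^{w(v_j)}v_j$ for $x_iv_j$ in the trivial-heap generating function and observing that each pyramid on a vertex set $S$ then contributes $p_{w(S)}$ rather than $p_{|S|}$.
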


The source components are defined as follows:

\begin{definition}[Bernardi-Nadeau \cite{bernardi2020combinatorial}]
    Given a fixed order on $V(G)$, the \emph{\tb{\tcb{source components}}} of an acyclic orientation $\gamma$ are recursively constructed as follows, where we start with all vertices being unused:
\begin{enumerate}
    \item Take the first unused vertex to be the source of the new source component (and mark it as used).
    \item Add all unused vertices reachable from the source by directed paths to its source component.
    \item Repeat until all vertices are assigned to a source component.
\end{enumerate}
(It is not obvious that the multiset of partitions $\lam(\gamma)$ is independent of the choice of ordering on $V(G)$, but it turns out that it is.)
\end{definition}

Since Bernardi and Nadeau only prove Lemma \ref{lem:source_components} for unweighted graphs but we will want to use the weighted version, we will show how to modify their argument to work for weighted graphs. Some of our notation will be slightly different from theirs.

Following \cite{bernardi2020combinatorial}, we will use the language of the graphical interpretation of the \emph{\tb{\tcb{heaps of pieces}}} of Cartier and Foata \cite{cartier2006problemes} and Viennot \cite{viennot2006heaps}. A \emph{\tb{\tcb{heap}}} on $G$ is formed by blowing up each vertex of $G$ into a clique (possibly empty and possibly a single vertex) such that two vertices are adjacent in the heap if they come from either the same or adjacent vertices of $G$. Repeated vertices in a heap are not considered distinguishable from each other, in the sense that two heaps are not considered different if they can be obtained from each other by permuting repeated copies of the same vertex. 
    
A \emph{\tb{\tcb{pyramid}}} is a heap with a unique source, and we write $\mathscr{P}$ for the set of pyramids on $G$. If $G$ has $n$ vertices and we assign a sequence of variables $\mathbf{v}=(v_1,\dots,v_n)$ to the vertices, then $$P(\mathbf{v}) := \sum_{\tb{h}\in\mathscr{P}}\frac{\mathbf{v}^{\mathbf{h}}}{|\mathbf{h}|}$$ is the generating series for all pyramids on $G$, where $\mathbf{v}^{\mathbf{h}}$ denotes the monomial such that the exponent on $v_i$ is the number of times vertex $v_i$ appears in the pyramid $\mathbf{h}$, and $|\mathbf{h}|$ is the total number of vertices in $\mathbf{h}$, counted with multiplicity. 
    
A \emph{\tb{\tcb{trivial heap}}} is a heap with no edges, and we write $\mathscr{T}$ for the set of trivial heaps on $G$. Since repeated copies of a vertex are automatically connected to each other, a trivial heap can contain at most one copy of each vertex, and the vertices in contains cannot have any edges between them, so a trivial heap is the same thing as a stable set. The generating series for trivial heaps is $$T(\mathbf{v}) := \sum_{\mathbf{h}\in \mathscr{T}}\mathbf{v}^{\mathbf{h}},$$ where $\mathbf{v}^{\mathbf{h}}$ denotes the monomial where the exponent on $v_i$ is the number of copies of vertex $i$ in the heap. 

\begin{proof}[Proof of Lemma \ref{lem:source_components}]

    If $G$ has weight function $w,$ its CSF is equal to $$X_G(x_1,x_2,\dots) = [v_1\dots v_n]\prod_{i\ge 1}T(x_i^{w(v_1)}v_1, x_i^{w(v_2)}v_2,\dots,x_i^{w(v_n)}v_n),$$ because $T(x_i^{w(v_1)}v_1, x_i^{w(v_2)}v_2,\dots,x_i^{w(v_n)}v_n)$ is the generating series for ways to assign color $i$ to all vertices in some stable set, where the exponent on $x_i$ is the sum of the weights of the vertices it is assigned to, and then multiplying over colors and taking the coefficient of $v_1\dots v_n$ corresponds to assigning exactly one color to each vertex. Now, Viennot shows in \cite{viennot2006heaps} that $$T(\mathbf{v}) = \exp(-P(-\mathbf{v})).$$ Plugging that in gives 
    \begin{align*}
        X_G &= [v_1\dots v_n]\prod_{i\ge 1}\exp(-P(-x_i^{w(v_1)}v_1,\dots,-x_i^{w(v_n)}v_n)) \\
        &= [v_1\dots v_n]\exp\left(-\sum_{i\ge 1}P(-x_i^{w(v_1)}v_1,\dots,-x_i^{w(v_n)}v_n)\right).
    \end{align*}
    For a subset $S\subseteq V(G),$ write $\mathscr{P}_S$ for the set of pyramids on $G$ using only the vertices in $S$ and using them each exactly once, and $$P_S(\tb{v}):= \sum_{\mathbf{h}\in \mathscr{P}_S}\frac{\mathbf{v}^{\mathbf{h}}}{|\mathbf{h}|} = \frac{|\mathscr{P}_S|}{|S|}\prod_{v\in S}v$$ for the generating series for those pyramids. These are the only sorts of pyramids that can contribute to a $v_1\dots v_n$ term (since all other pyramids have repeated vertices), so we can rewrite our desired coefficient as 
    \begin{align*}
         X_G &= [v_1\dots v_n]\prod_{S\se V(G)}\exp\left(-\sum_{i\ge 1}P_S(-x_i^{w(v_1)}v_1,\dots,-x_i^{w(v_n)}v_n)\right) \\
        &= [v_1\dots v_n]\prod_{S\se V_G}\exp\left(-\frac{|\mathscr{P}_S|}{|S|}(-1)^{|S|}p_{w(S)}\prod_{v\in S}v\right).
    \end{align*}
    To get the second line from the first, note that for all terms in $P_S(-x_i^{w(v_1)}v_1,\dots,-x_i^{w(v_n)}v_n)$, the sign is $(-1)^{|S|}$ and the power of $x_i$ is $x_i^{w(S)},$ where $|S|$ is the number of vertices in $S$ and $w(S)=\sum_{v\in S}w(v)$ is the total weight of the vertices in $S$. Then summing $x_i^{w(S)}$ over all the $x_i$ variables is equivalent to taking the power sum $p_{w(S)}.$ Expanding the exponential power series gives
    \begin{align*}
        X_G &= [v_1\dots v_n] \prod_{S\se V(G)} \left(\sum_{j\ge 1}\frac 1{j!}\left((-1)^{|S|-1}\frac{|\mathscr{P}_S|}{|S|}p_{w(S)}\prod_{v\in S}v\right)^j\right).
    \end{align*}
    Taking multiple pyramids on the same vertex set $S$ would give us duplicate copies of the vertices in $S$, so we can actually ignore all terms with $j>1$ and write $$X_G = [v_1\dots v_n]\prod_{S\se V(G)}\left(1 + (-1)^{|S|-1}\frac{|\mathscr{P}_S|}{|S|}p_{w(S)}\prod_{v\in S}v\right).$$ Now to get a $v_1\dots v_n$ term, we need to take some set of subsets $S_1,\dots, S_\ell$ with $S_1\sqcup \dots \sqcup S_\ell = V(G)$, so $$X_G = \sum_{S_1\sqcup \dots \sqcup S_\ell = V(G)}(-1)^{|S_1|+\dots+|S_\ell|-\ell}\frac{|\mathscr{P}_{S_1}|}{|S_1|}\dots \frac{|\mathscr{P}_{S_\ell}|}{|S_\ell|}p_{w(S_1)\dots w(S_\ell)}.$$ The sign is $(-1)^{|S_1|+\dots+|S_\ell|-\ell} = (-1)^{|V(G)|-\ell}=(-1)^{|V(G)|}(-1)^\ell.$ It is known that the pyramids in $\mathscr{P}_{S_i}$ are actually evenly split between having each of the $|S_i|$ vertices as the source, so $|\mathscr{P}_{S_i}|/|S_i|$ is the number of pyramids on $S_i$ with the first vertex as the source, i.e. the number of ways to make $S_i$ into a source component. Then we can uniquely associate each such set of acyclic orientations making $S_1,\dots, S_\ell$ into source components with a full acyclic orientations $\gamma$ on $G$, by directing all edges between source components from the source component with a later source toward the source component with an earlier source. Then $\lam(\gamma)=w(S_1)\dots w(S_\ell)$ is the partition whose weights are the source components of $\gamma$, and $\ell=\ell(\lam(\gamma))$ is the number of source components. Putting this together gives the power sum expansion in Lemma \ref{lem:source_components}.
\end{proof}

\section{Graph complement maps from \texorpdfstring{$\Lam$}{Lambda} to \texorpdfstring{$\til{\Lam}$}{Lambda tilde}}\label{sec:hopf_maps}

\subsection{Proof of Proposition \ref{prop:hopf_complement_maps}: Graph complement maps from \texorpdfstring{$\Lam$}{Lambda} to \texorpdfstring{$\til{\Lam}$}{Lambda tilde}}

To show that $X_{G\lam}\mapsto X_{\ol{G_\lam}}$ is a Hopf algebra map $\Lam\to\til{\Lam}$ as long as all induced subgraphs of $G_n$ are a disjoint union of $G_i$'s, the reasoning is similar to our reasoning for Propositions \ref{prop:hopf} and \ref{prop:WGraphs_to_Lam_til}. Given any graphs $G_1,G_2,\dots$ generating a chromatic basis for $\Lam$, the map $X_{G_\lam}\mapsto X_{\ol{G_\lam}}$ automatically gives a well-defined linear map $\Lam\to\til{\Lam}$, since the $X_{G_\lam}$'s form a basis for $\Lam$ (though not necessarily a surjective linear map). This linear map is also automatically an algebra homomorphism, since $$X_{G_\lam}X_{G_\mu} = X_{G_\lam \sqcup G_\mu} = X_{G_{\lam \sqcup \mu}} \mapsto X_{\ol{G_{\lam \sqcup \mu}}} = X_{\ol{G_\lam}\odot \ol{G_\mu}} = X_{\ol{G_\lam}} \odot X_{\ol{G_\mu}}.$$ It also respects the grading, since the grading is by degree, and $X_{G_\lam}$ and $X_{\ol{G_\lam}}$ are homogeneous polynomials of the same degree $|\lam|$ for each $\lam.$

To see that it is also a coalgebra homomorphism, note that by our assumption that all induced subgraphs of every $G_n$ are disjoint unions of graphs from the family, it also follows that all induced subgraphs of every $G_\lam$ are of the form $G_\mu$ for some $\mu$, since an induced subgraph of $G_\lam$ is the disjoint union of induced subgraphs of each of the $G_{\lam_i}$'s. Thus, we can write each coproduct $\Delta X_{G_\lam}$ in terms of CSFs of graphs from the family: $$\Delta X_{G_\lam} = \sum_{\mu,\nu} a_{\mu\nu}^\lam(X_{G_\mu}\otimes X_{G_\nu}),$$ where the structure constant $a_{\mu\nu}^\lam$ counts the ways to number of ways to partition the vertex set of $G_\lam$ into two sets $S$ and $T$ with $S\sqcup T = V(G_\lam)$ such that $(G_\lam)|_S = G_\mu$ and $(G_\lam)|_T = G_\nu.$ Note that unlike for $\Delta p_\lam$ and $\Delta \til{m}_\lam$, we will not necessarily have $\lam = \mu\sqcup \nu$ for all terms $G_\mu \otimes G_\nu$ showing in the coproduct, since each $G_{\lam_i}$ may be a graph with multiple vertices and so may be split between $G_\mu$ and $G_\nu$ instead of being a single vertex of weight $\lam_i$ that must be assigned to either $G_\mu$ or $G_\nu$.

Then, since the complement $\ol{G_\mu}$ of any induced subgraph $G_\mu$ of $G_\lam$ is isomorphic to the corresponding induced subgraph of the complement $\ol{G_\lam}$, whenever we have a vertex partition $S\sqcup T = V(G_\lam)$ such that $(G_\lam)|_S = G_\mu$ and $(G_\lam)|_T = G_\nu,$ we will also have $(\ol{G_\lam})|_S = \ol{G_\mu}$ and $(\ol{G_\lam})|_T = \ol{G_\nu}.$ Thus, we can write $$\Delta X_{\ol{G_\lam}} = \sum_{\mu,\nu}a_{\mu\nu}^\lam(X_{\ol{G_\mu}}\otimes X_{\ol{G_\nu}})$$ with the exact same structure $a_{\mu\nu}^\lam$ as for $\Delta X_{G_\lam}$. Thus, the coproduct is preserved for the basis vectors and hence for all linear combinations of them, so the map is an isomorphism of graded Hopf algebras. \qed

\begin{remark}
    The character on $\Lam$ associated to this map would be the one sending $X_{G_\lam}$ to the number of ways to partition $V(G_\lam)$ into cliques for all graphs $G_\lam$ in our family.
\end{remark}

\subsection{Proof of Proposition \ref{prop:clique_maps}: Graph complement maps for cliques}\label{subsec:clique_maps_proof}

To prove Proposition \ref{prop:clique_maps}, we claim that the algebra homomorphism $\Lam \to \til{\Lam}$ given by $$p_n \mapsto \frac{(-1)^{v_n-1}}{(v_n-1)!}\til{m}_n$$ sends $X_G$ to $X_{\ol{G}}$ for every clique $G$ such that all its induced subgraphs of weight $n$ have exactly $v_n$ vertices for each $n$. 

By Lemma \ref{lem:source_components}, the coefficient $(-1)^{|V(G)|-\ell(\lam)}[p_\lam]X_G$ counts the ways to partition the vertices into parts of weights $\lam_1,\dots,\lam_\ell$ and then choose an acyclic orientation so that those parts become source components.

Since we are assuming the vertex ordering is already fixed, we only have one choice for the source of each source component (namely, the first vertex within its component). We also have only one choice for the directions of edges between different source components, since they must always be directed from the source component with a later source toward the one with an earlier source, or else additional vertices would have been added to the source component with the earlier source. 

Thus, the only thing to choose is the orientations of the edges within each source component. Since the graph is a weighted clique, all its induced subgraphs (and in particular the source components) are also cliques. Choosing an acyclic orientation on a clique with a given source is equivalent to choosing an ordering of all the vertices in the clique except the source, since all edges from the source are required to point outward, and an acyclic orientation on the remaining vertices is equivalent to an ordering of those vertices. For a clique on $v_n$ vertices, this can be done in $(v_n-1)!$ ways.

Thus, $(-1)^{|V(G)|-\ell(\lam)}[p_\lam]X_G$ is equal to the number of ways to partition $V(G)$ into parts of weights $\lam_1,\dots,\lam_\ell$, multiplied by $(v_{\lam_1}-1)!\dots (v_{\lam_\ell}-1)!.$ The sign is $(-1)^{|V(G)|-\ell(\lam)}=(-1)^{v_{\lam_1}-1}\dots(-1)^{v_{\lam_\ell}-1}$, because $v_{\lam_1}+\dots+v_{\lam_\ell}=|V(G)|$ since every vertex of $G$ gets used by exactly one source component. Since by multiplicativity our map sends $$p_\lam \mapsto \frac{(-1)^{v_{\lam_1}-1}\dots (-1)^{v_{\lam_\ell-1}}}{(v_{\lam_1}-1)!\dots (v_{\lam_\ell}-1)!}\til{m}_\lam,$$ the coefficient of $\til{m}_\lam$ in the image of $X_G$ will simply be the number of ways to partition $V(G)$ into parts of weights $\lam_1,\dots,\lam_\ell$. But since $\ol{G}$ is edgeless and has all the same vertex weights as $G$, this coefficient is equal to $|\tn{St}_\lam(\ol{G})|$, since all vertex partitions of $\ol{G}$ into parts of weights $\lam_1,\dots,\lam_\ell$ are automatically partitions into \emph{stable} sets of those weights. Thus, the $\til{m}_\lam$-expansion of the image of $X_G$ is precisely the $\til{m}_\lam$-expansion of $X_{\ol{G}}$, so the image of $X_G$ is $X_{\ol{G}}$.

We can see that this map preserves grading, since $p_\lam$ and $\til{m}_\lam$ have the same degree, and the scalar factors $(-1)^{v_n-1}/(v_n-1)!$ do not change the degrees. The map is also invertible since all of these scalar factors are nonzero. 

Finally, it is a morphism of Hopf algebras because $\Delta p_n = p_n\otimes 1 + 1\otimes p_n$, and also $\Delta (a_n \til{m}_n) = (a_n\til{m}_n)\otimes 1 + 1\otimes (a_n\til{m}_n)$ for any scalar $a_n$, by linearity of the coproduct. Since the coproduct behaves the same on the generators $p_n$ as on their images $a_n\til{m}_n$, it also behaves the same by multiplicativity on all the $p_\lam$'s as on their images, and then by linearity it behaves the same on all elements of $\Lam$ as on their images in $\til{\Lam}$. Hence, the coproduct is preserved and the map is an isomorphism of Hopf algebras. \qed

\bigskip

\begin{remark}
    Since the scalar multiples of $p_n = \til{m}_n$ are in fact the \emph{only} primitive elements of degree $n$ in both $\Lam$ and $\til{\Lam}$, the graded Hopf algebra morphisms from $\Lam$ to $\til{\Lam}$ are in fact \emph{precisely} the maps that send each $p_n$ to a scalar multiple $a_n\til{m}_n$ of $\til{m}_n,$ and the \emph{isomorphisms} are precisely the maps of this form such that $a_n \ne 0$ for all $n$.
\end{remark}

\subsection{Proof of Proposition \ref{prop:unweighted_triangle_free_maps}: Graph complement map for triangle-free graphs}
\counterwithin{theorem}{subsection}
\setcounter{theorem}{0}

We claim that the Hopf algebra morphism from $\Lam$ to $\til{\Lam}$ sending the CSFs of all triangle-free graphs to the CSFs of their complements is the algebra homomorphism given by $$p_n\mapsto \begin{cases}
    \til{m}_1 & \tn{for }n=1, \\
    -\til{m}_2 & \tn{for }n=2, \\
    0 & \tn{for }n\ge 3.
\end{cases}$$
This map is a graded Hopf algebra morphism by the last paragraph of the previous proof. 

To see that it sends the CSFs of triangle-free graphs to the CSFs of their complements, we will check that it maps the $p_\lam$-expansion of $X_G$ to the $\til{m}_\lam$-expansion of $X_{\ol{G}}$ for every unweighted triangle-free graph $G$. 

In $\ol{G}$, there are no stable sets with 3 or more vertices, since $G$ does not contain any cliques on 3 more more vertices, so $X_{\ol{G}}$ only contains $\til{m}_\lam$ terms where all parts of $\lam$ are 1 or 2, and if $|V(G)|=n,$ then $[\til{m}_{2^k 1^{n-2k}}]X_{\ol{G}}$ counts the number of partial matchings on $G$ with exactly $k$ edges. 

By Lemma \ref{lem:source_components}, the corresponding coefficient $(-1)^{k}[p_{2^k 1^{n-2k}}]X_G$ counts the number of acyclic orientations on $G$ with $k$ source components of size 2 and the rest of size 1. The sign is $(-1)^k$ because $(-1)^{n-\ell(2^k 1^{n-2k})} = (-1)^{n-(n-k)} = (-1)^k.$ A source component of size 2 is an edge with the first vertex directed towards the second vertex, and to ensure that the set of source component is precisely a given set of nonoverlapping edges together with the remaining vertices as singletons, we just need to direct all edges from source components with later sources to source components with earlier sources, as before. Thus, acyclic orientations with $k$ source components of size 2 and the rest of size 1 are in 1-to-1 correspondence with $k$-edge partial matchings on $G$, so $(-1)^k[p_{2^k 1^{n-k}}]X_G = [\til{m}_{2^k 1^{n-2k}}]X_{\ol{G}}$.

If $G$ has connected components with 3 or more vertices, then those connected components can also be made into source components by directing all their edges away from the first vertex in the component, so $X_G$ will also have $p_\lam$ terms where $\lam$ has parts of size 3 or more. However, those terms will all get sent to 0 by our map. Only the terms of the $p_\lam$-expansion with all parts equal to 1 or 2 will get sent to nonzero $\til{m}_\lam$ terms, and as we have seen, those $p_\lam$ terms match up precisely with all of the $\til{m}_\lam$ terms of $X_{\ol{G}}$. It follows that our map sends $X_G$ to $X_{\ol{G}},$ as claimed. \qed

\bigskip

In fact, we can say something more general than Proposition \ref{prop:unweighted_triangle_free_maps}. Notice that the explanation for why $(-1)^k[p_{2^k1^{n-2k}}]X_G=[\til{m}_{2^k1^{n-2k}}]X_{\ol{G}}$ holds does not require $G$ to be triangle-free. The only reason we require $G$ to be triangle-free in Proposition \ref{prop:unweighted_triangle_free_maps} is to eliminate the monomial terms involving parts of size at least 3 arising in the triangle-containing cases, but we could simply define a restriction map on $\til{\Lam}$ that sends all those terms to zero, and our result will hold for all graphs. Thus, we get the following modification of Proposition \ref{prop:unweighted_triangle_free_maps}:
\begin{prop}\label{prop:diagram_commutes}
Define the following Hopf algebra morphisms: \begin{align*}\phi:\Lam&\rightarrow\til{\Lam}&\theta:\til{\Lam}&\rightarrow\til{\Lam}\\p_n&\mapsto\begin{cases}\til{m}_1&\text{for }n=1\\-\til{m}_2&\text{for }n=2\\0&\text{for }n\geq3\end{cases}&\til{m}_n&\mapsto\begin{cases}\til{m}_1&\text{for }n=1\\\til{m}_2&\text{for }n=2\\0&\text{for }n\geq3\end{cases}\end{align*} and let $\til{\Lam}_{1,2}$ be the common image of these two maps (i.e. the subalgebra of $\til{\Lam}$ generated by $\til{m}_1$ and $\til{m}_2$). Then $\phi(X_G)=\theta(X_{\ol{G}})$ for all $G$. In other words, the following diagram commutes, where $(\Graphs,\sqcup)$ and $(\Graphs,\odot)$ denote the algebra of unweighted graphs under the disjoint union and join operations respectively:
\begin{center} \begin{tikzcd}
{(\Graphs,\sqcup)} \arrow[rr, "G\mapsto\overline{G}"'] \arrow[d, "\tn{CSF}"'] &                             & {(\Graphs,\odot)} \arrow[d, "\tn{CSF}"] \\
\Lambda \arrow[rd, "\phi"]                                                              &                             & \widetilde{\Lambda} \arrow[ld, "\theta"']         \\
                                                                                        & {\widetilde{\Lambda}_{1,2}} &                                                  
\end{tikzcd} \end{center} \end{prop}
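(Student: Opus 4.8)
The plan is to prove the one substantive assertion, $\phi(X_G)=\theta(X_{\ol G})$ for every unweighted graph $G$, by expanding both sides in the $\til m_\lam$ basis and checking that they agree term by term. That $\phi$ and $\theta$ are graded Hopf algebra morphisms is immediate from the remark following the proof of Proposition \ref{prop:clique_maps}: $\phi$ sends each primitive $p_n$ to a scalar multiple of the primitive $\til m_n$ (namely $\til m_1$, $-\til m_2$, or $0$), and $\theta$ sends each primitive $\til m_n$ to a scalar multiple of itself; and both maps have image the subalgebra of $\til\Lam$ generated by $\til m_1$ and $\til m_2$, which is the claimed $\til\Lam_{1,2}$.

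For the right-hand side, write $n=|V(G)|$ and start from $X_{\ol G}=\sum_\lam |\tn{St}_\lam(\ol G)|\,\til m_\lam$. Since $\theta$ is an algebra homomorphism for $\odot$ with $\til m_n\mapsto 0$ for $n\geq 3$ and $\til m_1,\til m_2$ fixed, it annihilates every $\til m_\lam$ with a part of size at least $3$ and fixes each $\til m_{2^k 1^{n-2k}}$. A partition of $V(\ol G)$ into $k$ stable sets of size $2$ and $n-2k$ singletons is exactly a choice of $k$ pairwise disjoint edges of $G$, because a $2$-element stable set of $\ol G$ is a non-edge of $\ol G$, i.e. an edge of $G$; hence $|\tn{St}_{2^k 1^{n-2k}}(\ol G)|=\mu_k(G)$, the number of $k$-edge matchings of $G$, and $\theta(X_{\ol G})=\sum_{k\geq 0}\mu_k(G)\,\til m_{2^k 1^{n-2k}}$.

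For the left-hand side I would apply $\phi$ to the weighted power sum expansion of Lemma \ref{lem:source_components} (for any fixed ordering of $V(G)$): $X_G=(-1)^n\sum_{\gamma\in\mc{AO}(G)}(-1)^{\ell(\lam(\gamma))}p_{\lam(\gamma)}$. Since $\phi(p_\lam)=0$ as soon as $\lam$ has a part of size $\geq 3$, only those $\gamma$ whose source components all have size $1$ or $2$ survive; for such a $\gamma$ with $k$ components of size $2$ we get $\lam(\gamma)=2^k 1^{n-2k}$, $\ell(\lam(\gamma))=n-k$, and $\phi(p_{2^k 1^{n-2k}})=(-\til m_2)^{\odot k}\odot\til m_1^{\odot(n-2k)}=(-1)^k\til m_{2^k 1^{n-2k}}$, so the signs combine to $(-1)^n(-1)^{n-k}(-1)^k=1$ and $\gamma$ contributes $+\til m_{2^k 1^{n-2k}}$. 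Exactly as in the proof of Proposition \ref{prop:unweighted_triangle_free_maps}, an acyclic orientation whose source components are a prescribed set of $k$ disjoint edges (each oriented from its first to its second endpoint) together with the remaining vertices as singletons is uniquely determined, since all other edges must point from the later-sourced component toward the earlier-sourced one; hence such orientations biject with the $k$-edge matchings of $G$. Therefore $\phi(X_G)=\sum_{k\geq 0}\mu_k(G)\,\til m_{2^k 1^{n-2k}}=\theta(X_{\ol G})$, which is precisely the commutativity of the displayed diagram (the down-left path from $G$ being $\phi\circ\tn{CSF}$ and the right-then-down-then-left path being $\theta\circ\tn{CSF}\circ(G\mapsto\ol G)$).

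I do not anticipate a real obstacle here: the argument is essentially a repackaging of the proof of Proposition \ref{prop:unweighted_triangle_free_maps}, the single new ingredient being that $\theta$ restores exactly the stable-set terms of degree $\geq 3$ that the triangle-free hypothesis there allowed us to discard. The only points needing a little care are the sign bookkeeping in the $p$-expansion and the observation that $\theta$ acts as the identity on each $\til m_{2^k 1^{n-2k}}$, which holds because $\theta$ is multiplicative for $\odot$ and fixes $\til m_1$ and $\til m_2$.
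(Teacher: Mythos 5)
Your proof is correct and takes essentially the same route as the paper: the paper justifies this proposition by observing that the identity $(-1)^k[p_{2^k1^{n-2k}}]X_G=[\til{m}_{2^k1^{n-2k}}]X_{\ol{G}}$ from the proof of Proposition \ref{prop:unweighted_triangle_free_maps} never used triangle-freeness, and that $\theta$ simply discards the $\til{m}_\lam$ terms with a part of size at least $3$ exactly as $\phi$ discards the corresponding $p_\lam$ terms. Your sign bookkeeping and the matching/acyclic-orientation bijection agree with the paper's.
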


\subsection{Nonexistence of unweighted triangle-containing families except cliques}\label{subsec:nonweighted_nonexistence}
Intuitively speaking, the reason Proposition \ref{prop:unweighted_triangle_free_maps} holds is that for any partition $\lam$ whose parts have size 1 or 2, the coefficient of $p_\lam$ in $X_G$ tells us the same information as that of $\til{m}_\lam$ in $X_{\ol{G}}$. So one might naturally wonder whether this could still hold, at least for some graphs $G$, if we allow parts of size 3 and beyond. However, the following result shows that this is generally not possible:

\begin{prop}\label{prop:c_equals_zero_or_half}
Suppose that a Hopf algebra morphism $\phi:\Lam\rightarrow\til{\Lam}$ sends $p_3$ to $c\til{m}_3$, and let $F$ be a family of non-weighted connected graphs with at least one $n$-vertex graph for each positive integer $n$. Then $\phi(X_G)=X_{\ol{G}}$ for every $G$ in $F$ only if one of the following holds: \begin{itemize}
\item $c=0$ and $F$ is a family of triangle-free graphs;
\item $c=\frac12$ and $F$ is the family of cliques.
\end{itemize} \end{prop}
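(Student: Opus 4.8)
The plan is to use Lemma \ref{lem:source_components} to translate the condition $\phi(X_G) = X_{\ol G}$ into a purely combinatorial statement about acyclic orientations, and then analyze what that forces when $G$ contains a triangle. Recall from the remark after the proof of Proposition \ref{prop:clique_maps} that any graded Hopf algebra morphism $\phi:\Lam\to\til\Lam$ must send $p_n\mapsto a_n\til m_n$ for scalars $a_n$; write $a_1 = \phi(p_1)/\til m_1$, $a_2 = \phi(p_2)/\til m_2$, $a_3 = c$, and so on. First I would pin down $a_1$ and $a_2$: applying $\phi$ to $X_G$ for a single vertex forces $a_1 = 1$, and applying it to $X_G$ for $G$ a single edge (so $\ol G$ is two isolated vertices, $X_{\ol G} = \til m_{11} + \til m_2$, while $X_G = p_1^2 - p_2$ — being careful with $p$-expansion signs via Lemma \ref{lem:source_components}) forces $a_2 = -1$. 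So the only free parameter among the low-degree data is $c = a_3$.

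Next I would expand $\phi(X_G)$ and $X_{\ol G}$ in the $\til m_\lam$ basis for a general connected unweighted $G$ in $F$ with $n$ vertices, using Lemma \ref{lem:source_components} for the former and $[\til m_\lam]X_{\ol G} = |\mathrm{St}_\lam(\ol G)|$ for the latter. Since $a_1 = 1$, $a_2 = -1$, the coefficient of $\til m_{2^k1^{n-2k}}$ already matches on both sides for every $G$ (this is exactly the computation in the proof of Proposition \ref{prop:unweighted_triangle_free_maps}, which did not use triangle-freeness). The new content is the coefficient of $\til m_\lam$ when $\lam$ has a part of size $\ge 3$. The cleanest place to extract information is the part of size exactly $3$: comparing the coefficient of $\til m_{3,1^{n-3}}$, the right side counts triangles of $G$ (stable $3$-sets of $\ol G$ disjoint from the rest, i.e. triangles in $G$), while by Lemma \ref{lem:source_components} the left side is $a_3$ times the number of acyclic orientations of $G$ whose source components are one fixed-weight-$3$ component and $n-3$ singletons — and, as in the clique proof, with the vertex order fixed the weight-$3$ source component can be realized on a given connected $3$-vertex induced subgraph $H$ in exactly (number of acyclic orientations of $H$ with the first vertex as source) ways: that is $2$ if $H$ is a triangle and $1$ if $H$ is a path on $3$ vertices (choosing which of the two non-source vertices comes first, resp. forced). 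Hence $[\til m_{3,1^{n-3}}]\phi(X_G) = a_3\,(2t(G) + p(G))$ where $t(G)$ is the number of triangles and $p(G)$ the number of induced paths $P_3$ in $G$, while $[\til m_{3,1^{n-3}}]X_{\ol G} = t(G)$.

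So the condition at this level of the expansion is the identity $a_3(2t(G) + p(G)) = t(G)$ for every $G\in F$. If $F$ contains a triangle-containing graph, then some $G\in F$ has $t(G) > 0$; if moreover that $G$ is not a clique it has a vertex triple inducing $P_3$ as well — actually I would argue more carefully: I'd first handle the case $t(G) = 0$ for all $G\in F$, which (combined with the requirement that $F$ hit every vertex count and the fact that a connected triangle-free graph on $n$ vertices exists) is consistent only with forcing all of $F$ triangle-free, and then plugging triangle-free graphs into the identity gives $a_3 p(G) = 0$; since $F$ contains a connected triangle-free graph on $3$ vertices, namely $P_3$, with $p(P_3) = 1$, we get $a_3 = c = 0$. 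In the case where some $G\in F$ has $t(G) > 0$: evaluating the identity on a triangle ($G = K_3$, $t = 1$, $p = 0$) gives $2c = 1$, so $c = \tfrac12$; then the identity becomes $2t(G) + p(G) = 2t(G)$, i.e. $p(G) = 0$, for every $G \in F$. A connected graph with no induced $P_3$ is a clique, so every $G\in F$ is a clique; conversely $F$ must contain an $n$-vertex clique for every $n$, which is exactly the family of cliques. This dichotomy is precisely the two cases in the statement, and I'd close by noting that in each case Propositions \ref{prop:unweighted_triangle_free_maps} and \ref{prop:clique_maps} respectively show the "only if" is an "if and only if," so the constraint is tight.

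The main obstacle I anticipate is the careful bookkeeping in the two edge cases of the case analysis — in particular, justifying rigorously that "$t(G) = 0$ for all $G\in F$" is actually forced whenever we are not in the clique case (as opposed to a mixed family with some triangle-free and some triangle-containing members), and that the identity $a_3(2t+p) = t$ across all of $F$ cannot be satisfied by a non-clique triangle-containing family with some cleverly balanced counts; this requires either exhibiting enough small graphs in $F$ (using the "one $n$-vertex graph for each $n$" hypothesis for $n = 3$) or pushing the comparison to higher-degree coefficients $\til m_{4,1^{n-4}}$, $\til m_{3,3,1^{n-6}}$, etc. I expect degree $3$ suffices given the hypothesis on $F$, but the write-up needs to make the logical structure of "only if" airtight rather than just exhibiting the two families that work.
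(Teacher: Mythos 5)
Your proposal is correct and follows essentially the same route as the paper: both extract the single identity $c\,(2\,\#K_3(G)+\#P_3^{\mathrm{ind}}(G))=\#K_3(G)$ from the coefficient of $\til{m}_{3,1^{n-3}}$ via Lemma \ref{lem:source_components}, then use the $3$-vertex member of $F$ to force $c\in\{0,\tfrac12\}$ and propagate (the paper phrases the count with non-induced paths as $c\,\#P_3=(c+1)\,\#K_3$, which is the same identity). The bookkeeping worry you flag resolves exactly as you anticipate: the $3$-vertex member of $F$ is either $P_3$ or $K_3$, and whichever it is pins down $c$ and hence forces the corresponding condition on every other member, so no mixed family can survive.
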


Proposition \ref{prop:c_equals_zero_or_half} implies that if we require the graphs to be unweighted, then the triangle-free families from Proposition \ref{prop:unweighted_triangle_free_maps} and the clique family from Proposition \ref{prop:clique_maps} with $v_n=n$ for all $n$ are the only families of graphs that can satisfy Proposition \ref{prop:hopf_complement_maps}.

\begin{proof}
First we observe that $[p_{31^{n-3}}]X_G=\#P_3(G)-\#K_3(G)$, where $\#P_3(G)$ and $\#K_3(G)$ are the number of not necessarily induced 3-vertex paths in $G$ and triangles in $G$, respectively. To see this, note that the left side counts acyclic orientations of type $(3,1^{n-3})$, which can only arise in two ways: the first is when the source component of size 3 corresponds to an induced path, and the second is when it corresponds to an induced triangle. In the former case, there is one possible way to assign an acyclic orientation (since we have to direct all other vertices away from the earliest vertex), so each induced path contributes 1 to the $[p_{31^{n-3}}]X_G$ coefficient on the left, and it also contributes 1 to the $\#P_3(G)$ term on the right. Each induced triangle contributes 2 to the left side $[p_{31^{n-3}}]X_G$, because there are 2 ways to make a triangle into a source component, as both edges from the first vertex must be directed away from that vertex, while the other edge can go in either direction. An induced triangle contains 3 (non-induced) paths and one induced triangle, so each induced triangle also contributes 2 to the right side $\#P_3(G)-\#K_3(G)$, because it contributes 3 copies of $P_3$ minus one copy of $K_3$. Summing over all connected induced 3-vertex subgraphs, we see that the two sides match.

We have $[\til{m}_{31^{n-3}}]X_{\ol{G}}=|\tn{St}_{31^{n-3}}(\ol{G})|=\#K_3(G)$, and since $\phi(p_3)=c\til{m}_3,$ in order to have $\phi(X_G) = X_{\ol{G}}$, we need $c [p_{31^{n-3}}]X_G=[\til{m}_{31^{n-3}}]X_{\ol{G}}$, so 
\begin{equation}\label{eqn:p3_m3_coeffs}
c\cdot (\#P_3(G) - \#K_3(G)) = \#K_3(G) \implies c\cdot\#P_3(G)=(c+1)\cdot\#K_3(G)
\end{equation}
needs to be satisfied for all graphs $G$ in the family $F$. In particular, this must be true for the 3-vertex member of said family. However, there are only two connected 3-vertex graphs up to isomorphism, namely the triangle and the path, whose corresponding pairs $(\#P_3(G),\#K_3(G))$ have values $(3,1)$ and $(1,0)$ respectively, corresponding to the values $c=\frac12$ and $c=0$. If $c=0$, then by (\ref{eqn:p3_m3_coeffs}), we need $\#K_3(G)=0$ for every $G\in F$, i.e. every $G\in F$ must be triangle-free. On the other hand, if $c=\frac12$, then we need $\#P_3(G)=3\cdot\#K_3(G)$ for every $G\in F$. However, since every triangle contains three 3-vertex paths, we get the inequality $\#P_3(G)\geq3\cdot\#K_3(G)$, with equality if and only if every 3-vertex path in $G$ is part of a triangle, which holds if and only if $G$ is a clique. Thus, the only two possibilities are that all graphs in $F$ are triangle-free or all graphs in $F$ are cliques, as claimed.\end{proof}

\begin{remark} The Hopf algebra morphisms described in the proofs of Propositions \ref{prop:clique_maps}  and \ref{prop:unweighted_triangle_free_maps} correspond respectively to the $c=\frac12$ and $c=0$ cases. \end{remark}

\subsection{Proof of Proposition \ref{prop:weighted_triangle_free_maps}: Maps for weighted triangle-free graphs}

To send the CSFs of our weighted triangle-free graphs with vertex weights from $V$, edge weights from $E$, and other connected subgraph weights in $C$ to the CSFs of their complements, the required map is
$$p_n \mapsto 
\begin{cases}
    \til{m}_n & \tn{for }n\in V, \\
    -\til{m}_n & \tn{for }n\in E, \\
    0 & \tn{for }n\in C.
\end{cases}$$
If there are positive integers $n \not \in V\sqcup E\sqcup C$, then we can send $p_n$ to any scalar multiple of $\til{m}_n$ we please for those values of $n.$ This map is then a morphism of graded Hopf algebras by the same argument as for Proposition \ref{prop:unweighted_triangle_free_maps} in \S\ref{subsec:clique_maps_proof}, since it sends each $p_n$ to a scalar multiple of $\til{m}_n.$

We need to show that for any weighted triangle-free graph $G$ with all its vertex weights from $V$, all its edge weights from $E$, and all its larger connected subgraph weights from $C$, the $p_\lam$-expansion for $X_G$ gets mapped to the $\til{m}_\lam$-expansion for $X_{\ol{G}}$.

In $\ol{G}$, all stable sets contain at most 2 vertices, since all cliques in $G$ contain at most 2 vertices. Thus, all stable set partitions of $\ol{G}$ have each part corresponding to either an edge of $G$ or to a single vertex of $G$. Thus, the terms in the $\til{m}_\lam$-expansion of $G$ are all of the form $\til{m}_{e_1 \dots e_k v_1 \dots v_\ell}$ for some $e_1,\dots,e_k\in E$ and $v_1,\dots,v_\ell\in V,$ and such a coefficient $[\til{m}_{e_1\dots e_k v_1\dots v_\ell}]X_{\ol{G}}$ counts the ways to partition $V(G)$ into $\ell$ singleton vertices of weights $v_1,\dots,v_\ell$ together with $k$ edges of weights $e_1,\dots,e_k.$ (We are not necessarily assuming here that $e_1\ge \dots \ge e_k \ge v_1\dots v_\ell$, so when we write $\til{m}_{e_1\dots e_k v_1\dots v_\ell}$, we mean $\til{m}_\lam$ where $\lam$ is the partition formed by arranging $e_1,\dots,e_k,v_1,\dots,v_\ell$ in decreasing order.)

The corresponding coefficient $(-1)^{k}[p_{e_1\dots e_k v_1\dots v_\ell}]X_G$ counts the ways to partition $V(G)$ into source components of weights $e_1,\dots,e_k,v_1,\dots,v_\ell.$ Since the sets $V,E,$ and $C$ do not overlap, the only way for the source components to have those sizes is if the weight $v_1,\dots,v_\ell$ source components are all singleton vertices and the weight $e_1,\dots,e_k$ source components are all edges. The sign is $(-1)^k$ because if $G$ is partitioned into $k$ edges together with $\ell$ singletons, we must have $2k+\ell=|V(G)|,$ so $$(-1)^{|V(G)|-\ell(e_1\dots e_k v_1\dots v_\ell)} = (-1)^{k+2\ell - (k+\ell)} = (-1)^k.$$ As in the proof of Proposition \ref{prop:unweighted_triangle_free_maps}, once we have chosen the set of edges and singletons, there is exactly one way to make them into source components (namely, by directing each edge that is a source component from its first vertex to its second vertex, and then directing all other edges from source components with later sources to source components with earlier sources). Thus, the coefficient $(-1)^k[p_{e_1\dots e_k v_1\dots v_\ell}]X_G$ is exactly equal to the number of $k$-edge matchings on $G$ where the edges have weights $e_1,\dots,e_k$ and the remaining singletons have weights $v_1,\dots,v_\ell,$ so it is equal to $[\til{m}_{e_1\dots e_k v_1\dots v_\ell}]X_{\ol{G}}.$

All other terms in the $p_\lam$-expansion of $X_G$ include a source component using 3 or more vertices, whose weight thus comes from $C$ since source components must be connected. Thus, all other $p_\lam$ terms of $X_G$ get sent to 0, while the ones of the form $p_{e_1\dots e_k v_1\dots v_\ell}$ get sent precisely to the $\til{m}_\lam$-terms of $X_{\ol{G}}$. It follows that $X_G$ gets mapped to $X_{\ol{G}}$, as claimed. \qed

\bigskip

The argument above essentially immediately gives us a weighted analogue of Proposition \ref{prop:diagram_commutes}, with $\til{\Lam}_{V\sqcup E}$ instead of $\til{\Lam}_{1,2}$ and the appropriate subalgebra of $\WGraphs$ instead of $\Graphs$:
\begin{prop}
Given disjoint subsets $V,E,C\se\{1,2,3,\dots\}$, define the Hopf algebra morphisms \begin{align*}\phi:\Lam&\rightarrow\til{\Lam}&\theta:\til{\Lam}&\rightarrow\til{\Lam}\\p_n&\mapsto\begin{cases}\til{m}_n&\text{for }n\in V\\-\til{m}_n&\text{for }n\in E\\0&\text{for }n\in C\end{cases}&\til{m}_n&\mapsto\begin{cases}\til{m}_n&\text{for }n\in V\\\til{m}_n&\text{for }n\in E\\0&\text{for }n\in C\end{cases}\end{align*} and let $\til{\Lam}_{V\sqcup E}$ be the common image of these two maps (i.e. the subalgebra of $\til{\Lam}$ generated by the $\til{m}_n$'s for $n\in V\sqcup E$). Then $\phi(X_G)=\theta(X_{\ol{G}})$ for all weighted graphs $G$ whose vertex weights all come from $V$, edge weights from $E$ and connected induced subgraph weights with 3 or more vertices from $C$. In other words, if $\WGraphs_{V,E,C}$ denotes the subalgebra of all such weighted graphs, then the following diagram commutes.
\begin{center} \begin{tikzcd}
{(\WGraphs_{V,E,C},\sqcup)} \arrow[rr, "G\mapsto\overline{G}"'] \arrow[d, "\tn{csf}"'] &                             & {(\WGraphs_{V,E,C},\odot)} \arrow[d, "\tn{csf}"] \\
\Lambda \arrow[rd, "\phi"]                                                              &                             & \widetilde{\Lambda} \arrow[ld, "\theta"']         \\
                                                                                        & {\widetilde{\Lambda}_{V\sqcup E}} &                                                  
\end{tikzcd} \end{center} \end{prop}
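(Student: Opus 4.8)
The plan is to combine the argument used to prove Proposition \ref{prop:weighted_triangle_free_maps} with the ``restriction map'' device that already upgraded Proposition \ref{prop:unweighted_triangle_free_maps} to Proposition \ref{prop:diagram_commutes}. First I would check that $\phi$ and $\theta$ are well-defined morphisms of graded Hopf algebras. For $\phi$ this is immediate, since it sends each primitive generator $p_n$ to a scalar multiple of the primitive $\til{m}_n$, exactly as in the last paragraph of the proof of Proposition \ref{prop:clique_maps}. For $\theta$ the same reasoning applies, because $\til{m}_n$ is primitive in $\til{\Lam}$ (indeed $\til{\Lam}\cong\Lam$ as Hopf algebras by Proposition \ref{prop:hopf}, so $\til{m}_n$ spans the degree-$n$ primitives) and $\theta$ sends $\til{m}_n$ to $\til{m}_n$ or to $0$. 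Here I would also fix the harmless convention that both maps send the generators indexed by $n\notin V\sqcup E\sqcup C$ to $0$, so that the common image is literally the subalgebra $\til{\Lam}_{V\sqcup E}$ spanned by the $\til{m}_\lam$ whose parts all lie in $V\sqcup E$; any other choice on those indices affects neither the diagram nor the argument.

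Next I would compute $\theta(X_{\ol G})$. Expanding $X_{\ol G}=\sum_\lam|\tn{St}_\lam(\ol G)|\,\til{m}_\lam$ and using that $\theta$ is multiplicative with $\theta(\til{m}_n)=\til{m}_n$ for $n\in V\sqcup E$ and $0$ otherwise, only the terms $\til{m}_\lam$ all of whose parts lie in $V\sqcup E$ survive. The key point, using the weight hypotheses on $G$ together with the disjointness of $V,E,C$, is that a stable set of $\ol G$ is a clique of $G$: a clique on $\ge 3$ vertices is a connected induced subgraph, hence has weight in $C$; an edge has weight in $E$; a single vertex has weight in $V$. Consequently a surviving part of weight in $V$ must be a single vertex, a surviving part of weight in $E$ must be an edge, and no surviving stable-set partition can use a part of size $\ge 3$. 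Therefore $\theta(X_{\ol G})=\sum\til{m}_{e_1\cdots e_k v_1\cdots v_\ell}$, the sum ranging over partial matchings of $G$, where $e_1,\dots,e_k$ are the weights of the matched edges and $v_1,\dots,v_\ell$ the weights of the unmatched vertices.

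Then I would compute $\phi(X_G)$ via Lemma \ref{lem:source_components}, writing $X_G=(-1)^{|V(G)|}\sum_{\gamma\in\mc{AO}(G)}(-1)^{\ell(\lam(\gamma))}p_{\lam(\gamma)}$. By the same clique-size analysis (a source component on $\ge 3$ vertices spans a connected subgraph, so has weight in $C$), the term for $\gamma$ survives $\phi$ precisely when every source component of $\gamma$ has at most $2$ vertices, i.e.\ the source components form a partial matching of $G$ together with isolated vertices; and, exactly as in the proofs of Propositions \ref{prop:unweighted_triangle_free_maps} and \ref{prop:weighted_triangle_free_maps}, each such matching-plus-singletons structure is realized by exactly one acyclic orientation. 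For such a $\gamma$ with $k$ size-$2$ components and $\ell$ size-$1$ components we have $|V(G)|=2k+\ell$, $\ell(\lam(\gamma))=k+\ell$, and $\phi(p_{\lam(\gamma)})=(-1)^k\til{m}_{\lam(\gamma)}$ (one factor $-1$ for each matched edge, whose weight lies in $E$), so the total coefficient is $(-1)^{2k+\ell}(-1)^{k+\ell}(-1)^k=1$. Hence $\phi(X_G)=\sum\til{m}_{\lam(M)}$ over partial matchings $M$ of $G$, which agrees term by term with $\theta(X_{\ol G})$. Identifying the horizontal maps of the square with $G\mapsto\ol G$ and the vertical maps with the CSF then gives commutativity of the diagram.

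I expect the only real work to be bookkeeping: arguing carefully, from the disjointness of $V,E,C$ and the three weight hypotheses, that cliques (equivalently source components) of sizes $1$, $2$, and $\ge 3$ have weights lying in $V$, $E$, and $C$ respectively, so that no contribution ``leaks'' between the three cases, and reconciling the convention on generators indexed outside $V\sqcup E\sqcup C$ with the stated description of the common image $\til{\Lam}_{V\sqcup E}$. The bijection between partial matchings and the surviving acyclic orientations is already established in the earlier proofs and can simply be cited rather than repeated.
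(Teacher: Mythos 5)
Your proposal is correct and follows essentially the same route as the paper: the paper derives this proposition directly from its proof of Proposition \ref{prop:weighted_triangle_free_maps}, using Lemma \ref{lem:source_components} to match the surviving $p_\lam$-terms of $X_G$ (source components of size $\le 2$, i.e.\ partial matchings plus singletons) with the surviving $\til{m}_\lam$-terms of $X_{\ol G}$ (stable-set partitions of $\ol G$ into cliques of $G$ of size $\le 2$), with $\phi$ and $\theta$ killing the contributions from weights in $C$ on each side. Your extra care about the convention for generators indexed outside $V\sqcup E\sqcup C$ is a reasonable (and, for the literal description of the common image, necessary) tidying of a point the paper leaves implicit, but it does not change the argument.
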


\subsection{Proof of Proposition \ref{prop:the_culmination}: Maps for given clique weights}\label{subsec:culmination_proof}
Our desired map sending all graphs $G$ with all size $k$ clique weights from $C_k$ and all non-clique connected subgraph weights from $C^*$ is $$p_n\mapsto\begin{cases}\dfrac{(-1)^{k-1}}{(k-1)!}\til{m}_n&\text{for }n\in C_k,\\[8pt]0&\text{for }n\in C^*.\end{cases}$$ For any $n\in \mathbb{N}\bs (C^*\sqcup C_1 \sqcup C_2 \sqcup \dots),$ we can send $p_n$ to any scalar multiple $a_n\til{m}_n.$

Again, we shall show that the $p_\lam$-expansion of $X_G$ maps to the $\til{m}_\lam$-expansion of $X_{\ol{G}}$. For each integer partition $\lambda\vdash\tn{weight}(G)$, let $V^\lam(G)$ denote the collection of all set partitions $\pi$ of the vertex set $V(G)$ into $\ell(\lam)$ connected parts such that the weight of the part $\pi_i$ is $\lam_i$. Recall that $$[\til{m}_\lam]X_{\ol{G}}=|\tn{St}_\lam(\ol{G})|=\#\{\pi\in V^\lam(G)\mid\pi\text{ is a stable partition of }\ol{G}\},$$ and observe that an induced subgraph of $G$ is a clique if and only if the induced subgraph on the same vertex set in $\ol{G}$ is a stable set. Hence $\pi\in V^\lam(G)$ is a stable partition of $\ol{G}$ if and only if the induced subgraphs $G|_{\pi_i}$ of $G$ are cliques, and so $[\til{m}_\lam]X_{\ol{G}}=|S^\lam(G)|$, where $$S^\lam(G)=\{\pi\in V^\lam(G)\mid G|_{\pi_i}\text{ is a clique for all }i\}.$$ Now if $\lam_i\in C^*$, then $G|_{\pi_i}$ is not a clique for any $\pi\in V^\lam(G)$, and if no $\lam_i$'s are in $C^*$, then for each $i$ we either have $\lam_i\in C_{k_i}$ for some $k_i$ or $\lam_i\in\mathbb N\setminus(C^*\sqcup C_1\sqcup C_2\sqcup\cdots)$. If there exists some $i$ where the latter holds, then $V^\lam(G)=\emptyset$ (and therefore $S^\lam(G)=\emptyset$ too) since there is no connected component of $G$ with weight $\lam_i$. Otherwise, every $\pi\in V^\lam(G)$ has $G|_{\pi_i}=K_{k_i}$, whence $\pi\in S^\lam(G)$ too. As such, $$S^\lam(G)=\begin{cases}V^\lam(G)&\text{if } \lam_i\notin C^*\text{ for all }i,\\\emptyset&\text{if }\lam_i\in C^*\text{ for some }i.\end{cases}$$ Recall that the only graded Hopf algebra morphisms $\Lam\rightarrow\til{\Lam}$ are those where $p_n$ maps to a scalar multiple $a_n\til{m}_n$ of $\til{m}_n$. As such, choosing $a_n=0$ whenever $n\in C^*$ ensures by multiplicativity that the coefficients of $\til{m}_\lam$ are zero as required in the cases where $\lam_i\in C^*$ for some $i$. Conversely, if $S^\lam(G)\neq\emptyset$, then there exist positive integers $k_1,\dots,k_{\ell(\lam)}$ such that $\lam_i\in C_{k_i}$ for all $i$. Once again, by Lemma \ref{lem:source_components}, $$(-1)^{|V(G)|-\ell(\lam)}[p_\lam]X_G=|\mc{AO}^\lam(G)|,$$ where $\mc{AO}^\lam(G)$ is the set of all acyclic orientations on $G$ where the source components have weights $\lam_i$. As argued in the proof of Proposition 1.6, we have $$|\mc{AO}^\lam(G)|=\sum_{\pi\in V^\lam(G)}\prod_{i=1}^{\ell(\lam)}|\mc{AO}(G|_{\pi_i})|,$$ since for each choice $\pi$ of a partition of $V(G)$ into source components with weights $\lam$, we can assign acyclic orientations onto each source component independently of each other, and once that is done, it uniquely determines the acyclic orientation of the whole graph $G$ since the directions of the edges between different source components must be directed from the later source component to the earlier one. But if $\lam_i\in C_{k_i}$, then the source component (i.e. the induced subgraph $G|_{\pi_i}$) with this weight is isomorphic to the clique $K_{k_i}$. Since this is true for all $\pi\in V^\lam(G)$ and $V^\lam(G)=S^\lam(G)$ in such a case, we get $$|\mc{AO}^\lam(G)|=|S^\lam(G)|\prod_{i=1}^{\ell(\lam)}|\mc{AO}(K_{k_i})|.$$ As argued in \S\ref{subsec:clique_maps_proof}, $|\mc{AO}(K_{k_i})|=(k_i-1)!$, and thus $$(-1)^{|V(G)|-\ell(\lam)}[p_\lam]X_G=|S^\lam(G)|\prod_{i=1}^{\ell(\lam)}(k_i-1)!=[\til{m}_\lam]X_{\ol{G}}\cdot\prod_{i=1}^{\ell(\lam)}(k_i-1)!.$$ So if $p_n$ maps to $(-1)^{k-1}\til{m}_n/(k-1)!$ whenever $n\in C_k$, then when $\lam_i\in C_{k_i}$ for each $i$ we have $$[p_\lam]X_G\cdot p_\lam=[p_\lam]X_G\cdot\prod_{i=1}^{\ell(\lam)}p_{\lam_i}\mapsto(-1)^{|V(G)|-\ell(\lam)}[\til{m}_\lam]X_{\ol{G}}\left(\prod_{i=1}^{\ell(\lam)}\frac{(-1)^{k_i-1}}{(k_i-1)!}\til{m}_{\lam_i}\right)\prod_{i=1}^{\ell(\lam)}(k_i-1)!=[\til{m}_\lam]X_{\ol{G}}\cdot\til{m}_\lam,$$ as required. Lastly, just like in Proposition \ref{prop:weighted_triangle_free_maps}, if there exist integer(s) $n\in\mathbb N\setminus(C^*\sqcup C_1\sqcup C_2\sqcup\cdots)$, then we can map these $p_n$'s to any multiple of $\til{m}_n$. This is because $[p_\lam]X_G=[\til{m}_\lam]X_{\ol{G}}=0$ whenever $\lam$ contains a part with value $n\in\mathbb N\setminus(C^*\sqcup C_1\sqcup C_2\sqcup\cdots)$, since $V^\lam(G)=\emptyset$ in such a case. \qed

\section{Examples of complements maps for specific graphs}\label{sec:examples}

\subsection{Existence for weighted triangle-containing non-clique families}
In contrast to the unweighted case as discussed in Proposition \ref{prop:c_equals_zero_or_half}, it is possible to define a Hopf algebra morphism $\phi:\Lam\rightarrow\til{\Lam}$ with the following properties: \begin{enumerate}[label={(\arabic*)}]
\item $\phi(X_G)=X_{\ol{G}}$ for all weighted graphs $G$ in a family $F$;
\item The family $F$ satisfies the conditions of Proposition \ref{prop:hopf_complement_maps}, i.e. $F=\{G_n\}_{n\in\mathbb N}$ where $G_n$ has weight $n$, each $G_n$ is connected, and all induced subgraphs of $G_n$ are isomorphic to a disjoint union of $G_i$'s from the same family;
\item $\phi(p_n)\neq0$ for some $n\in C$ (where the sets $V$, $E$, and $C$ are the same ones defined in Proposition \ref{prop:weighted_triangle_free_maps});
\item $F$ contains graphs that are not weighted cliques;
\item $F$ contains graphs with triangles;
\item In fact, we can make $F$ contain cliques of arbitrary size by choosing $B$ appropriately.
\end{enumerate} We describe a method to construct such a family below.

\begin{example}\label{example:binary_paths_cliques}
Write $\mathbb{N}_0 = \{0,1,2,\dots\}$ for the set of nonnegative integers. Fix a nonempty (possibly infinite) collection $B$ of disjoint finite subsets $B_k \se \{2^m\mid m\in\mathbb N_{0}\}=\{1,2,4,8,\dots\}$ such that each $B_k$ satisfies the following properties: \begin{itemize}
\item $|B_k|\geq3$;
\item $B_k$ consists of consecutive powers of 2.
\end{itemize}
For each $n\in\mathbb N$, let $\lam^{\tn{bin}(n)}$ denote the partition of $n$ whose parts $\lam^{\tn{bin}(n)}_i$ are the terms present in the binary expansion of $n$, arranged in descending order as usual (i.e. $\lam^{\tn{bin}(n)}_i>\lam^{\tn{bin}(n)}_j$ whenever $i<j$). For example, $\lam^{\tn{bin}(45)}=(32,8,4,1)$. Now let $G_n$ be the weight $n$ graph with $\ell(\lam^{\tn{bin}(n)})$ vertices whose vertex weights are given by the parts of $\lam^{\tn{bin}(n)}$. The vertices with weights $\lam^{\tn{bin}(n)}_i$ and $\lam^{\tn{bin}(n)}_j$ form an edge in $G_n$ if and only if one or both of the following holds: \begin{itemize}
\item $|i-j|=1$;
\item $\big\{\lam^{\tn{bin}(n)}_i,\lam^{\tn{bin}(n)}_j\big\}\subset B_k$ for some $B_k\in B$.
\end{itemize}
In other words, $G_n$ is obtained by first constructing the path graph connecting adjacent (when ordered by size) terms in the binary expansion of $n$, and then making consecutive terms into a clique if they come from the same $B_k$. For instance, if $B=\{\{1,2,4\},\{16,32,64,128,256\}\}$, then \vspace{-15pt}
$$G_1=\vcenter{\hbox{\begin{tikzpicture}
\graph[nodes={draw, circle, fill=black, inner sep=2pt}, empty nodes, no placement]{a[x=0,y=0,label=above:1];};
\end{tikzpicture}}},\hspace{10pt}
G_2=\vcenter{\hbox{\begin{tikzpicture}
\graph[nodes={draw, circle, fill=black, inner sep=2pt}, empty nodes, no placement]{a[x=0,y=0,label=above:2];};
\end{tikzpicture}}},\hspace{10pt}
G_3=\vcenter{\hbox{\begin{tikzpicture}
\graph[nodes={draw, circle, fill=black, inner sep=2pt}, empty nodes, no placement]{a[x=0,y=0,label=above:2]--b[x=1,y=0,label=above:1];};
\end{tikzpicture}}},\hspace{10pt}
G_4=\vcenter{\hbox{\begin{tikzpicture}
\graph[nodes={draw, circle, fill=black, inner sep=2pt}, empty nodes, no placement]{a[x=0,y=0,label=above:4];};
\end{tikzpicture}}},\hspace{10pt}
G_5=\vcenter{\hbox{\begin{tikzpicture}
\graph[nodes={draw, circle, fill=black, inner sep=2pt}, empty nodes, no placement]{a[x=0,y=0,label=above:4]--b[x=1,y=0,label=above:1];};
\end{tikzpicture}}},\hspace{10pt}
G_6=\vcenter{\hbox{\begin{tikzpicture}
\graph[nodes={draw, circle, fill=black, inner sep=2pt}, empty nodes, no placement]{a[x=0,y=0,label=above:4]--b[x=1,y=0,label=above:2];};
\end{tikzpicture}}},\hspace{10pt}
G_7=\vcenter{\hbox{\begin{tikzpicture}
\graph[nodes={draw, circle, fill=black, inner sep=2pt}, empty nodes, no placement]{a[x=0,y=0,label=below:4]--b[x=1,y=0,label=below:2]--c[x=0.5,y=0.866,label=above:1]--a;};
\end{tikzpicture}}},\vspace{-5pt}$$
$$G_8=\vcenter{\hbox{\begin{tikzpicture}
\graph[nodes={draw, circle, fill=black, inner sep=2pt}, empty nodes, no placement]{a[x=0,y=0,label=above:8];};
\end{tikzpicture}}},\hspace{10pt}
G_9=\vcenter{\hbox{\begin{tikzpicture}
\graph[nodes={draw, circle, fill=black, inner sep=2pt}, empty nodes, no placement]{a[x=0,y=0,label=above:8]--b[x=1,y=0,label=above:1];};
\end{tikzpicture}}},\hspace{10pt}
G_{10}=\vcenter{\hbox{\begin{tikzpicture}
\graph[nodes={draw, circle, fill=black, inner sep=2pt}, empty nodes, no placement]{a[x=0,y=0,label=above:8]--b[x=1,y=0,label=above:2];};
\end{tikzpicture}}},\hspace{10pt}
G_{11}=\vcenter{\hbox{\begin{tikzpicture}
\graph[nodes={draw, circle, fill=black, inner sep=2pt}, empty nodes, no placement]{a[x=0,y=0,label=above:8]--b[x=1,y=0,label=above:2]--c[x=2,y=0,label=above:1];};
\end{tikzpicture}}},\hspace{10pt}
G_{12}=\vcenter{\hbox{\begin{tikzpicture}
\graph[nodes={draw, circle, fill=black, inner sep=2pt}, empty nodes, no placement]{a[x=0,y=0,label=above:8]--b[x=1,y=0,label=above:4];};
\end{tikzpicture}}},$$
$$G_{13}=\vcenter{\hbox{\begin{tikzpicture}
\graph[nodes={draw, circle, fill=black, inner sep=2pt}, empty nodes, no placement]{a[x=0,y=0,label=above:8]--b[x=1,y=0,label=above:4]--c[x=2,y=0,label=above:1];};
\end{tikzpicture}}},\hspace{10pt}
G_{14}=\vcenter{\hbox{\begin{tikzpicture}
\graph[nodes={draw, circle, fill=black, inner sep=2pt}, empty nodes, no placement]{a[x=0,y=0,label=above:8]--b[x=1,y=0,label=above:4]--c[x=2,y=0,label=above:2];};
\end{tikzpicture}}},\hspace{10pt}
G_{15}=\vcenter{\hbox{\begin{tikzpicture}
\graph[nodes={draw, circle, fill=black, inner sep=2pt}, empty nodes, no placement]{a[x=0,y=0,label=above:8]--b[x=1,y=0,label=below:4]--c[x=2,y=0,label=below:2]--d[x=1.5,y=0.866,label=above:1]--b;};
\end{tikzpicture}}},\hspace{10pt}
G_{16}=\vcenter{\hbox{\begin{tikzpicture}
\graph[nodes={draw, circle, fill=black, inner sep=2pt}, empty nodes, no placement]{a[x=0,y=0,label=above:16];};
\end{tikzpicture}}},\vspace{-10pt}$$
$$G_{119}=\vcenter{\hbox{\begin{tikzpicture}
\graph[nodes={draw, circle, fill=black, inner sep=2pt}, empty nodes, no placement]{a[x=1,y=0,label=below:16]--b[x=0.5,y=.866,label=above:64]--c[x=0,y=0,label=below:32]--a--d[x=2,y=0,label=below:4]--e[x=3,y=0,label=below:2]--f[x=2.5,y=0.866,label=above:1]--d;};
\end{tikzpicture}}},\hspace{25pt}
G_{127}=\vcenter{\hbox{\begin{tikzpicture}
\graph[nodes={draw, circle, fill=black, inner sep=2pt}, empty nodes, no placement]{a[x=1,y=0,label=below:16]--b[x=0.5,y=.866,label=above:64]--c[x=0,y=0,label=below:32]--a--d[x=2,y=0,label=above:8]--e[x=3,y=0,label=below:4]--f[x=4,y=0,label=below:2]--g[x=3.5,y=0.866,label=above:1]--e;};
\end{tikzpicture}}},$$
$$G_{503}=\vcenter{\hbox{\begin{tikzpicture}
\graph[nodes={draw, circle, fill=black, inner sep=2pt}, empty nodes, no placement]{e[x=1,y=0,label=below:16]--a[x=0,y=0,label=below:256]--b[x=-0.309,y=0.951,label=left:128]--c[x=0.5,y=1.539,label=above:64]--d[x=1.309,y=0.951,label=right:32]--e--f[x=2,y=0,label=below:4]--g[x=3,y=0,label=below:2]--h[x=2.5,y=0.866,label=above:1]--f;a--c--e--b--d--a;};
\end{tikzpicture}}},\hspace{20pt}
G_{895}=\vcenter{\hbox{\begin{tikzpicture}
\graph[nodes={draw, circle, fill=black, inner sep=2pt}, empty nodes, no placement]{a[x=0,y=0,label=above:512]--b[x=1,y=0,label=below:256]--c[x=1,y=1,label=above:64]--d[x=2,y=1,label=above:32]--e[x=2,y=0,label=below:16]--f[x=3,y=0,label=above:8]--g[x=4,y=0,label=below:4]--h[x=5,y=0,label=below:2]--i[x=4.5,y=0.866,label=above:1]--g;b--d;c--e;b--e;};
\end{tikzpicture}}},\hspace{15pt}\tn{etc.}$$
\end{example}

\begin{prop} The family $F=\{G_n\}_{n\in\mathbb N}$ described in Example \ref{example:binary_paths_cliques} satisfies properties (1) to (5) described at the beginning of this section. \end{prop}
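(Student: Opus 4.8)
The plan is to first pin down the graph‑theoretic structure of the $G_n$'s, then verify the five properties in turn, reducing Property (1) to Proposition \ref{prop:the_culmination}.

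\textbf{Step 1 (structure of $G_n$).} Write the binary digits of $n$ as $w_1>w_2>\dots>w_\ell$ (the parts of $\lam^{\tn{bin}(n)}$). Because the blocks in the collection $B$ consist of consecutive powers of $2$ and are pairwise disjoint, I would first record that the list $w_1,\dots,w_\ell$ breaks uniquely into maximal runs of consecutive positions all of whose entries lie in one common block (a run may be a single vertex): within such a run every pair of entries lies in that block and hence forms an edge, so the run induces a clique, while between two consecutive runs the only edge of $G_n$ is the single path edge joining the last vertex of one run to the first vertex of the next --- any other cross‑run edge would be a block‑edge whose interval of powers of $2$ would force the two runs to merge. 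Thus each $G_n$ is a ``path of cliques''. From this I would extract the key fact that for any $i\in\mathbb{N}$ with binary digits $v_1>\dots>v_r$, the graph $G_i$ is a clique if and only if $r\le 2$ or all of $v_1,\dots,v_r$ lie in one common block: the ``if'' direction is immediate, and for ``only if'' note that if $r\ge 3$ and $G_i$ is a clique then $v_1\sim v_r$ must be a block‑edge, so $\{v_1,v_r\}$ lies in a common block, and since blocks are intervals of powers of $2$ this pulls in all the intermediate $v_j$ as well.

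\textbf{Step 2 (Property (2)).} Each $G_n$ plainly has weight $n$ and is connected (it contains the path $w_1-\cdots-w_\ell$), so the real content is that every induced subgraph of $G_n$ is a disjoint union of $G_i$'s; I expect this to be the main obstacle. I would prove that for any vertex set $S$ and any connected component $T$ of $G_n|_S$, with $i$ the total weight of $T$ (so the binary digits of $i$ are exactly the weights of the vertices of $T$), one has $G_n|_T\cong G_i$; then $G_n|_S$ is a disjoint union of such $G_i$'s. The crucial observation is that connectedness of $T$ forces consecutive vertices of $T$, in the weight ordering, to be genuinely adjacent in $G_n$: any edge of $G_n|_T$ ``jumping over'' a digit of $n$ not used by $T$ would have to be a block‑edge, and the interval of powers of $2$ carried by that block would then also connect the two vertices of $T$ flanking the skipped digit. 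Once that is in hand, comparing edge sets is routine --- consecutive vertices of $T$ are adjacent in both $G_n|_T$ and $G_i$, while for non‑consecutive vertices of $T$ adjacency in either graph is equivalent to the two weights lying in one common block --- so the weight‑preserving bijection $V(G_n|_T)\to V(G_i)$ is a graph isomorphism. This shows $F$ satisfies the hypotheses of Proposition \ref{prop:hopf_complement_maps}.

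\textbf{Step 3 (Property (1)).} I would set $C_1=\{2^m : m\ge 0\}$, $C_2=\{2^a+2^b : 0\le a<b\}$, $C_k=\{\sum_{v\in T}v : B_j\in B,\ T\se B_j,\ |T|=k\}$ for $k\ge 3$, and $C^*=\{\,i : i\text{ has at least }3\text{ binary digits, not all in one common block}\,\}$. By Step 1 these are exactly the weights occurring as induced $k$‑clique subgraphs, respectively as induced non‑clique connected subgraphs, of the graphs in $F$ --- and of the disjoint unions $G_\lam$, whose connected induced subgraphs each sit inside a single $G_{\lam_i}$. I would then check pairwise disjointness: elements of $C_k$ have exactly $k$ binary digits, so $C_k\cap C_{k'}=\emptyset$ for $k\ne k'$; elements of $C^*$ have at least $3$ digits, so $C^*$ misses $C_1$ and $C_2$; and for $k\ge 3$ every element of $C_k$ has all its digits inside one common block whereas no element of $C^*$ does. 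Proposition \ref{prop:the_culmination} then supplies a graded Hopf algebra morphism $\phi\colon\Lam\to\til\Lam$ with $\phi(p_n)=\tfrac{(-1)^{k-1}}{(k-1)!}\til{m}_n$ for $n\in C_k$ and $\phi(p_n)=0$ for $n\in C^*$ (and these sets exhaust $\mathbb{N}$, so $\phi$ is determined), which sends $X_H\mapsto X_{\ol H}$ for every weighted graph $H$ whose induced $k$‑clique weights lie in $C_k$ and whose induced non‑clique connected subgraph weights lie in $C^*$; in particular $\phi(X_{G_\lam})=X_{\ol{G_\lam}}$ for all $\lam$, which is Property (1).

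\textbf{Step 4 (Properties (3), (4), (5)).} Fix a block $B_j\in B$; since $|B_j|\ge 3$, letting $n_0$ be the sum of three of its powers of $2$, Step 1 gives $G_{n_0}=K_3$, a triangle in $F$ (Property (5)), and since $n_0\in C_3$, which is contained in the set $C$ of Proposition \ref{prop:weighted_triangle_free_maps} (weights of connected induced subgraphs on $\ge 3$ vertices), we get $\phi(p_{n_0})=\tfrac12\til{m}_{n_0}\ne 0$ (Property (3)). For Property (4), since the blocks are finite and pairwise disjoint I can choose three powers of $2$ not all lying in a single block --- for instance two from one block and one outside it --- and then their sum $i_0$ has at least $3$ binary digits not all in one common block, so by Step 1 the connected graph $G_{i_0}$ is not a clique, and $F$ contains a non‑clique. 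Hence $F$ satisfies all of (1)--(5), the only substantial work being the induced‑subgraph analysis in Step 2.
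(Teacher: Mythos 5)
Your proof is correct and follows essentially the same route as the paper: identify the sets $C_k$ and $C^*$ from the binary-expansion structure, characterize which $G_i$ are cliques via the fact that each block is an interval of consecutive powers of $2$, reduce Property (1) to Proposition \ref{prop:the_culmination} once Property (2) is known, and verify (3)--(5) with the same explicit choices of $n$. Your Step 2 phrases the induced-subgraph argument as an explicit edge-set comparison between a connected component $T$ and $G_{w(T)}$, whereas the paper argues via bridges and which vertices may be omitted from $S$, but these are the same underlying observation.
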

\begin{proof}
Using the definitions of the $C_m$'s from Proposition \ref{prop:the_culmination}, we see by the definition of $G_n$ that $$C_m=\begin{cases}\{2^k\mid k\in\mathbb N_0\}&\text{if }m=1,\\\{2^k+2^l\mid k,l\in\mathbb N_0,\;k\neq l\}&\text{if } m=2,\\\displaystyle\bigsqcup_{k=1}^{|B|}\bigsqcup_{\substack{S\se B_k\\|S|=m}}\left\{\sum_{x\in S}x\right\}&\text{if }m\geq3.\end{cases}$$ In other words, $V=C_1$ consists of all the powers of 2, $E=C_2$ consists of all the positive integers whose binary expansion contains exactly 2 terms, and for $m\geq3$, $C=\mathbb N\setminus(C_1\sqcup C_2)$ consists of all positive integers whose binary expansion contains at least 3 terms, with $C_m$ ($m\geq3$) being the subset of $C$ whose binary expansion contains exactly $m$ terms, all of which come from the same $B_k$. These $C_m$'s are disjoint due to the uniqueness of binary expansions.

If the binary expansion of $n$ contains at least 3 terms, and they are not all from the same $B_k$, then in particular, the largest and smallest terms (i.e. $\lam^{\tn{bin}(n)}_1$ and $\lam^{\tn{bin}(n)}_\ell$, where $\ell:=\ell(\lam^{\tn{bin}(n)})$ is the number of terms) cannot be from the same $B_k$, for if they were, all the terms in between would be from the same $B_k$ since each $B_k$ consists of consecutive powers of 2. Therefore, since $\ell\geq3$ implies $|\ell-1|>1$, the vertices with weights $\lam^{\tn{bin}(n)}_1$ and $\lam^{\tn{bin}(n)}_\ell$ do not form an edge. As such, $G_n$ cannot be a clique in such a case. This means that $C^*=\mathbb N\setminus\big(\bigsqcup_{m=1}^\infty C_m\big)$, and in particular, this is also disjoint from the $C_m$'s. As such, once we know property (2), we can apply Proposition \ref{prop:the_culmination} to immediately deduce property (1), where $\phi$ is the map defined in \S\ref{subsec:culmination_proof}. Property (3) then follows because $C = C^* \sqcup C_3\sqcup C_4\sqcup \dots$ contains elements from at least one $C_m$ with $m\ge 3$ (as $B$ is nonempty), and the map from the proof of Proposition \ref{prop:the_culmination} in \S\ref{subsec:culmination_proof} then sends the corresponding $p_n$ terms to nonzero multiples of their associated $\til{m}_n$'s. Property (4) also follows immediately, since for $n=b_1+b'_1+d$ where $b_1,b'_1\in B_1$ and $d\notin B_1$, $G_n$ is a 3-vertex path, which in particular is connected but not a clique. Similarly, for $n=b_1+b'_1+b''_1$ where $b_1,b'_1,b''_1\in B_1$, $G_n$ is a triangle, and so property (5) follows. We can make $F$ contain a clique of arbitrary size $m$ by requiring $B$ to contain an element $B_k$ of cardinality $m$, and hence property (6) follows. It therefore remains to prove property (2).

Consider an induced subgraph $G_n|_S$ of $G_n$. Suppose $S$ is chosen such that $G_n|_S$ is connected. Let $B_k^{(n)}:=B_k\cap\lam^{\tn{bin}(n)}$ and $B^{((n))}_k:=B^{(n)}_k\setminus\{\max B^{(n)}_k, \min B^{(n)}_k\}$. Notice that every edge whose endpoints are not both in the same $B_k$ is a bridge in $G_n$, i.e. an edge whose removal would disconnect $G_n$. All such edges connect the vertices with weights $\lam^{\tn{bin}(n)}_i$ and $\lam^{\tn{bin}(n)}_{i+1}$ for some $i$, and as such, if $S$ contains the vertices with weights $\lam^{\tn{bin}(n)}_i$ and $\lam^{\tn{bin}(n)}_j$, then for $G_n|_S$ to be connected, $S$ must also contain the vertices with weights $\lam^{\tn{bin}(n)}_h$ for every $h$ between $i$ and $j$, except possibly for those where $\lam^{\tn{bin}(n)}_h\in B^{((n))}_k$. Beyond that, any choice of $S\cap B^{((n))}_k$ is possible, and in such a case, the corresponding vertices also form a clique in $G_n|_S$. All other edges in $G_n|_S$ still connect adjacent vertices when arranged in order of weight, and hence $G_n|_S$ belongs to the family $F$, as required.
\end{proof}

\begin{remark}
We can generalize this example a bit further by dropping the requirement that the $B_k$'s be disjoint. The proof proceeds in the same fashion since induced subgraphs of a union (not necessarily disjoint) of cliques are still unions of cliques on the appropriate vertex subsets. This also allows us to redefine the $G_n$'s as unit interval graphs by requiring the intervals corresponding to the vertices in each $B_k$ to mutually overlap, and for adjacent terms in the binary expansion to overlap as well.
\end{remark}

\subsection{Maps (or lack thereof) for single graphs}
So far, we have explored the possible graded Hopf algebra maps that simultaneously send the CSFs of a whole family of graphs to those of their complements. But what if we only need to do so for a single graph rather than an entire family? As we will see, this offers some, but not too many, extra possibilities. In particular, it is still impossible to define such a map most of the time.
\begin{example} For the following graph $G$, we can find a graded Hopf algebra morphism $\phi:\Lam\rightarrow\til{\Lam}$ sending $X_G$ to $X_{\ol{G}}$ for which there exists a value of $n$  such that $a_n\notin\{0\}\cup\left\{\frac{(-1)^{k-1}}{(k-1)!}\right\}_{k\in\mathbb N}$ where $\phi(p_n)=a_n\til{m}_n$.
$$G=\vcenter{\hbox{\begin{tikzpicture}
\graph[nodes={draw, circle, fill=black, inner sep=2pt}, empty nodes, no placement]{a[x=0,y=0]--b[x=0.5,y=0.5]--c[x=0.5,y=-0.5]--a--d[x=1,y=0];};
\end{tikzpicture}}}
\hspace{20pt}\ol{G}=\vcenter{\hbox{\begin{tikzpicture}
\graph[nodes={draw, circle, fill=black, inner sep=2pt}, empty nodes, no placement]{a[x=0,y=0];b[x=0.5,y=0.5]--d[x=1,y=0]--c[x=0.5,y=-0.5];};
\end{tikzpicture}}}$$
\end{example} \begin{proof}
We compute: 
\begin{align*}
    X_G&=p_{1111}-4p_{211}+ p_{22}+4p_{31}-2p_4, \\
    X_{\ol{G}}&=\til{m}_{1111}+4\til{m}_{211}+ \til{m}_{22}+\til{m}_{31}.
\end{align*} 
It follows that $$\phi(X_G)=a_1^4\til{m}_{1111}-4a_1^2a_2\til{m}_{211}+a_2^2\til{m}_{22}+4a_1a_3\til{m}_{31}-2a_4\til{m}_4,$$ so equating coefficients with that of $X_{\ol{G}}$, we see that $(a_1^4,-4a_1^2a_2,a_2^2,4a_1a_3,a_4)=(1,4,1,1,0)$. This is satisfied (though not uniquely) by $(a_1,a_2,a_3,a_4)=(1,-1,\frac14,0)$. Note in particular that $a_3$'s denominator is not the factorial of a whole number. \
\end{proof}
Notice that to apply Proposition \ref{prop:clique_maps} or \ref{prop:unweighted_triangle_free_maps}, we needed each graph in the family to be a clique or a triangle-free graph, respectively, neither condition of which is satisfied by $G$ in the example above. This confirms our earlier assertion of the existence of extra possibilities when we restrict to single graphs. However, this is still a ``nice'' example in the sense that all the $a_n$'s are rational, something that need not always be the case as we will see in the more exotic example below:
\begin{example}
For the following weighted graph $G$, we can find a graded Hopf algebra morphism $\phi:\Lam\rightarrow\til{\Lam}$ sending $X_G$ to $X_{\ol{G}}$ and $p_n$ to $a_n\til{m}_n$, but only if we allow non-real coefficients $a_n$.
$$G=\vcenter{\hbox{\begin{tikzpicture}
\graph[nodes={draw, circle, fill=black, inner sep=2pt}, empty nodes, no placement]{a[x=-0.866,y=0,label=left:2]--b[x=0,y=0.5,label=right:1]--c[x=0,y=-0.5,label=right:1]--a};
\end{tikzpicture}}}
\hspace{20pt}\ol{G}=\vcenter{\hbox{\begin{tikzpicture}
\graph[nodes={draw, circle, fill=black, inner sep=2pt}, empty nodes, no placement]{a[x=-0.866,y=0,label=left:2];b[x=0,y=0.5,label=right:1];c[x=0,y=-0.5,label=right:1];};
\end{tikzpicture}}}$$
\end{example}
\begin{proof}
Here we have  
\begin{align*}
    X_G&=p_{211}-p_{22}-2p_{31}+2p_4, \\
    X_{\ol{G}}&=\til{m}_{211}+\til{m}_{22}+ 2\til{m}_{31}+\til{m}_{4}.
\end{align*}
This gives the equations $(a_1^2a_2,-a_2^2,-2a_3a_1,2a_4)=(1,1,2,1)$. Straightforward computation shows that the solutions are of the form $(a_1,a_2,a_3,a_4)=(\zeta_8,\zeta_8^6,\zeta_8^3,\frac12)$ where $\zeta_8$ is a primitive $8^\tn{th}$ root of unity. In particular, all solutions involve elements of $\mathbb C\setminus\mathbb R$.
\end{proof}
Finally, $\phi$ need not exist even for single graphs. This actually happens most of the time, especially for large graphs, since there may be up to $p(n)$ terms in the $p$-basis expansion of $X_G$ if $G$ has total weight $n$, where $p(n)$ is the number of partitions of $n$. Requiring each such $p_\lam$ to map to the appropriate multiple (i.e. the one that makes the coefficients of $\phi(X_G)$ and $X_{\ol{G}}$ equal) of $\til{m}_\lam$ would therefore induce a system of up to $p(n)$ equations in the $n$ unknowns $a_1,\dots,a_n$. Since $p(n)\gg n$ for large $n$, this is an overdetermined system. We illustrate an example of such a case below.
\begin{example}
For the following graph $G$, no graded Hopf algebra morphism $\phi:\Lam\rightarrow\til{\Lam}$ sends $X_G$ to $X_{\ol{G}}$.
$$G=\vcenter{\hbox{\begin{tikzpicture}
\graph[nodes={draw, circle, fill=black, inner sep=2pt}, empty nodes, no placement]{a[x=0,y=0]--b[x=0.5,y=0.5]--c[x=0.5,y=-0.5]--a;b--d[x=1.5,y=0.5];c--e[x=1.5,y=-0.5];};
\end{tikzpicture}}}
\hspace{20pt}\ol{G}=\vcenter{\hbox{\begin{tikzpicture}
\graph[nodes={draw, circle, fill=black, inner sep=2pt}, empty nodes, no placement]{a[x=0,y=0]--d[x=1.5,y=0.5]--e[x=1.5,y=-0.5]--a;b[x=0.5,y=0.5]--e;c[x=0.5,y=-0.5]--d;};
\end{tikzpicture}}}\cong G$$
\end{example}
\begin{proof}
Suppose for contradiction that such a $\phi$ exists, with $\phi(p_n)=a_n\til{m}_n$. Now observe:
\begin{alignat*}{10}X_G&\,=\,&p_{11111}\,&\,-\,&\,5p_{2111}\,&\,+\,&\,3p_{221}\,&\,+\,&\,6p_{311}&-2p_{32}-5p_{41}+2p_{5},\\X_{\ol{G}}&\,=\,&\til{m}_{11111}&\,+\,&5\til{m}_{2111}&\,+\,&3\til{m}_{221}&\,+\,&\til{m}_{311}&.\end{alignat*}
The first three coefficients imply that $|a_1|=|a_2|=1$, while the next one implies that $|a_3|=\frac16$. In particular, we must have $a_2\neq0$ and $a_3\neq0$. However, this would mean that $\phi(-2p_{32})=-2a_2a_3\til{m}_{32}\neq0$, which is a contradiction since $[\til{m}_{32}]X_{\ol{G}}=0$.
\end{proof}

\section{\texorpdfstring{$K$}{K}-analogues}\label{sec:K-analogues}
\counterwithin{theorem}{section}
\setcounter{theorem}{0}

In this section, we prove $K$-analogues of several of our results from earlier sections. Crew, Pechenik, and Spirkl \cite{crew2023kromatic} define a \emph{\tb{\tcb{proper set coloring}}} as an assignment of a nonempty set $\kappa(v)$ of positive integer valued colors to each vertex $v$ of $G$ such that adjacent vertices get nonoverlapping color sets, and they define the \emph{\tb{\tcb{Kromatic symmetric function (KSF)}}} as $$\ol{X}_G := \sum_\kappa \prod_{v\in V(G)}\prod_{i\in \kappa(v)} x_i^{w(v)},$$ where $\kappa$ ranges over all proper set colorings of $G$. Thus, each monomial corresponds to a proper set coloring, and within the monomial, the exponent on the variable $x_i$ is the weight of the set of vertices getting color $i$. Note that the lowest degree terms of $\ol{X}_G$ are exactly the terms of the CSF $X_G$. Since $\Lam$ only allows power series of bounded degree, $\ol{X}_G$ technically lives not in $\Lam$ but in its \emph{\tb{\tcb{completion}}} $\ol{\Lam}$, which allows symmetric power series of unbounded degree. 

The KSF was intended to be a $K$-analogue of the CSF in the sense of $K$-theory, although it is unknown whether it actually has a $K$-theoretic interpretation.  However, the KSF does have a Hopf algebra interpretation due to Marberg \cite{marberg2023kromatic}. He defines a modified Hopf algebra structure $\mWGraphs$ by first taking the completion of $\WGraphs$ (so allowing linear combinations of infinitely many different graphs) and then also changing the coproduct to be $$\blacktriangle G := \sum_{S\cup T = V(G)} G|_S\otimes G|_T,$$ the difference from the usual coproduct being that the vertex sets $S$ and $T$ are allowed to overlap.

For these Hopf algebras that allow infinite linear combinations of basis vectors, the usual characters do not make sense (sending all $p_\lam$'s to 1 or all edgeless graphs to 1) because they send elements that are infinite combinations of basis vectors to $\infty$. To fix this, Marberg instead defines modified characters that are maps to $\K[[t]]$ instead of to $\K$: $$\ol{\zeta}_{\Lam}(p_\lam) := t^{|\lam|},\hspace{1cm}\ol{\zeta}_{\WGraphs} := 0^{|E(G)|}\prod_{v\in V(G)}t^{w(v)}.$$ He refers to these modified Hopf algebra structures together their modified characters as \emph{\tb{\tcb{linearly compact (LC)-Hopf algebras}}} He then shows that there is a unique LC-Hopf algebra map $(\mWGraphs, \ol{\zeta}_{\WGraphs}) \to (\ol{\Lam},\ol{\zeta}_\Lam),$ and that it is given by $$G \mapsto \ol{X}_G.$$

$\ol{\Lam}$ has the same coproduct as $\Lam$, so we can similarly make the completion $\ol{\til{\Lam}}$ of Tusjie's $\til{\Lam}$ ring into a Hopf algebra by using the same coproduct. Now we can define \emph{\tb{\tcb{Kromatic pseudobases}}} of $\ol{\Lam}$ in the same manner as the chromatic bases:

\begin{prop}\label{prop:kromatic}
    If $G_1,G_2,\dots$ is a sequence of connected graphs such that $G_n$ has total weight $n$, then the set of KSFs $\ol{X}_{G_\lam}$ forms a multiplicative basis for $\Lam$ (which we will call a {\tb{\tcb{Kromatic pseudobasis}}}).
\end{prop}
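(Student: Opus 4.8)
The plan is to imitate the Cho--van Willigenburg ``leading term'' argument for chromatic bases, using the observation from the start of this section that the lowest-degree homogeneous component of $\ol{X}_G$ is exactly the CSF $X_G$. First I would record that $G\mapsto\ol{X}_G$ is an algebra morphism for the disjoint-union product: a proper set coloring of $G\sqcup H$ is just a pair of proper set colorings of $G$ and $H$, so $\ol{X}_{G\sqcup H}=\ol{X}_G\,\ol{X}_H$ (this is also immediate from Marberg's LC-Hopf algebra map $\mWGraphs\to\ol{\Lam}$, since $\mWGraphs$ retains the usual disjoint-union multiplication). Hence $\ol{X}_{G_\lam}=\ol{X}_{G_{\lam_1}}\cdots\ol{X}_{G_{\lam_\ell}}$, and it suffices to show that the elements $\ol{X}_{G_\lam}$, indexed by all partitions $\lam$, form a topological (``pseudo-'') basis of $\ol{\Lam}$, i.e. that every element of $\ol{\Lam}$ is a unique degreewise-finite $\K$-linear combination of them.

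The structural input is: since lowest-degree parts multiply to the lowest-degree part of a product (nonzero here because $X_{G_\lam}\neq0$), the lowest-degree homogeneous component of $\ol{X}_{G_\lam}$ is $X_{G_\lam}$, which is homogeneous of degree $|\lam|$; and since the CSFs $X_{G_\lam}$ form a multiplicative basis of $\Lam$ (Cho--van Willigenburg, Alinaeifard--Wang--van Willigenburg), for each $n$ the set $\{X_{G_\lam}:|\lam|=n\}$ is a basis of $\Lam_n$. For uniqueness I would argue: if $\sum_\lam c_\lam\ol{X}_{G_\lam}=0$ with some $c_\lam\neq0$, let $n$ be minimal with $c_\lam\neq0$ for some $|\lam|=n$; the degree-$n$ component of the sum is $\sum_{|\lam|=n}c_\lam X_{G_\lam}$ (terms with $|\lam|<n$ have $c_\lam=0$ by minimality, and terms with $|\lam|>n$ contribute nothing in degree $n$), which must vanish, forcing all those $c_\lam=0$ --- a contradiction. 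For existence, given $f=\sum_k f_k\in\ol{\Lam}$ with $f_k\in\Lam_k$, I would build the $c_\lam$ by induction on degree: having chosen $c_\lam$ for $|\lam|<n$ so that $f-\sum_{|\lam|<n}c_\lam\ol{X}_{G_\lam}$ has vanishing components in all degrees $<n$, expand its degree-$n$ component in the basis $\{X_{G_\lam}:|\lam|=n\}$ to define $c_\lam$ for $|\lam|=n$; subtracting $\sum_{|\lam|=n}c_\lam\ol{X}_{G_\lam}$ then kills all degrees $\le n$ without disturbing the already-zero lower degrees, since each such $\ol{X}_{G_\lam}$ has no component below degree $n$. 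The resulting sum $\sum_\lam c_\lam\ol{X}_{G_\lam}$ is degreewise finite, hence a well-defined element of $\ol{\Lam}$ equal to $f$. Together with $\ol{X}_{G_\lam}=\prod_i\ol{X}_{G_{\lam_i}}$, this shows the $\ol{X}_{G_\lam}$ form a multiplicative pseudobasis.

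The main obstacle is not conceptual but bookkeeping-theoretic: one must be careful to work in the completion $\ol{\Lam}$ rather than in $\Lam$, checking that the expansions produced above are degreewise finite and therefore summable in the linearly compact topology, so that they genuinely define elements of $\ol{\Lam}$. Beyond that, the argument is the verbatim KSF analogue of the leading-term proof underlying the original chromatic-basis result, with the role of ``degree zero'' leading term played here by the CSF sitting in lowest degree.
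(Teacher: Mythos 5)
Your argument is correct and follows essentially the same route as the paper's: multiplicativity over disjoint unions is immediate, and linear independence comes from the observation that the lowest-degree homogeneous component of $\ol{X}_{G_\lam}$ is $X_{G_\lam}$, so a minimal-degree nontrivial relation would restrict to a finite relation among the chromatic basis elements $X_{G_\lam}$. You go slightly further than the paper by also writing out the degree-by-degree existence (spanning) argument and the degreewise-finiteness check, which the paper leaves implicit; this is a worthwhile completion but not a different method.
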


Following Marberg \cite{marberg2023kromatic}, we use the word \emph{\tb{\tcb{pseudobasis}}} instead of basis to indicate that each element of $\ol{\Lam}$ can be written as a unique but possibly \emph{infinite} linear combination of the basis vectors, rather than necessarily as a finite linear combination of them.

\begin{proof}
    The multiplicative part is automatic, since the KSF multiplies over disjoint unions just like the CSF, i.e. $\ol{X}_{G\sqcup H} = \ol{X}_G \ol{X}_H.$ To show that it is actually a pseudobasis, assume for contradiction we had some (possibly infinite) nontrivial relation $a_1 \ol{X}_{G_{\lam^1}} + a_2 \ol{X}_{G_{\lam^2}}+\dots$ for some partitions $\lam^i$. Now note that the lowest degree terms in $\ol{X}_{G_{\lam^i}}$ have degree $|\lam^i|$ and form the CSF $X_{G_{\lam^i}}.$ Thus, if we let $d$ be the lowest degree among all terms showing up among any of the $\ol{X}_{G_{\lam^i}}$'s, the degree $d$ terms are precisely the CSFs $X_{G_{\lam^i}}$ for the partitions $\lam^i$ in our set that satisfy $|\lam^i|=d$, and there can only be finitely many such CSFs because there are only finitely many partitions $\lam$ with $|\lam|=d$. But then we would get a nontrivial linear relation among finitely many of the CSFs $X_{G_\lam}$, which cannot happen because the CSFs $X_{G_\lam}$ are known from \cite{aliniaeifard2021extended} to form a chromatic basis. Thus, the $\ol{X}_{G_\lam}$'s form a pseudobasis for $\ol{\Lam}$.
\end{proof}

We know from \cite{pierson2025graphs} that in $\ol{\til{\Lam}}$, the KSF multiplies over joins, just like the CSF, i.e. $\ol{X}_{G\odot H} = \ol{X}_G \odot \ol{X}_H.$ Thus, for a sequence of graphs $G_1,G_2,\dots$, the set of KSFs $\ol{X}_{\ol{G_\lam}}=\ol{X}_{\ol{G_{\lam_1}}\odot \dots \odot \ol{G_{\lam_\ell}}}$ will form a multiplicative \emph{\tb{\tcb{coKromatic pseudobasis}}} for $\ol{\til{\Lam}}$ as long as there are no linear relations among them. The possible coKromatic pseudobases look essentially the same as our possible cochromatic bases from Proposition \ref{prop:cochromatic}:

\begin{prop}\label{prop:cokromatic}
    For a sequence of graphs $G_1,G_2,\dots,$ the KSFs $\ol{X}_{\ol{G_\lam}}=\ol{X}_{\ol{G_{\lam_1}}\odot \dots \odot \ol{G_{\lam_\ell}}}$ form a pseudobasis for $\ol{\til{\Lam}}$ if and only if $G_n$ is a clique of total weight $n$ for every $n$.
\end{prop}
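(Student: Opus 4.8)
The strategy is to mimic the argument for Proposition \ref{prop:cochromatic} (cochromatic bases), substituting KSFs for CSFs and pseudobases for bases throughout, and using the known facts that $\ol{X}_{G\odot H} = \ol{X}_G\odot\ol{X}_H$ in $\ol{\til{\Lam}}$ and that the lowest-degree terms of $\ol{X}_G$ form $X_G$. First I would observe that the multiplicative part is automatic from the join multiplicativity, so the content is showing the pseudobasis property is equivalent to each $G_n$ being a weighted clique. For the ``only if'' direction, suppose the $\ol{X}_{\ol{G_\lam}}$'s form a pseudobasis. Then in particular the $\ol{X}_{\ol{G_n}}$'s must be algebraically independent generators of $\ol{\til{\Lam}}$ (in the linearly compact sense). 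I would extract the lowest-degree part: since the lowest-degree terms of $\ol{X}_{\ol{G_n}}$ are exactly $X_{\ol{G_n}}$, and passing to lowest-degree terms is compatible with products, an algebraic generating/independence statement for the $\ol{X}_{\ol{G_n}}$'s forces the analogous statement for the $X_{\ol{G_n}}$'s in $\til{\Lam}$. By Proposition \ref{prop:cochromatic}, this already forces each $G_n$ to be a weighted clique of total weight $n$.

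For the ``if'' direction, assume each $G_n$ is a weighted clique of total weight $n$; I must show the $\ol{X}_{\ol{G_\lam}}$'s form a pseudobasis, i.e. every element of $\ol{\til{\Lam}}$ is a unique (possibly infinite) linear combination of them. Here I would argue exactly as in the proof of Proposition \ref{prop:kromatic}: suppose there were a nontrivial (possibly infinite) linear relation $\sum_i a_i \ol{X}_{\ol{G_{\lam^i}}} = 0$. Let $d$ be the minimal degree appearing among all the terms of all the $\ol{X}_{\ol{G_{\lam^i}}}$'s with $a_i\neq 0$; since $\ol{X}_{\ol{G_{\lam^i}}}$ has lowest-degree component $X_{\ol{G_{\lam^i}}}$ of degree $|\lam^i|$, the degree-$d$ part of the relation is a nontrivial linear combination of the finitely many $X_{\ol{G_{\lam^i}}}$ with $|\lam^i| = d$, contradicting the fact (from Proposition \ref{prop:cochromatic}) that the $X_{\ol{G_\lam}}$'s form a genuine basis of $\til{\Lam}$ when the $G_n$'s are cliques. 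This gives linear independence in the pseudobasis sense; spanning follows because given any target element, one can solve degree by degree — at each degree the finitely many relevant $X_{\ol{G_\lam}}$'s span the corresponding graded piece of $\til{\Lam}$, so one peels off a finite linear combination, subtracts, and the remainder has strictly higher minimal degree, so the process produces a well-defined (possibly infinite) expansion.

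I expect the main obstacle to be the ``only if'' direction, specifically making precise that the lowest-degree-component functor interacts correctly with the notion of algebraic independence and generation in the linearly compact setting: one needs that if the $\ol{X}_{\ol{G_n}}$'s generate $\ol{\til{\Lam}}$ topologically and have no relations, then in particular their lowest-degree parts $X_{\ol{G_n}}$ generate $\til{\Lam}$ as an ordinary algebra with no relations, so that Proposition \ref{prop:cochromatic} applies. The cleanest way to handle this is probably to run the argument from the proof of Proposition \ref{prop:cochromatic} directly in terms of $\ol{\til{m}}_\lam$-coefficients: use that $[\ol{\til{m}}_\lam]\ol{X}_{\ol{G}}$ records certain set-partition data of $V(G)$, that $\ol{\til{m}}_n$ cannot be written in terms of smaller $\ol{\til{m}}_i$'s, and that in order for some $\ol{X}_{\ol{G_i}}$ to have an $\ol{\til{m}}_n$ term the relevant vertex set must be stable in $\ol{G_i}$, i.e. $G_i$ must be a clique — this is a direct transcription of the CSF argument and sidesteps any delicate functoriality. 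The remaining bookkeeping (matching up induction on $n$, ruling out superfluous graphs, and the minimal-degree relation argument for independence) is routine and parallels the proofs already given. \qed
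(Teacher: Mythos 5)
Your proposal is correct and, in its final form, is essentially the paper's proof: the paper argues exactly via the SSC expansion $\ol{X}_{\ol{G}}=\sum_{C\in\textsf{SSC}(\ol{G})}\ol{\til{m}}_{\lam(C)}$, noting that a standalone $\ol{\til{m}}_n$ term can only arise when the entire vertex set of some $\ol{G_{i_n}}$ is a single stable set of weight $n$ (so $G_{i_n}$ is a clique), and then transcribes the cochromatic-basis argument, with the pseudobasis/linear-independence bookkeeping handled by the minimal-degree argument you describe. Your first-pass idea of extracting lowest-degree components is the one step that does not quite work for the ``only if'' direction (in a possibly infinite expansion of $\ol{\til{m}}_n$, the degree-$n$ component also receives contributions from the higher-degree terms of the $\ol{X}_{\ol{G_\lam}}$ with $|\lam|<n$, so it does not reduce cleanly to a statement about the CSFs $X_{\ol{G_\lam}}$ alone), but you correctly flag this and your fallback via $\ol{\til{m}}_\lam$-coefficients is precisely the argument the paper gives.
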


\begin{proof}
    To prove this, we will introduce some more of the terminology from \cite{crew2023kromatic}. The \emph{\tb{\tcb{$K$-theoretic augmented monomial symmetric functions}}} are $$\ol{\til{m}}_\lam:= \ol{X}_{K^\lam},$$ where $K^\lam$ is the weighted clique whose vertex weights are the parts $\lam_1,\dots,\lam_\ell$ of $\lam$. The authors of \cite{crew2023kromatic} show that the $\ol{\til{m}}_\lam$'s form a pseudobasis for $\ol{\Lam}$. It follows that they form a \emph{multiplicative} pseudobasis for $\ol{\til{\Lam}}$, since they satisfy $$\ol{\til{m}}_{\lam \sqcup \mu} = \ol{X}_{K^\lam \odot K^\mu} = \ol{X}_{K^\lam}\odot \ol{X}_{K^\mu} = \ol{\til{m}}_\lam \odot \ol{\til{m}}_\mu,$$ as KSFs multiply over joins in $\ol{\til{\Lam}}.$ The authors of \cite{crew2023kromatic} show that KSFs expand in the $\ol{\til{m}}_\lam$ pseudobasis as $$\ol{X}_G = \sum_{C\in{\textsf{SSC}}(G)} \ol{\til{m}}_{\lam(C)},$$ where ${\textsf{SSC}}(G)$ is the set of \emph{\tb{\tcb{stable set covers (SSCs)}}} of $G$, i.e. collections $C$ of stable sets in $G$ such that each vertex is in at least one of the stable sets, and $\lam(C)$ is the partition whose parts are the weights of the stable sets in $C$.

    Now for the $X_{\ol{G_\lam}}$'s to form a pseudobasis, we must be able to get all the $\ol{\til{m}}_n$'s as (possibly infinite) linear combinations of them. But we can only get an $\ol{\til{m}}_n$ by itself in the SSC expansion if some SSC consists of a single stable set of weight $n$. Thus, one of the graphs $\ol{G_{i_n}}$ needs to be edgeless with total weight $n$. Then by the same argument as in Proposition \ref{prop:cochromatic}, every $\ol{\til{m}}_n$, and hence every $\ol{\til{m}}_\lam,$ can be expressed as a polynomial in the $\ol{G_{i_n}}$'s, so we cannot add any additional graphs $\ol{G_n}$ with $n\ne i_j$ for some $j$, or the resulting set of KSFs would not be algebraically independent. Thus, the only possibility is that $\ol{G_n}$ is an edgeless graph of weight $n$ for each $n$, so $G_n$ is a clique of weight $n.$
\end{proof}

The $K$-analogue of Proposition \ref{prop:WGraphs_to_Lam_til} also holds by the same reasoning as for Proposition \ref{prop:WGraphs_to_Lam_til}:

\begin{prop}\label{prop:K_graph_maps}
    The map $G\mapsto \ol{X}_{\ol{G}}$ is a morphism of Hopf algebras from $\mWGraphs$ to $\ol{\til{\Lam}}$.
\end{prop}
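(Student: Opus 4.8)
The plan is to mimic the proof of Proposition~\ref{prop:WGraphs_to_Lam_til} exactly, replacing the usual coproduct $\Delta$ by Marberg's coproduct $\blacktriangle$ and the CSF by the KSF throughout. Since both $\mWGraphs$ and $\ol{\til{\Lam}}$ are already known to be (LC-)Hopf algebras, it suffices to check that $G\mapsto \ol{X}_{\ol G}$ preserves the product and the coproduct (the unit and counit are immediate, as the empty graph has complement the empty graph, and the antipode then comes along for free). For the product, I would use that the complement of a disjoint union is the join of the complements, together with the fact cited from \cite{pierson2025graphs} that the KSF multiplies over joins in $\ol{\til{\Lam}}$, giving $\ol{X}_{\ol{G\sqcup H}} = \ol{X}_{\ol G\odot\ol H} = \ol{X}_{\ol G}\odot\ol{X}_{\ol H}$.

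For the coproduct, I would start from the fact that $G\mapsto \ol X_G$ is an LC-Hopf algebra map from $\mWGraphs$ to $\ol{\Lam}$, so that in $\ol{\Lam}$ (and hence in $\ol{\til\Lam}$, which carries the same coproduct) we have $\blacktriangle \ol X_{\ol G} = \sum_{S\cup T = V(\ol G)} \ol X_{\ol G|_S}\otimes \ol X_{\ol G|_T}$, where now the subsets $S,T$ are allowed to overlap. The key observation, exactly as in Proposition~\ref{prop:WGraphs_to_Lam_til}, is that taking induced subgraphs commutes with taking complements: $\ol G|_S = \ol{G|_S}$ for every $S\subseteq V(G)$. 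Hence $\blacktriangle \ol X_{\ol G} = \sum_{S\cup T = V(G)} \ol X_{\ol{G|_S}}\otimes \ol X_{\ol{G|_T}}$, and these terms are precisely the images under $G\mapsto\ol X_{\ol G}$ of the terms of $\blacktriangle G = \sum_{S\cup T = V(G)} G|_S\otimes G|_T$. So the coproduct is preserved, and the map is a morphism of (LC-)Hopf algebras.

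One point that deserves a sentence of care, and which I expect to be the only genuine subtlety, is that $\mWGraphs$ allows infinite linear combinations, so I should note that the map $G\mapsto\ol X_{\ol G}$ is well-defined and continuous on these linearly compact completions — but this is inherited from the analogous property of $G\mapsto\ol X_G$ together with the fact that $G\mapsto\ol G$ is a bijection on graphs of each fixed weight (so it induces a continuous linear isomorphism of the completions). I would also remark, as the excerpt does after Proposition~\ref{prop:WGraphs_to_Lam_til}, on the kernel: plugging $\ol{G\setminus e}$ into the $K$-theoretic contraction-deletion relation for the KSF should give that elements of the form $G - G\setminus e - (\text{the appropriate contraction/expansion terms})$ lie in the kernel, though since the statement of Proposition~\ref{prop:K_graph_maps} as given only asserts that the map is a Hopf algebra morphism, a full kernel description is optional. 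The main obstacle, such as it is, is purely bookkeeping: making sure the overlapping-subsets coproduct $\blacktriangle$ interacts with complementation in the same clean way as $\Delta$ does, which it does because $\ol{G|_S}=\ol G|_S$ holds regardless of whether the subsets in the coproduct sum overlap.
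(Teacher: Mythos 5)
Your argument is correct and matches the paper's proof essentially verbatim: the product is handled via $\ol{X}_{\ol{G\sqcup H}}=\ol{X}_{\ol G\odot\ol H}=\ol{X}_{\ol G}\odot\ol{X}_{\ol H}$, and the coproduct via the identity $\ol{G}|_S=\ol{G|_S}$ applied to the overlapping-subsets expansion of $\Delta\ol{X}_G$ coming from the fact that $G\mapsto\ol{X}_G$ is an LC-Hopf algebra map. The only nitpick is notational: the coproduct on $\ol{\Lam}$ and $\ol{\til{\Lam}}$ should be written $\Delta$ rather than $\blacktriangle$ (the latter is reserved for $\mWGraphs$), but the content is unaffected.
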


\begin{proof}
    The product is preserved because $\ol{X}_{\ol{G\sqcup H}}=\ol{X}_{\ol{G}\odot\ol{H}} = \ol{X}_{\ol{G}}\odot \ol{X}_{\ol{H}}.$ For the coproduct, the fact that $G \mapsto \ol{X}_G$ is a Hopf algebra map implies that $$\Delta \ol{X}_G = \sum_{S\cup T = V(G)} \ol{X}_{G|_S}\otimes \ol{X}_{G|_T},$$ where $\Delta$ denotes the coproduct on $\ol{\Lam}$ and hence also using the coproduct on $\ol{\til{\Lam}}$, since we defined the coproducts to be the same. Because of the fact that $\ol{G}|_S = \ol{G|_S}$, we also have $$\Delta \ol{X}_{\ol{G}} = \sum_{S\cup T = V(G)} \ol{X}_{\ol{G}|_S}\otimes \ol{X}_{\ol{G}|_T} = \sum_{S\cup T = V(G)} \ol{X}_{\ol{G|_S}}\otimes \ol{X}_{\ol{G|_T}}.$$ These terms are precisely the images under our $G\mapsto \ol{X}_{\ol{G}}$ map of the corresponding terms in the coproduct $$\blacktriangle G := \sum_{S\cup T = V(G)} G|_S\otimes G|_T,$$ so the coproduct is preserved.
\end{proof}

We can also prove the $K$-analogue of Proposition \ref{prop:hopf_complement_maps} using the same reasoning as for Proposition \ref{prop:hopf_complement_maps}:

\begin{prop}\label{prop:K_complement_maps}
    If $G_1,G_2,\dots$ are connected graphs such that $G_n$ has weight $n$ and all its induced subgraphs are disjoint unions of $G_i$'s, then $\ol{X}_{G_\lam} \mapsto \ol{X}_{\ol{G_\lam}}$ is a Hopf algebra map from $\ol{\Lam}$ to $\ol{\til{\Lam}}.$
\end{prop}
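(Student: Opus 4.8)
The plan is to transcribe the proof of Proposition~\ref{prop:hopf_complement_maps} almost verbatim, replacing the CSF by the KSF, $\Lam$ by $\ol{\Lam}$, $\til{\Lam}$ by $\ol{\til{\Lam}}$, $\WGraphs$ by $\mWGraphs$, and the ordinary coproduct formula by the overlapping-union formula $\Delta\ol{X}_G=\sum_{S\cup T=V(G)}\ol{X}_{G|_S}\otimes\ol{X}_{G|_T}$ that governs KSFs (which holds because $G\mapsto\ol{X}_G$ is an LC-Hopf algebra morphism $\mWGraphs\to\ol{\Lam}$, exactly as used in the proof of Proposition~\ref{prop:K_graph_maps}). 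First I would check that the assignment $\ol{X}_{G_\lam}\mapsto\ol{X}_{\ol{G_\lam}}$ is a well-defined continuous linear map $\ol{\Lam}\to\ol{\til{\Lam}}$: by Proposition~\ref{prop:kromatic} the $\ol{X}_{G_\lam}$ form a pseudobasis of $\ol{\Lam}$, so the map is determined on a pseudobasis; and since the lowest-degree part of $\ol{X}_{G_\lam}$ (and likewise of $\ol{X}_{\ol{G_\lam}}$) is the CSF $X_{G_\lam}$, which is homogeneous of degree $|\lam|$, the map respects the filtration by lowest degree and so extends to infinite linear combinations, just as in the proof of Proposition~\ref{prop:kromatic}. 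There is no claim of surjectivity (indeed by Proposition~\ref{prop:cokromatic} the images form a pseudobasis only when every $G_n$ is a clique).

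Next I would verify the product is preserved. Since the KSF multiplies over disjoint unions in $\ol{\Lam}$ and over joins in $\ol{\til{\Lam}}$, and the complement of a disjoint union is the join of the complements,
$$\ol{X}_{G_\lam}\,\ol{X}_{G_\mu}=\ol{X}_{G_\lam\sqcup G_\mu}=\ol{X}_{G_{\lam\sqcup\mu}}\longmapsto\ol{X}_{\ol{G_{\lam\sqcup\mu}}}=\ol{X}_{\ol{G_\lam}\odot\ol{G_\mu}}=\ol{X}_{\ol{G_\lam}}\odot\ol{X}_{\ol{G_\mu}},$$
and the unit and counit are preserved since $G_\lam$ is empty exactly when $\lam$ is empty, so the map is a morphism of algebras.

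The coproduct step is where the induced-subgraph hypothesis enters. Because $G\mapsto\ol{X}_G$ is an LC-Hopf algebra morphism $\mWGraphs\to\ol{\Lam}$, we have $\Delta\ol{X}_{G_\lam}=\sum_{S\cup T=V(G_\lam)}\ol{X}_{(G_\lam)|_S}\otimes\ol{X}_{(G_\lam)|_T}$, a finite sum since $G_\lam$ has finitely many vertices. Any induced subgraph of $G_\lam=G_{\lam_1}\sqcup\cdots\sqcup G_{\lam_\ell}$ is a disjoint union of induced subgraphs of the $G_{\lam_j}$'s, hence by hypothesis a disjoint union of $G_i$'s, i.e.\ of the form $G_\nu$; so I can rewrite $\Delta\ol{X}_{G_\lam}=\sum_{\mu,\nu}\ol{a}^\lam_{\mu\nu}(\ol{X}_{G_\mu}\otimes\ol{X}_{G_\nu})$, where $\ol{a}^\lam_{\mu\nu}$ counts the pairs $(S,T)$ with $S\cup T=V(G_\lam)$, $(G_\lam)|_S=G_\mu$, and $(G_\lam)|_T=G_\nu$. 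Using $\ol{G_\lam}|_S=\ol{(G_\lam)|_S}$ and applying the same coproduct formula to $\ol{G_\lam}$, the very same pairs $(S,T)$ yield $\Delta\ol{X}_{\ol{G_\lam}}=\sum_{\mu,\nu}\ol{a}^\lam_{\mu\nu}(\ol{X}_{\ol{G_\mu}}\otimes\ol{X}_{\ol{G_\nu}})$ with identical structure constants. Hence the coproduct agrees on pseudobasis elements with the images of their pseudobasis elements, and then on all of $\ol{\Lam}$ by linearity and continuity, so the map is a morphism of (LC-)Hopf algebras.

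I expect the only genuine obstacle to be the completion bookkeeping: confirming that the map extends continuously to infinite linear combinations, and that the coproduct identity, established a priori only on pseudobasis elements, propagates to the whole completed space. Both are handled by the lowest-degree filtration argument above, exactly as in the proofs of Propositions~\ref{prop:kromatic} and~\ref{prop:K_graph_maps}; everything else is a direct transcription of the proof of Proposition~\ref{prop:hopf_complement_maps}.
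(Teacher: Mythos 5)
Your proposal is correct and follows essentially the same route as the paper: product preservation via $\ol{X}_{\ol{G_\lam\sqcup G_\mu}}=\ol{X}_{\ol{G_\lam}}\odot\ol{X}_{\ol{G_\mu}}$, and coproduct preservation by matching the structure constants $a^\lam_{\mu\nu}$ counting pairs $S\cup T=V(G_\lam)$ with prescribed induced subgraphs, using $\ol{G_\lam}|_S=\ol{(G_\lam)|_S}$. Your additional care about well-definedness and continuity on the completed space is a reasonable (and slightly more thorough) supplement that the paper leaves implicit.
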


\begin{proof}
    The product is preserved because $$\ol{X}_{G_\lam}\ol{X}_{G_\mu} = \ol{X}_{G_\lam \sqcup G_\mu} \mapsto \ol{X}_{\ol{G_\lam \sqcup G_\mu}} = \ol{X}_{\ol{G_\lam}\odot \ol{G_\mu}} = \ol{X}_{\ol{G_\lam}}\odot \ol{X}_{\ol{G_\mu}}.$$ The coproduct is also preserved because we can write $$\Delta \ol{X}_{G_\lam} = \sum_{\mu,\nu} a_{\mu\nu}^\lam(\ol{X}_{G_\mu}\otimes \ol{X}_{G_\nu}),$$ where $a_{\mu\nu}^\lam$ counts the number of pairs $S,T \se V(G)$ with $S\cup T = V(G)$ such that $(G_\lam)|_S = G_\mu$ and $(G_\lam)|_T = G_\nu.$ The pair $S,T$ then also satisfies $(\ol{G_\lam})|_S = \ol{G_\mu}$ and $(\ol{G_\lam})|_T = \ol{G_\nu}.$ Thus, we can also write $$\Delta \ol{X}_{\ol{G_\lam}} = \sum_{\mu,\nu} a_{\mu\nu}^\lam(\ol{X}_{\ol{G_\mu}}\otimes \ol{X}_{\ol{G_\nu}})$$ with the same structure constants $a_{\mu\nu}^\lam$, so the coproduct is preserved.
\end{proof}

Finally, we prove the $K$-analogue of \ref{prop:unweighted_triangle_free_maps}:

\begin{prop}\label{prop:K_triangle_free_map}
    There is a single Hopf algebra morphism $\ol{\Lam}\to\ol{\til{\Lam}}$ taking the KSFs of all unweighted triangle-free graphs to the KSFs of their complements.
\end{prop}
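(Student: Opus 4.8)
The plan is to follow the template of the proof of Proposition~\ref{prop:unweighted_triangle_free_maps}, with two adjustments forced by the $K$-setting: the morphism will no longer be graded, and the role of Lemma~\ref{lem:source_components} will be played by a power-sum-type expansion for the KSF. First I would produce the morphism essentially for free. Paths $P_1,P_2,\dots$ are connected, triangle-free, and every induced subgraph of a path is a disjoint union of paths, so by Proposition~\ref{prop:kromatic} the KSFs $\ol{X}_{P_\lam}$ form a Kromatic pseudobasis of $\ol{\Lam}$, and by Proposition~\ref{prop:K_complement_maps} the assignment $\psi\colon\ol{X}_{P_\lam}\mapsto\ol{X}_{\ol{P_\lam}}$ extends to a well-defined Hopf algebra morphism $\ol{\Lam}\to\ol{\til{\Lam}}$. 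So the actual content of the proposition is that this single $\psi$ also sends $\ol{X}_G$ to $\ol{X}_{\ol G}$ for \emph{every} triangle-free $G$, not merely for disjoint unions of paths.

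To verify that, I would compute both sides. On the target side I would use the stable-set-cover expansion $\ol{X}_{\ol G}=\sum_{C\in\textsf{SSC}(\ol G)}\ol{\til{m}}_{\lam(C)}$; since the stable sets of $\ol G$ are exactly the cliques of $G$ and $G$ is triangle-free, every $\lam(C)$ has all parts equal to $1$ or $2$, so only the $\ol{\til{m}}_\lam$ with such $\lam$ appear. On the source side I would expand $\ol{X}_G$ before applying $\psi$, either (a) in power sums via a $K$-analogue of Lemma~\ref{lem:source_components} (obtainable by the same heaps-of-pieces computation, with ``proper colorings'' replaced by ``proper set colorings'' and an inclusion–exclusion over which vertices are actually covered), or (b) through an identity expressing the KSF of any graph $H$ as a weighted sum $\ol{X}_H=\sum_{\mathbf s}\frac{1}{\prod_v s_v!}X_{H^{\langle\mathbf s\rangle}}$ of CSFs of clique blow-ups $H^{\langle\mathbf s\rangle}$ (replace each vertex by a clique of $s_v$ copies, joining copies of adjacent vertices), to which Lemma~\ref{lem:source_components} then applies directly. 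Either way the combinatorial heart is the $K$-analogue of the partial-matching bijection from the proof of Proposition~\ref{prop:unweighted_triangle_free_maps}: for triangle-free $G$ one must match, with the correct signs and multiplicities, the acyclic-orientation / source-component data governing the relevant power-sum coefficients against the stable-set-cover data governing $[\ol{\til{m}}_\mu]\ol{X}_{\ol G}$, the new feature being that one is now tracking color multiplicities, so partial matchings get replaced by matchings with repeats.

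The step I expect to be hardest is controlling the higher-degree parts. Unlike in Proposition~\ref{prop:unweighted_triangle_free_maps}, neither $\ol{X}_G$ nor $\ol{X}_{\ol G}$ is homogeneous, and one can check on a small example --- already $G=K_2$, where $\psi(\ol{X}_{K_2})=\ol{X}_{\overline{K_2}}$ forces $\psi$ to send $\prod_i(1+2x_i)$ to $\bigl(\sum_k e_k\bigr)^2$ --- that $\psi$ is genuinely non-graded: it does not send $p_n$ to a scalar multiple of $\til{m}_n$ but to a non-graded primitive of $\ol{\til{\Lam}}$. So one cannot reduce to matching finitely many coefficients in a fixed degree; before doing any matching one needs a usable closed description of $\psi$ --- for instance its values on the group-likes $\prod_i(1+cx_i)=\sum_k c^k e_k=\exp_\odot(c\,\til{m}_1)$, or equivalently the values $\psi(p_n)$ --- and the heaps/blow-up bookkeeping must be organized to handle all degrees at once. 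A possible shortcut worth trying is a purely Hopf-algebraic route: both $G\mapsto\psi(\ol{X}_G)$ and $G\mapsto\ol{X}_{\ol G}$ are Hopf algebra morphisms $\mWGraphs\to\ol{\til{\Lam}}$ (the first by composing $\psi$ with Marberg's morphism $G\mapsto\ol{X}_G$, the second by Proposition~\ref{prop:K_graph_maps}) that agree on the connected generators $P_n$; it would then remain only to show they agree on the other connected triangle-free generators, using the already-established CSF statement of Proposition~\ref{prop:unweighted_triangle_free_maps} for the lowest-degree behaviour together with a $K$-theoretic deletion–contraction identity to climb in degree, while being careful that contracting an edge of a triangle-free graph can create triangles.
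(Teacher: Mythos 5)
Your architecture matches the paper's: the Hopf-morphism property is obtained essentially for free from Proposition \ref{prop:K_complement_maps} applied to a triangle-free hereditary family (the paper uses stars and notes that paths work equally well), and the real content is showing that this one map also sends $\ol{X}_G$ to $\ol{X}_{\ol{G}}$ for every triangle-free $G$. Your observation that the map cannot be graded is also correct and consistent with the paper, which claims only a Hopf algebra morphism here and remarks in \S\ref{sec:future} that the KSF maps do not preserve the grading.

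The gap is that the combinatorial core --- expanding $\ol{X}_G$ in a power-sum-type pseudobasis and matching its small-part coefficients against the stable-set-cover coefficients of $\ol{X}_{\ol{G}}$ --- is deferred rather than carried out: you name two candidate routes but execute neither, and each has a real obstacle. For route (b), the clique blow-up $H^{\langle\mathbf{s}\rangle}$ of a triangle-free graph is not triangle-free (taking $s_u=2$ on one endpoint of an edge already creates a triangle), so Lemma \ref{lem:source_components} applied to the blow-ups produces source components of size at least $3$ whose contributions you would still have to control; the triangle-free hypothesis no longer kills them, and the needed cancellations are nontrivial. For route (a), the whole difficulty is choosing the right inhomogeneous pseudobasis so that the coefficients of $\ol{X}_G$ have a clean combinatorial meaning across all degrees at once. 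The paper resolves exactly this by working in the $\omega(\ol{p}'_\lam)$ pseudobasis, $\omega(\ol{p}'_n)=\prod_{i\ge1}(1-(-x_i)^n)-1$, and invoking the result (\cite{pierson2025power}, Theorem 1.5) that $[\omega(\ol{p}'_\lam)]\ol{X}_G$ counts covers of $G$ by distinct Lyndon heaps with sizes the parts of $\lam$. The map is then defined by $\omega(\ol{p}'_n)\mapsto\ol{\til{m}}_n$ for $n=1,2$ and $\omega(\ol{p}'_n)\mapsto0$ for $n\ge3$, and the triangle-free matching is that Lyndon heaps of size $1$ are vertices, Lyndon heaps of size $2$ are exactly the edges (each admitting a unique Lyndon orientation), so covers of $G$ by these are exactly the stable set covers of $\ol{G}$ with parts of size at most $2$. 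This is precisely the ``usable closed description'' of $\psi$ that you identify as necessary; without it, or an equivalent, the proposal does not yet establish $\psi(\ol{X}_G)=\ol{X}_{\ol{G}}$ for any triangle-free $G$ beyond disjoint unions of paths.
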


    The proof will use similar ideas to the proof of Proposition \ref{prop:unweighted_triangle_free_maps}, but we will need to introduce some more terminology. We defined \emph{\tb{\tcb{heaps}}} and \emph{\tb{\tcb{pyramids}}} in \S \ref{sec:p-expansion_lemma}. Given a fixed ordering on $V(G)$, a \emph{\tb{\tcb{Lyndon heap}}} (introduced by Lalonde \cite{lalonde1995lyndon} as an analogue of \emph{\tb{\tcb{Lyndon words}}}) is an aperiodic pyramid whose standard word is lexicographically minimal among the standard words of its rotations. To define what that means, we can assign to each heap an equivalence class of words where the alphabet is the set of vertices, such that the number of times each letter appears in the word equals the number of times the vertex appears in the heap, and such that $u$ must come before $v$ in the word whenever $uv$ is a directed edge in the heap. A heap is \emph{\tb{\tcb{aperiodic}}} if no word in its equivalence class is periodic. The \emph{\tb{\tcb{standard word}}} associated to a heap is the lexicographically maximal word in its equivalence class. The \emph{\tb{\tcb{rotation}}} operation on pyramids can be used to move any one of its vertices $v$ to the base to get a new pyramid. The process is to put $v$ and all vertices reachable from it at the end of the word, then move that portion of the word to the start, them repeat until the result is a pyramid with $v$ at the base (which Lalonde shows always happens eventually).

    The power expansion we will use here is in terms of the $\omega(\ol{p}'_\lam)$-basis, given by $$\omega(\ol{p}'_n) := \prod_{i\ge 1}(1-(-x_i)^n)-1, \hspace{1cm} \omega(\ol{p}'_\lam) := \omega(\ol{p}'_{\lam_1})\dots \omega(\ol{p}'_{\lam_\ell}),$$ which can alternatively be defined by applying the involution $\omega$ to the $\ol{p}'_\lam$ basis, defined by $$\ol{p}'_n := \prod_{i\ge 1}\frac1{1-x_i^n}-1,\hspace{1cm} \ol{p}'_\lam := \ol{p}'_{\lam_1}\dots\ol{p}'_{\lam_\ell}.$$ The power sum expansion we will use is:

    \begin{lemma}[\cite{pierson2025power}, Theorem 1.5]
        $[\omega(\ol{p}_n')]\ol{X}_G$ counts the number of ways to cover $G$ with distinct Lyndon heaps whose sizes are the parts of $\lam$, such that each vertex is used by at least one Lyndon heap.
    \end{lemma}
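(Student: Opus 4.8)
The plan is to define the desired morphism not on the primitives $p_n$ (as in the graded setting of Proposition \ref{prop:unweighted_triangle_free_maps}) but on the topological algebra generators $\omega(\ol{p}'_n)$ of the completed Hopf algebra $\ol{\Lam}$. The essential point, which distinguishes this from the CSF case, is that the map we want is \emph{not} graded: in the completion a Hopf morphism may send the degree-$n$ generator into infinitely many degrees. Concretely I would set
\[
\phi(\omega(\ol{p}'_n)) := \begin{cases}\ol{\til{m}}_n & \tn{for }n\in\{1,2\},\\ 0 & \tn{for }n\ge 3,\end{cases}
\]
and extend multiplicatively (sending the ordinary product on $\ol{\Lam}$ to the $\odot$-product) and continuously. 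Since the $\omega(\ol{p}'_n)$ are algebraically independent topological generators, this determines a continuous algebra morphism; continuity is fine because $\phi(\omega(\ol{p}'_\lam))$ has lowest degree $|\lam|$ whenever it is nonzero, so only finitely many $\lam$ contribute in each fixed degree.

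To check $\phi$ is a coalgebra morphism I would use the group-like structure. Because $1+\omega(\ol{p}'_n)=\prod_i(1-(-x_i)^n)$ and $1+\ol{\til{m}}_n=\prod_i(1+x_i^n)$ each factor over a disjoint union of alphabets, both are group-like, so $\Delta\,\omega(\ol{p}'_n)=\omega(\ol{p}'_n)\otimes 1+1\otimes\omega(\ol{p}'_n)+\omega(\ol{p}'_n)\otimes\omega(\ol{p}'_n)$ and similarly for $\ol{\til{m}}_n$ (the coproducts on $\ol{\Lam}$ and $\ol{\til{\Lam}}$ being the same). Hence $\Delta\circ\phi$ and $(\phi\otimes\phi)\circ\Delta$, both algebra morphisms, agree on every generator $\omega(\ol{p}'_n)$ (for $n\ge 3$ both sides are $0$), so they agree throughout; the unit and counit are clearly preserved. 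Thus $\phi\colon\ol{\Lam}\to\ol{\til{\Lam}}$ is a bona fide Hopf algebra morphism.

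For the identity, I would expand $\ol{X}_G$ in the $\omega(\ol{p}'_\lam)$ pseudobasis using the Lyndon-heap lemma, so $[\omega(\ol{p}'_\lam)]\ol{X}_G$ counts covers of $G$ by distinct Lyndon heaps whose sizes form $\lam$. Applying $\phi$ kills every $\lam$ with a part $\ge 3$. For triangle-free $G$ I would then classify the small Lyndon heaps: aperiodicity rules out a doubled single vertex, so the size-$1$ heaps are exactly the vertices, while a size-$2$ pyramid must live on two adjacent vertices and each edge contributes exactly one Lyndon heap (its rotation class has a unique lexicographically minimal representative). Thus the surviving terms are the $\lam=2^k1^j$, with $[\omega(\ol{p}'_{2^k1^j})]\ol{X}_G$ equal to the number of covers of $V(G)$ by $k$ distinct edges and $j$ distinct vertices, and $\phi(\omega(\ol{p}'_{2^k1^j}))=\ol{\til{m}}_2^{\odot k}\odot\ol{\til{m}}_1^{\odot j}=\ol{\til{m}}_{2^k1^j}$ since $\ol{\til{m}}_\lam=\ol{X}_{K^\lam}$ and KSFs multiply over joins. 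Comparing with the stable-set-cover expansion $\ol{X}_{\ol{G}}=\sum_{C\in\textsf{SSC}(\ol{G})}\ol{\til{m}}_{\lam(C)}$, and noting that the stable sets of $\ol{G}$ are the cliques of $G$, which for triangle-free $G$ are precisely its vertices and edges, the coefficient of $\ol{\til{m}}_{2^k1^j}$ in $\ol{X}_{\ol{G}}$ is again the number of such covers. The two counts coincide, giving $\phi(\ol{X}_G)=\ol{X}_{\ol{G}}$ for all unweighted triangle-free $G$.

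I expect the main obstacle to be conceptual rather than computational: recognizing that no graded map works. The naive graded choice $p_1\mapsto\til{m}_1,\ p_2\mapsto-\til{m}_2,\ p_{n\ge 3}\mapsto 0$ reproduces the CSF identity only in lowest degree and already fails on $\ol{X}_{K_2}$ in degree $3$, so one is forced into the completion and must build $\phi$ from the group-like elements $1+\omega(\ol{p}'_n)$. The remaining technical care is the Lyndon-heap classification in sizes $1$ and $2$ and verifying that the vertex/edge bijections respect distinctness and coverage, so that the Lyndon-cover count and the stable-set-cover count agree term by term.
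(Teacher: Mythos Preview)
The stated lemma is not proven in the paper at all—it is quoted from \cite{pierson2025power} (their Theorem~1.5) and used as a black box. Your proposal does not attempt to prove it either; what you have written is a proof of Proposition~\ref{prop:K_triangle_free_map}, the result that \emph{invokes} this lemma.

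Read as a proof of Proposition~\ref{prop:K_triangle_free_map}, your argument for the identity $\phi(\ol{X}_G)=\ol{X}_{\ol{G}}$ on triangle-free $G$ coincides with the paper's: same map $\omega(\ol{p}'_n)\mapsto\ol{\til{m}}_n$ for $n\le 2$ and $0$ otherwise, same appeal to the Lyndon-heap expansion, same classification of the size-$1$ and size-$2$ Lyndon heaps as vertices and edges, and the same comparison with the stable-set-cover expansion of $\ol{X}_{\ol{G}}$.

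Where you genuinely differ is in the verification that $\phi$ is a Hopf morphism. You argue directly from the group-like structure: $1+\omega(\ol{p}'_n)=\prod_i(1-(-x_i)^n)$ and $1+\ol{\til{m}}_n=\prod_i(1+x_i^n)$ are both group-like, so the generators obey identical coproduct formulas and $(\phi\otimes\phi)\circ\Delta=\Delta\circ\phi$ on generators, hence everywhere by multiplicativity and continuity. The paper takes a different route: it observes that the $n$-vertex star graphs form a family closed under connected induced subgraphs, applies Proposition~\ref{prop:K_complement_maps} to get a Hopf morphism $\ol{X}_{G_\lam}\mapsto\ol{X}_{\ol{G_\lam}}$ on the resulting Kromatic pseudobasis, and since stars are triangle-free the identity already established forces this map to agree with $\phi$. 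Your approach is more self-contained and exposes the underlying coalgebra structure; the paper's is shorter given the machinery of Proposition~\ref{prop:K_complement_maps} but leaves the identification of the two descriptions of the map for the reader to supply.
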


\begin{proof}[Proof of Proposition \ref{prop:K_triangle_free_map}]
    We claim that the desired map is $$\omega(\ol{p}_n') \mapsto \begin{cases}
        \ol{\til{m}}_n & n=1,2 \\
        0 & n\ge 3.
    \end{cases}$$
    Let $G$ be an unweighted triangle-free free graph, so we want to show that this map sends $\ol{X}_{G}$ to $\ol{X}_{\ol{G}}$, which we will prove by showing that it sends the $\omega(\ol{p}'_\lam)$ expansion of $\ol{X}_G$ to the $\ol{\til{m}}_\lam$ expansion of $\ol{X}_{\ol{G}}.$ 
    
    We know that $[\ol{\til{m}}_\lam]\ol{X}_{\ol{G}}$ counts the ways to cover $G$ with stable sets whose sizes are the parts of $\lam$. Since $G$ is triangle-free, $\ol{G}$ has no stable sets of size 3 or more, so the only $\ol{\til{m}}_\lam$ terms that show up in the SSC expansion of $\ol{X}_{\ol{G}}$ are ones where all parts of $\lam$ are 1's or 2's, and those terms count all ways to cover $\ol{G}$ with singleton vertices together with pairs of vertices that correspond to stable sets in $\ol{G}$, hence to edges in $G$ (where unlike in the CSF case, the stable sets are allowed to overlap).

    In the $\omega(\ol{p}'_\lam)$ expansion of $\ol{X}_G$, we only need to consider the $\omega(\ol{p}'_\lam)$ terms where all parts of $\lam$ are 1's or 2's, because all other terms will get sent to 0 by our map. Parts of $\lam$ of size 1 correspond to Lyndon heaps on 1 vertex, and there is always a unique way to make a single vertex into a Lyndon heap, so these are essentially just singleton vertices. Parts of $\lam$ of size 2 correspond to Lyndon heaps on 2 vertices. Such a heap cannot use two disconnected vertices since pyramids must be connected, and it cannot use the same vertex twice since then it would be periodic. Thus, it must come from an edge, and the only choice is to direct the first vertex of the edge (under the ordering on $V(G)$) towards the second vertex, since the other option would be the rotation of that one and would have a larger standard word, so it would not be the Lyndon heap in its equivalence class of rotations. Thus, Lyndon heaps of size 2 just correspond to edges. So, covering $G$ with Lyndon heaps of sizes 1 and 2 is the same thing as covering it with vertices and edges, so $[\omega(\ol{p}'_\lam)]\ol{X}_G = [\ol{\til{m}}_\lam]\ol{X}_{\ol{G}}$. Thus, our map sends $\ol{X}_G$ to $\ol{X}_{\ol{G}}$. 
    
    To see that this is a Hopf algebra morphism, we can apply Proposition \ref{prop:K_complement_maps} where $G_n$ is the $n$-vertex \emph{\tb{\tcb{star graph}}} (i.e. the graph with one central vertex connected to all other vertices), since all induced subgraphs of stars are disjoint unions of stars. (We could alternatively apply Proposition \ref{prop:K_complement_maps} and let $G_n$ be the $n$-vertex path, since all induced subgraphs of paths are disjoint unions of paths.)
\end{proof}

\section{Future directions}\label{sec:future}

One question is whether we can characterize more fully the families of weighted graphs from Proposition \ref{prop:hopf_complement_maps} with the property that all their connected induced subgraphs come from the same family. Relatedly, we can ask if more can be said about the possible graphs from Proposition \ref{prop:the_culmination} for given sets $C_1,C_2,\dots,C^*$. Another question is whether there are other interesting examples like the ones from \S\ref{sec:examples}, where there does or does not exist a Hopf algebra map sending the CSFs of specific graphs to the CSFs of their complements, or if there is a general characterization for those sorts of examples.

So far, the cochromatic bases are essentially the only interesting algebraically independent generating sets we know about for $\til{\Lam}$, so another question is whether there are other generating sets for $\til{\Lam}$ that are of combinatorial interest. Relatedly, all our $\Lam\to\til{\Lam}$ maps are either isomorphisms or non-surjective maps from $\Lam$ to $\til{\Lam}$, so one could ask if there are any interesting non-surjective maps going in the other direction, from $\til{\Lam}$ to $\Lam.$ One could also ask whether any of our maps send any of the other classic symmetric functions (such as Schur functions $s_\lam$ or the homogeneous symmetric functions $h_\lam$) to symmetric functions with an interesting combinatorial description.

Another direction would be to further investigate the Kromatic and coKromatic pseudobases from \S\ref{sec:K-analogues}. However, in that case the power sum coefficients count connected subsets where the same vertex can be used multiple times (Lyndon heaps), while the monomial coefficients count stable subsets where each vertex can only be used once, so it seems like it may be less easy in that context to map power sums to monomials in a way that simultaneously sends a large number of KSFs to their complements. The KSF maps also do not necessarily preserve the grading like the CSF maps do, which may make them more complicated to describe and analyze.

\section*{Acknowledgments}

We thank Jos\'{e} Aliste-Prieto, Logan Crew, Eric Marberg, Oliver Pechenik, and Sophie Spirkl for helpful discussions about this project.

\printbibliography

\end{document}